\begin{document}

\newtheorem{thm}{Theorem}[section]
\newtheorem{lem}[thm]{Lemma}
\newtheorem{prop}[thm]{Proposition}
\newtheorem{cor}[thm]{Corollary}
\newtheorem{defn}[thm]{Definition}
\newtheorem*{remark}{Remark}
\newtheorem{conj}[thm]{Conjecture}

\numberwithin{equation}{section}

\newcommand{\Z}{{\mathbb Z}} %cph changed from \mathbf
\newcommand{\Q}{{\mathbb Q}}
\newcommand{\R}{{\mathbb R}}
\newcommand{\C}{{\mathbb C}}
\newcommand{\N}{{\mathbb N}}
\newcommand{\FF}{{\mathbb F}}
\newcommand{\fq}{\mathbb{F}_q}
\newcommand{\rmk}[1]{\footnote{{\bf Comment:} #1}}

\renewcommand{\mod}{\;\operatorname{mod}}
\newcommand{\ord}{\operatorname{ord}}
\newcommand{\TT}{\mathbb{T}}
\renewcommand{\i}{{\mathrm{i}}}
\renewcommand{\d}{{\mathrm{d}}}
\renewcommand{\^}{\widehat}
\newcommand{\HH}{\mathbb H}
\newcommand{\Vol}{\operatorname{vol}}
\newcommand{\area}{\operatorname{area}}
\newcommand{\tr}{\operatorname{tr}}
\newcommand{\norm}{\mathcal N} % norm =(\frac{ n+\sqrt{n^2-4}} 2)^2
\newcommand{\intinf}{\int_{-\infty}^\infty}
\newcommand{\ave}[1]{\left\langle#1\right\rangle} %  average
\newcommand{\Var}{\operatorname{Var}}
\newcommand{\Prob}{\operatorname{Prob}}
\newcommand{\sym}{\operatorname{Sym}}
\newcommand{\disc}{\operatorname{disc}}
\newcommand{\CA}{{\mathcal C}_A}
\newcommand{\cond}{\operatorname{cond}} % conductor
\newcommand{\lcm}{\operatorname{lcm}}
\newcommand{\Kl}{\operatorname{Kl}} %Kloosterman sum
\newcommand{\leg}[2]{\left( \frac{#1}{#2} \right)}  % Legendre symbol
\newcommand{\Li}{\operatorname{Li}}

\newcommand{\sumstar}{\sideset \and^{*} \to \sum}

\newcommand{\LL}{\mathcal L} %L-function of u
\newcommand{\sumf}{\sum^\flat}
\newcommand{\Hgev}{\mathcal H_{2g+2,q}}
\newcommand{\USp}{\operatorname{USp}}
\newcommand{\conv}{*}
\newcommand{\dist} {\operatorname{dist}}
\newcommand{\CF}{c_0} % Fejer constant
\newcommand{\kerp}{\mathcal K}

\newcommand{\Cov}{\operatorname{cov}}
\newcommand{\Sym}{\operatorname{Sym}}

\newcommand{\ES}{\mathcal S} % sums over AP
\newcommand{\EN}{\mathcal N} % sum over short intervals
\newcommand{\EM}{\mathcal M} % sum over pols of deg n
\newcommand{\Sc}{\operatorname{Sc}} %Secular coefficients
\newcommand{\Ht}{\operatorname{Ht}}

\newcommand{\E}{\operatorname{E}} % expectation
\newcommand{\sign}{\operatorname{sign}} %sign

\newcommand{\divid}{d} % the divisor function
\newcommand{\inv}{\theta}

\newcommand{\Res}{\operatornamewithlimits{Res}} %residue

\title[Squarefree polynomials and M\"obius values]
{Squarefree polynomials and M\"obius values in short intervals and arithmetic progressions  }

\author{J.P. Keating  and Z. Rudnick}

\address{School of Mathematics, University of Bristol, Bristol BS8 1TW, UK}
\email{j.p.keating@bristol.ac.uk}

\address{Raymond and Beverly Sackler School of Mathematical Sciences,
Tel Aviv University, Tel Aviv 69978, Israel}
\email{rudnick@post.tau.ac.il}
\date{\today}
\thanks{ JPK gratefully acknowledges support under EPSRC Programme Grant EP/K034383/1
LMF: L-Functions and Modular Forms, a grant from Leverhulme Trust, a Royal Society Wolfson Merit Award, a Royal Society Leverhulme Senior Research Fellowship, and by the Air Force Office of Scientific Research, Air Force Material Command, USAF, under grant number FA8655-10-1-3088.  ZR is similarly grateful for support from the Friends of the Institute for Advanced Study, 
the European Research Council under the European Union's Seventh
Framework Programme (FP7/2007-2013) / ERC grant agreement
n$^{\text{o}}$ 320755, and from the Israel Science Foundation
(grant No. 925/14). }

\subjclass[2010]{Primary 11T55; Secondary   11M38, 11M50}

\begin{abstract}
We calculate the mean and variance of sums of the M\"obius function $\mu$ and the
indicator function of the squarefrees $\mu^2$, in both short intervals and arithmetic progressions, in the context of
the ring $\fq[t]$ of polynomials over a finite field $\fq$ of $q$
elements, in the limit $q\to \infty$.  We do this by relating the sums in question to certain matrix integrals over the unitary group, using recent equidistribution results due to N. Katz, and then by evaluating these integrals.  In many cases our results mirror what is either known or conjectured for the corresponding problems involving sums over the integers, which have a long history.  In some cases there are subtle and surprising differences.  The ranges over which our results hold is significantly greater than those established for the corresponding problems in the number field setting.
\end{abstract}
\maketitle

\tableofcontents

\section{Introduction}

% A fundamental feature of integer
%arithmetic is that the prime factorization of an integer does not
%depend continuously on its location. This feature manifests itself
%in wild oscillations of the values of various naturally defined
%arithmetic functions. Nonetheless it is expected that when averaged
%over intervals which are not too short, several arithmetic functions
%have smoother behaviour and a dominant theme in number theory has
%been to try to understand the statistics of such averages over
%"short" intervals and over arithmetic progressions. 
% \marginpar{This needs to be rewritten ! Emphasize clearly the insights gained for ordinary number theory}
The goal of this paper is to investigate the
fluctuation of sums  of two important arithmetic
functions, the M\"obius function $\mu$ and the
indicator function of the squarefrees $\mu^2$, in the context of the
the ring $\fq[t]$ of polynomials over a finite field $\fq$ of $q$
elements, in the limit $q\to \infty$. 
The problems we address, which concern sums over short intervals and arithmetic progressions, mirror long-standing questions over the integers,
where they are largely unknown. In our setting we succeed in
giving definitive answers.  

Our approach differs from those traditionally employed in the number field setting: we use recent equidistribution results due to N. Katz, valid in the large-$q$ limit, to express the mean and variance of the fluctuations in terms of matrix integrals over the unitary group.  Evaluating these integrals leads to explicit formulae and precise ranges of validity.  For many of the problems we study, the formulae we obtain match the corresponding number-field results and conjectures exactly, providing further support in the latter case.  However, the ranges of validity that we can establish are significantly greater than those known or previously conjectured for the integers, and we see our results as supporting extensions to much wider ranges of validity in the integer setting.  Interestingly, in some other problems we uncover subtle and surprising differences between the function-field and number-field asymptotics, which we examine in detail. 

We now set out our main results in a way that enables comparison with the corresponding problems for the integers.

%\section{Examples} \marginpar{change}
%We present several examples of arithmetic functions and the results
%for the variance. All of these problems have a long history in the
%number field case, described separately below.

\subsection{The M\"obius function}

%\subsubsection{Sums of the M\"obius function in short intervals}

It is a standard heuristic to assume that the M\"obius function
behaves like a random $\pm 1$ supported on the squarefree integers,
which have density $1/\zeta(2)$ (see e.g. \cite{CS}). Proving anything in this direction
is not easy. Even demonstrating cancellation in the sum
$M(x):=\sum_{1\leq n\leq x}\mu(n)$, that is that $M(x)=o(x)$, is
equivalent to the Prime Number Theorem. The Riemann Hypothesis is
equivalent to square-root cancellation: $M(x) =
O(x^{1/2+o(1)})$.

For sums of $\mu(n)$ in blocks of length $H$,
\begin{equation}
M(x;H):=\sum_{|n-x|<H/2} \mu(n)
\end{equation}
%where  $X^\epsilon<H=H(X)<X^{1-\epsilon}$, 
it has been shown that
there is cancellation for $H\gg x^{7/12+o(1)}$ \cite{Motohashi
Mobius, Ramachandra}, and assuming the Riemann Hypothesis one can
take $H\gg x^{1/2+o(1)}$.   If one wants cancellation only for ``almost all" values of $x$, then more is known. In particular, very recently M\"atomaki and Radziwi\l{}\l{}  \cite{MR} have shown (unconditionally) that 
$$\frac 1X\int_X^{2X} M(x;H)^2dx = o(H^2)$$
whenever $H=H(X)\to \infty$ as $X\to\infty$, and in particular $M(x;H) = o(H)$ for almost all $x\in [X,2X]$. 
 
We expect the normalized sums $M(x;H)/\sqrt{H}$ to have mean zero
(this follows from the Riemann Hypothesis) and variance
$6/\pi^2=1/\zeta(2)$: 
\begin{equation}\label{GC conj}
\frac 1X\int_X^{2X} |M(x;H)|^2 \sim \frac H{\zeta(2)}.
\end{equation}
Moreover,  $M(x;H)/\sqrt{H/\zeta(2)}$ is believed to have
a normal distribution asymptotically.
These conjectures were formulated and investigated
numerically by Good and Churchhouse \cite{GC} in 1968, and further
studied by Ng  \cite{Ng},
%(see http://www.cs.uleth.ca/~nathanng/RESEARCH/mobiusshort.pdf)
 who carried out an analysis using the Generalized Riemann Hypothesis (GRH)
 and a strong version of Chowla's conjecture
 on correlations of M\"obius, showing that \eqref{GC conj} is valid
for $H\ll X^{1/4-o(1)}$ and that Gaussian distribution holds
 (assuming these conjectures) for $H\ll X^\epsilon$. It is important
 that the length $H$ of the interval be significantly smaller than
 its location, that is $H<X^{1-\epsilon}$, since otherwise one
 expects non-Gaussian statistics, see \cite{Ng2004}.

%\subsubsection{The M\"obius function in arithmetic progressions}

 Concerning arithmetic
progressions, Hooley \cite{Hooley BDHIII} studied the following
averaged form of the total variance (averaged over moduli)
\begin{equation}
V(X,Q):=\sum_{Q'\leq Q}\sum_{A\bmod Q'} \Big( \sum_{\substack{n\leq
X\\ n=A\bmod Q'}} \mu(n)\Big)^2
\end{equation}
and showed that for $Q\leq X$,
\begin{equation}
V(X,Q) = \frac{6QX}{\pi^2} +O(X^2(\log X)^{-C})
\end{equation}
for all $C>0$, which yields  an asymptotic result for $X/(\log X)^C
\ll Q<X$.

For polynomials over a finite field $\fq$, the M\"obius function is
defined as for the integers, namely by $\mu(f)=(-1)^k$ if
$f$ is a scalar multiple of a product of $k$ distinct monic
irreducibles, and $\mu(f)=0$ if $f$ is not squarefree. The analogue
of the full sum $M(x)$ is the sum over all monic polynomials $\EM_n$
of given degree $n$, for which we have
\begin{equation}
%\EN(n):=
\sum_{f\in \EM_n}\mu(f) =
\begin{cases} 1,&n=0\\ -q,&n=1\\0,&n\geq 2
\end{cases}
\end{equation}
so that in particular the issue of size is trivial\footnote{This
ceases to be the case when dealing with function fields of higher
genus, see e.g. \cite{Cha, Humphries}}. However that is no longer
the case when considering sums over ``short intervals", that is over
sets of the form
\begin{equation}
I(A;h) = \{f:||f-A||\leq q^h\}
\end{equation}
where  $A\in \EM_n$ has degree $n$, $0\leq h\leq n-2$
and\footnote{For $h=n-1$, $I(A;n-1) = \EM_n$ is the set of all monic
polynomials of degree $n$} the norm is
\begin{equation}
||f||:=\#\fq[t]/(f) = q^{\deg f}\;.
\end{equation}
To facilitate comparison between statements for number field results
and for function fields, we use a rough dictionary
\begin{equation}\label{dictionary}
\begin{split}
X\leftrightarrow q^n , & \quad\log X\leftrightarrow n\\
H\leftrightarrow q^{h+1}, & \quad \log H\leftrightarrow h+1
\end{split}
\end{equation}

  Set
\begin{equation}
\EN_\mu(A;h):= \sum_ {f\in I(A;h)} \mu(f)\;.
\end{equation}
The number of summands here is $q^{h+1}=:H$ and we want to display
cancellation in this sum and study its statistics as we vary the
``center"  $A$ of the interval.

We can demonstrate cancellation in the short interval sums
$\EN_\mu(A;h)$ in the large finite field limit $q\to \infty$, $n$
fixed (we assume $q$ is odd throughout the paper):
\begin{thm}\label{prop asymp mobius short}
If   $2\leq h\leq n-2$  then for all $A$ of degree $n$,
\begin{equation*}
\Big| \EN_\mu(A;h) \Big| \ll_{n} \frac H{\sqrt{q}}
\end{equation*}
the implied constant uniform in $A$, depending only on $n= \deg A$.
\end{thm}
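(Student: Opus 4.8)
The plan is to express the short-interval sum $\EN_\mu(A;h)$ as a sum of a character-like quantity over the finite field and then to identify it with a trace of Frobenius on an appropriate $\ell$-adic sheaf, so that Katz's equidistribution result gives control in the $q\to\infty$ limit. First I would recall the standard device (going back to Keating--Rudnick and others) that a short interval $I(A;h)$ with $A$ of degree $n$ is an additive coset of a subspace, and that summing a multiplicative function over such a coset can be repackaged: one writes $f = A + g$ with $\deg g \le h$, and after an inversion $f \mapsto t^n f(1/t)$ one converts the short-interval sum into a sum over an arithmetic-progression-like family, or directly into a sum $\sum_{\deg g \le h} \mu(\cdot)$ twisted by the relevant data. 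The upshot I expect is a clean formula of the shape
\begin{equation*}
\EN_\mu(A;h) = \text{(main term, here $0$ for $n\ge 2$)} \; + \; q^{(h+1)/2}\cdot(\text{sum of $\le C_n$ inverse roots of unit modulus}),
\end{equation*}
i.e. $\EN_\mu(A;h)$ equals $q^{h/2}$ or $q^{(h+1)/2}$ times a trace $\tr \Theta_A$ of a unitarized conjugacy class $\Theta_A$ in some unitary group $\USp$ or $U(N)$ with $N = N(n)$ bounded in terms of $n$ only.

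The key analytic input is the Riemann Hypothesis for the relevant $L$-functions (Weil / Deligne): the M\"obius sum over a short interval is, up to elementary factors, a coefficient of an $L$-function $L(u) = \prod_{j=1}^{N}(1-\alpha_j u)$ attached to the family, all $|\alpha_j| = \sqrt q$, with $N \le N(n)$. Then $\EN_\mu(A;h)$ is a fixed polynomial expression in the $\alpha_j$ of total degree $h+1$ in $u$-weight, hence trivially bounded by $\binom{N}{\le h+1} q^{(h+1)/2} \ll_n q^{(h+1)/2} = H/\sqrt{q}\cdot\sqrt{q}\cdot q^{-1/2}$; more carefully, tracking the $\mu$-twist one sees the relevant coefficient sits at $u$-degree $h+1$ but carries a saving, yielding $\ll_n q^{(h+1)/2} = \sqrt{H}\cdot\sqrt{q}/\sqrt q \cdot \dots$ — the precise bookkeeping must land on $H/\sqrt q = q^{h+1}/q^{1/2} = q^{h+1/2}$, so in fact I expect the bound to come from the observation that $\EN_\mu$ is $q^{(h+1)/2}$ times a trace, combined with the \emph{extra} factor $\sqrt q$ coming from the weight of $\mu$ itself (each irreducible factor contributes a $-1$ weighted by $q$-powers in the generating identity $\sum_f \mu(f) u^{\deg f} = 1 - qu$-type relations restricted to the interval). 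Concretely: writing out $\sum_{f\in I(A;h)}\mu(f)$ via the identity $\mu \ast 1 = \delta$ localized to the coset produces the bound with the claimed $H/\sqrt q$ once RH is applied to each of the $\ll_n$ constituent $L$-functions.

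The main obstacle is the passage from "the sum is a coefficient of an $L$-function" to the \emph{uniform in $A$} bound with the correct power of $q$. Two things need care: (i) establishing that the number of inverse roots $N$, equivalently the conductor of the sheaf computing $\EN_\mu(A;h)$, is bounded purely in terms of $n = \deg A$ and does not degenerate as $A$ varies — this is where I would invoke the construction and cohomological bounds from Katz's work cited in the excerpt, which give exactly such uniformity; and (ii) getting the \emph{extra} $1/\sqrt q$ beyond the naive square-root cancellation $\ll_n \sqrt H$. That extra saving is the heart of the theorem and should come from the fact that the leading cohomology group in the relevant complex vanishes for $h\le n-2$ (this is why the hypothesis is $h \le n-2$ rather than $h \le n-1$), so the Frobenius trace lives in one cohomological degree lower than expected, improving $q^{(h+1)/2}$ to... no: it improves the \emph{count of terms} or shifts the weight, delivering $q^{h/2}\cdot q^{1/2}/q = $ — in any case, the mechanism is a vanishing/degree-shift statement for the cohomology, and verifying it (using $h \le n-2$ and $h\ge 2$, the latter presumably to avoid small-degree anomalies) is the step I expect to be delicate. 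I would structure the write-up as: (1) the combinatorial reduction of $\EN_\mu(A;h)$ to an $L$-function coefficient; (2) citation of Katz for conductor bounds and RH; (3) the cohomological vanishing forcing the extra saving; (4) assembling the uniform bound.
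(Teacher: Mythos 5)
Your proposal is built on a misreading of what the theorem claims and on an L-function reduction that does not exist for an individual interval. First, note that $H/\sqrt q=q^{h+1/2}$ is much \emph{larger} than $\sqrt H=q^{(h+1)/2}$: the theorem asserts only a saving of a single factor $\sqrt q$ over the trivial bound $H$, not ``square-root cancellation plus an extra $1/\sqrt q$''. So the ``heart of the theorem'' you locate --- a cohomological vanishing producing a saving beyond $\sqrt H$ --- is a phantom, and indeed your own bookkeeping never closes (you break off mid-computation). Second, there is no single L-function of degree $O_n(1)$ whose coefficient is $\EN_\mu(A;h)$ for a fixed $A$. The short-interval-to-Dirichlet-character reduction used in the paper expresses $\EN_\mu(A;h)$ as an average over the $\asymp q^{n-h-1}$ even characters mod $t^{n-h}$; bounding each term by RH for the individual L-functions gives only $O_n(q^{n/2})$, which is \emph{weaker} than $H/\sqrt q$ exactly in the short-interval regime $h<(n-1)/2$. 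Katz's equidistribution cannot rescue this: it controls the family on average (and is what the paper uses for the \emph{variance}), not a single $A$ uniformly. Finally, your treatment of the hypothesis $h\ge 2$ as ``presumably to avoid small-degree anomalies'' misses that the statement is genuinely false for $h=0,1$ (the paper exhibits $A=t^n$ with $2p\mid n$ where $|\EN_\mu(t^n;1)|\gg H$), so any correct argument must use $h\ge2$ in an essential way, which yours does not.

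The paper's actual proof is elementary and one-variable. By Pellet's formula, $\mu(f)=(-1)^{\deg f}\chi_2(\disc f)$ with $\chi_2$ the quadratic character of $\fq$. Writing the interval elements as $A(t)+a_ht^h+\dots+a_1t+b$ and fixing $\vec a=(a_1,\dots,a_h)$, the inner sum over the constant term $b\in\fq$ becomes $\sum_{b}\chi_2\bigl(D_{\vec a}(b)\bigr)$, where $D_{\vec a}(b)=\disc\bigl(A+a_ht^h+\dots+a_1t+b\bigr)$ is a polynomial in $b$ of degree at most $n-1$. The Carmon--Rudnick argument shows that for all but $O(q^{h-1})$ choices of $\vec a$ this polynomial is non-constant and squarefree, whence Weil's theorem (RH for curves, not Deligne/Katz) bounds the inner sum by $(n-2)\sqrt q$; the exceptional $\vec a$ contribute trivially $q$ each. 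Summing, $|\EN_\mu(A;h)|\ll_n q^h\sqrt q+q^{h-1}\cdot q\ll_n H/\sqrt q$, uniformly in $A$. (The paper also sketches a second route via the Bank--Bary-Soroker--Rosenzweig equidistribution of cycle types together with $\mu(f)=(-1)^n\sign(\sigma_f)$ and $\sum_{\sigma\in S_n}\sign(\sigma)=0$.) Your write-up would need to be restructured around such a slicing-plus-Weil argument, or around the cycle-type equidistribution, rather than around an L-function attached to the interval.
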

For $h=0,1$ this is no longer valid, that is there are $A$'s for
which there is no cancellation, see \S~\ref{sec:asymp}.

We next investigate the statistics of $\EN_\mu(A;h)$ as $A$ varies
over all monic polynomials of given degree $n$, and $q\to \infty$.
 It is easy to see that for $n\geq 2$, the mean value of $\EN_\mu(A;h)$ is
 $0$.
 Our main result concerns
the variance:   %(again $H= q^{h+1}$)
\begin{thm}\label{thm mobius}
If  $0\leq h\leq n-5$ then as $q\to \infty$, $q$ odd,
\begin{equation*}
\Var\EN_\mu(\bullet;h)= \frac 1{q^n}\sum_{A\in \EM_n}
|\EN_\mu(A;h)|^2 \sim H\int_{U(n-h-2)}| \tr \Sym^n U|^2 dU = H
\end{equation*}
\end{thm}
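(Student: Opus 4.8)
The plan is to express $\EN_\mu(A;h)$ as a character sum twisted by short-interval characters, convert the variance into an average over a family of Dirichlet characters (or equivalently a family of Artin–Schreier-type sheaves), and then invoke Katz's equidistribution theorem to replace the average by a matrix integral over a unitary group, which is then evaluated. Concretely, one first recalls (cf.\ the dictionary \eqref{dictionary} and the setup in \S\ref{sec:asymp}) that membership in a short interval $I(A;h)$ can be detected by ``short-interval characters'': additive-type characters on $\fq[t]$ that are trivial on $I(0;h)$, so that the indicator of $f\in I(A;h)$ is a normalized sum over such characters. Plugging this into $\EN_\mu(A;h)=\sum_{f\in I(A;h)}\mu(f)$ and using the known Dirichlet-series identity $\sum_{f}\mu(f)\psi(f)=1/L(\psi,\cdot)$ — which for these characters is a polynomial in $q^{-s}$ of degree $n-h-2$ by the Riemann Hypothesis for curves — expresses $\EN_\mu(A;h)$, up to lower-order terms, as a sum of ``traces'' $\tr\Sym^n\Theta_\psi$, where $\Theta_\psi\in U(n-h-2)$ is the unitarized Frobenius conjugacy class attached to $\psi$. (This is exactly the mechanism that produces the matrix integral in the statement.)

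Next I would carry out the variance computation: $\Var\EN_\mu(\bullet;h)=\frac1{q^n}\sum_A|\EN_\mu(A;h)|^2$ becomes, after opening the square and summing over $A$, an average over the relevant family of characters $\psi$ of $|\tr\Sym^n\Theta_\psi|^2$, plus diagonal and error contributions coming from the $n=0,1$ low-degree terms in the Möbius sum and from imperfect orthogonality of the short-interval characters. The range $h\le n-5$ is presumably what is needed to ensure that these secondary terms are genuinely negligible in the limit $q\to\infty$ and that the symmetric power $\Sym^n$ of an $(n-h-2)$-dimensional representation is in the stable range where the relevant equidistribution (monodromy being the full unitary group $U(n-h-2)$) applies.

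The main analytic input is then Katz's equidistribution result: as $q\to\infty$, the conjugacy classes $\Theta_\psi$ become equidistributed in $U(n-h-2)$ with respect to Haar measure, so
\[
\frac1{q^n}\sum_A|\EN_\mu(A;h)|^2 \;\longrightarrow\; H\int_{U(n-h-2)}|\tr\Sym^n U|^2\,dU.
\]
Finally one evaluates this integral. By the standard representation-theoretic fact that $\Sym^n$ of the standard representation of $U(N)$ is irreducible for every $N\ge 1$ (its character is the Schur function $s_{(n)}$, and distinct partitions give orthonormal irreducible characters), one gets $\int_{U(N)}|\tr\Sym^n U|^2\,dU=1$ for all $N\ge 1$, hence the answer $H$. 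I expect the main obstacle to be the bookkeeping in the second step: correctly identifying the family of $L$-functions, verifying that its geometric monodromy group is all of $U(n-h-2)$ (so that Katz's theorem applies in the form needed), and controlling the error terms well enough to pin down the precise range $0\le h\le n-5$ — the representation-theoretic evaluation of the integral itself is routine.
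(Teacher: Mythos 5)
Your proposal is correct and follows essentially the same route as the paper: the ``short-interval characters'' you describe are exactly the even Dirichlet characters modulo $t^{n-h}$ pulled back through the paper's reversal involution $f\mapsto t^nf(1/t)$, the identity $\sum_f\mu(f)\chi(f)u^{\deg f}=1/L(u,\chi)$ yields $\EM(n;\mu\chi)-\EM(n-1;\mu\chi)=q^{n/2}\tr\Sym^n\Theta_\chi$ for primitive even $\chi$, and Katz's equidistribution in $PU(n-h-2)$ (requiring $n-h-1\ge 4$, i.e.\ $h\le n-5$) plus irreducibility of $\Sym^n$ gives the matrix integral equal to $1$. The only slips are cosmetic: the degree $n-h-2$ of the relevant L-factor comes from the conductor and the trivial zero at $u=1$, not from the Riemann Hypothesis, which instead fixes the absolute values of the inverse roots.
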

 This is consistent with the Good-Churchhouse conjecture \eqref{GC conj} if we
write it as $H/\zeta_q(2)$, where
$$\zeta_q(s) = \sum_{f \, monic} \frac 1{||f||^s},\quad {\rm Re}(s)>1,$$
which tends to $1$ as $q\to \infty$,  and $H=q^{h+1}$ is the number
of monic polynomials in the short interval.

A version of Theorem~\ref{thm mobius} valid for $h<n/2$ (``very short" intervals) has recently been obtained by Bae, Cha and Jung \cite{BCJ} using the method of our earlier paper \cite{KR}.

Analogous results can be obtained for sums over arithmetic
progressions, see \S~\ref{Sec:mob AP}.

% A problem which we have not been able to approach is the behaviour
% of higher moments and the limiting value distribution of
% $\EN_\mu(A;h)$, which we believe to be Gaussian when $q\to \infty$.
% marginpar{keep this?} 

\subsection{Squarefrees}

%\section{squarefrees in short intervals}

%\subsection{Over the integers}

It is well known that the density of squarefree integers is
$1/\zeta(2) = 6/\pi^2$, and an elementary sieve shows
\begin{equation}\label{elt Q}
Q(x) :=\#\{n\leq x: n \mbox{ squarefree}\} = \frac{x}{\zeta(2)}  +
O(x^{1/2}) \;.
\end{equation}
No better exponent is known for the remainder term.
%(and this is related to the Riemann Hypothesis).
Using zero-free regions for
$\zeta(s)$, Walfisz gave a remainder term of the form
$x^{1/2}\exp(-c(\log x)^{3/5+o(1)})$.
 Assuming RH, the exponent $1/2$ has been improved \cite{Axer, MV,  BP}, currently to
 $17/54=0.31$ \cite{Jia}.
It is expected that
 \begin{equation}
   Q(x) = \frac{x}{\zeta(2)} + O(x^{1/4+o(1)}) \;.
 \end{equation}

Since the density is known, we wish to understand to what extent we
can guarantee the existence of squarefrees in short intervals
$(x,x+H]$; moreover, when do we still expect to have an asymptotic
formula for the number
\begin{equation}
Q(x,H):=%\#\{N-\frac 12 H<n\leq n+\frac 12 H: n \mbox{ squarefree}\}=
\sum_{|n-x|\leq  \frac H2} \mu^2(n) =Q(x+H)-Q(x)
\end{equation}
 of squarefrees in the interval $(x,x+H]$; that is when do we still
have
\begin{equation}\label{asym H}
Q(x;H) \sim \frac{H}{\zeta(2)} \;.
\end{equation}
In view of the bound of $O(x^{1/2})$ for the remainder term  in
\eqref{elt Q}, this holds for $H\gg x^{1/2+o(1)}$. However, one can
do better without improving on the remainder term in \eqref{elt Q}.
This was first done by Roth \cite{Roth} who by an elementary method
showed that the asymptotic \eqref{asym H} persists for $H\gg
x^{1/3+o(1)}$. Following improvements by Roth himself (exponent
$3/13$) and  Richert \cite{Richert} in 1954 (exponenent $2/9$), the
current best bound is by  Tolev \cite{Tolev} (2006) (building on
earlier work by Filaseta and Trifonov) who gave $H\gg x^{1/5+o(1)}$.
It is believed that \eqref{asym H} should hold for $H\gg
x^{\epsilon}$ for any $\epsilon>0$, though there are intervals of
size $H\gg \log x/\log\log x$ which contain no squarefrees, see
\cite{Erdos 1951}.

% There have  been several studies on counting squarefree integers in short
%intervals. If we set
%\begin{equation}
%Q(N,H):=%\#\{N-\frac 12 H<n\leq n+\frac 12 H: n \mbox{ squarefree}\}=
%\sum_{|n-N|\leq  \frac H2} \mu^2(n)\;,
%\end{equation}
%then one wants  to get an asymptotic formula of the form
%\begin{equation}
%Q(N,H)\sim \frac 1{\zeta(2)} H
%\end{equation}
% for $H=o(N)$, either individually or for almost all such intervals.
%Tolev \cite{Tolev} (building on earlier work by Filaseta and
%Trifonov) showed that an asymptotic formula holds individually
%provided $H/N^{1/5} \log N \to \infty$ (for earlier work, see Roth
%\cite{Roth},  Richert \cite{Richert}, and Filaseta \cite{Filaseta}
%among others).

As for almost-everywhere results, one way to proceed goes through a
study of the variance of $Q(x,H)$. In this direction, Hall
\cite{Hall} showed that provided  $H=O(x^{2/9-o(1)})$, the variance
of $Q(x,H)$ admits an asymptotic formula:
\begin{equation}\label{eq:Hall thm}
\frac 1x \sum_{n\leq x} \left |Q(n,H) - \frac H{\zeta(2)}\right |^2
\sim A\sqrt{H}\;,
\end{equation}
with
\begin{equation}\label{Hall constant int}
A = \frac{\zeta(3/2)}\pi \prod_p(\frac{p^3-3p+2 }{p^3})\;.
\end{equation}
Based on our results below, we expect this asymptotic formula to
hold for $H$ as large as $x^{1-\epsilon}$.

%\section{squarefrees  in arithmetic progressions}

%\subsection{Over the integers}

Concerning arithmetic progressions, denote by
$$S(x;Q,A) =\sum_{\substack{n\leq x\\n=A\bmod Q}}\mu(n)^2$$
the number of squarefree integers in the arithmetic progression
$n=A\bmod Q$.
%Then for fixed $Q$,
%Prachar \cite{Prachar} showed
%\begin{equation}
%  M(x;Q,A) \sim f(Q;A) x
%\end{equation}
%for certain constants $f(Q,A)$;
%at this point it suffices to say
%that $f(Q,A)\neq 0$ if and only if $\gcd(A,Q)$ is squarefree.
%if $(\gcd(A,Q)=1$ then $f(Q;A) = \frac 1{\zeta(2)} \frac 1Q \prod_{p\mid Q}(1-\frac 1{p^2})^{-1}$.
Prachar \cite{Prachar}  showed that for $Q<x^{2/3-\epsilon}$, and
$A$ coprime to $Q$,
%gave a remainder term (when $\gcd(A,Q)=1$?)
%\marginpar{there is  no apriori reason to assume (A,Q)=1 here}
\begin{equation}
   S(x;Q,A)\sim \frac 1{\zeta(2)} \prod_{p\mid Q} (1-\frac 1{p^2})^{-1} \frac
   xQ =\frac 1{\zeta(2)} \prod_{p\mid Q} (1+\frac 1{p})^{-1} \frac
   x{\phi(Q)}
%+ O\left( (\frac xQ)^{1/2} + Q^{1/2+o(1)}   \right)
\end{equation}

In order to understand the size of the remainder term,
%in arithmetic progressions,
one studies  the  variance
\begin{equation}
  \Var(S ) = \frac 1{\phi(Q)} \sum_{\gcd(A,Q)=1} \left| S(x;Q,A)-  \frac 1{\zeta(2)} \prod_{p\mid Q} (1-\frac 1{p^2})^{-1} \frac xQ  \right|^2
\end{equation}
as well as a version   where the sum is over all residue classes
$A\bmod Q$, not necessarily coprime to $Q$, and the further averaged
form over all moduli $Q'\leq Q$ a-la Barban, Davenport \& Halberstam,
see  \cite{Warlimont1980}.
%\begin{equation}
%  V'(x;Q) = \sum_{Q\,' \leq Q} G'(x,Q\,')
%\end{equation}
%Following prior work on the subject%\cite{Orr, Croft}
%, Warlimont \cite{Warlimont1980} showed that for $1\leq Q\leq x$,
%\begin{equation}
%  V'(x;Q) = c_2x^{1/2}Q^{3/2} + O(x^{1/4}Q^{7/4}) + O(x^{3/2+o(1)})
%\end{equation}
%which gives an asymptotic formula provided $Q>x^{2/3+o(1)}$. See
%also Vaughan's paper  \cite{Vaughan} of 2005.

Without averaging over moduli, Blomer \cite[Theorem 1.3]{Blomer}
gave an upper bound for the variance,
%showed
%\begin{equation}
%G( x,Q) \ll x^\epsilon \left\{ x+ \min(\frac{x^{5/3}}Q+ Q^2))
%\right\}
%\end{equation}
%Note: Blomer summed only over residue classes co-prime to $Q$.
which was very recently improved by Nunes \cite{Nunes}, who gave an
asymptotic for the variance in the range $X^{\frac
{31}{41}+\epsilon}<Q<X^{1-\epsilon}$
\begin{equation}\label{Nunes}
  \Var(S )
%\frac 1{\phi(Q)} \sum_{\gcd(A,Q)=1}  \left| M(x;Q,A)-f(Q;A)x \right|^2
\sim A \prod_{p\mid Q}(1-\frac 1p)^{-1}(1+\frac 2p)^{-1} \cdot
\frac{X^{1/2}}{Q^{1/2}}
\end{equation}
where $A$ is given by \eqref{Hall constant int}.
%$$C =\frac{\zeta(3/2)}{\pi \zeta(2)} \prod_p (1+\frac 2{p^2+p})$$
It is apparently not known in what range of $Q$ to expect
\eqref{Nunes} to hold. Based on our results below, we conjecture
that \eqref{Nunes} holds down to $X^\epsilon<Q$. \label{check or
move}

%\subsection{In $\fq[t]$}
Our goal here is to study analogous problems for $\fq[t]$. The total
number of squarefree monic polynomials of degree $n>1$ is (exactly)
\begin{equation}
\sum_{f\in \mathcal M_n} \mu(f)^2 = \frac {q^n}{\zeta_q(2)}\;.
\end{equation}
The number of squarefree polynomials in the short interval $I(A;h)$
is
\begin{equation}
\EN_{\mu^2}(A;h)= \sum_{f\in I(A;h)} \mu(f)^2 \;.
\end{equation}
%\begin{verbatim}
%we impose the condition f(0)\neq 0 which is unnatural here so will
%need to dispose of it later.
%\end{verbatim}

\subsubsection{Asymptotics}
We show that for any short interval/arithmetic progression, we still
have an asymptotic count of the number of squarefrees:
\begin{thm}\label{thm asymp sf}
i) If   $\deg Q<n$ and $\gcd(A,Q)=1$ then
$$ \#\{f\in \EM_n: f=A\bmod Q\,f \; {\rm squarefree}\} = \frac{q^n}{|Q|}\left( 1+O_n(\frac 1q) \right)\;.
%\sim \frac{q^n}{\Phi(Q)}
$$

ii) If $0<h\leq n-2$ then for all $A\in \EM_n$,
$$\#\{f\in I(A;h): f \; {\rm squarefree}\} =\frac{H}{\zeta_q(2)}  + O(\frac Hq) = H+O_n(\frac Hq)\;.
$$
In both cases the implied constants depend only on $n$.
\end{thm}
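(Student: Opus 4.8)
The plan is to detect squarefreeness via the discriminant and then count zeros of a polynomial over $\fq$. For monic $f\in\fq[t]$ of degree $n\ge1$, $f$ is squarefree if and only if $\disc_t(f)\ne0$: if $\partial f/\partial t\equiv0$ then $f$ is a $p$-th power and $\disc_t(f)=0$, and otherwise $\disc_t(f)=\pm\Res(f,\partial f/\partial t)$ vanishes exactly when $f$ has a repeated factor. Here $\disc_t$ is a fixed polynomial of degree $2n-2$ in the $n$ non-leading coefficients of $f$. In case (ii), $I(A;h)$ consists of the $f=A+r$ with $r=\sum_{j=0}^h r_jt^j$; since the coefficients of $f$ are affine-linear in $(r_0,\dots,r_h)\in\fq^{h+1}$, the quantity $P_{A,h}(r_0,\dots,r_h):=\disc_t(A+r)$ is a polynomial of degree $\le2n-2$ in $h+1$ variables, and the non-squarefree members of $I(A;h)$ are precisely the $\fq$-points of $\{P_{A,h}=0\}$. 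In case (i), put $R:=n-\deg Q\ge1$ and let $A_0$ be the representative of $A$ with $\deg A_0<\deg Q$ (so $\gcd(A_0,Q)=1$); the monic $f$ of degree $n$ with $f\equiv A\pmod Q$ are exactly $f=A_0+Qg$ with $g=t^R+\sum_{j=0}^{R-1}g_jt^j$, their coefficients are affine-linear in $(g_0,\dots,g_{R-1})\in\fq^R$, and the non-squarefree ones are the $\fq$-zeros of $P_{A,Q}(g_0,\dots,g_{R-1}):=\disc_t(A_0+Qg)$, again of degree $\le2n-2$.

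The key step is to show $P_{A,h}$ and $P_{A,Q}$ are not identically zero; this is where the hypotheses $h\ge1$, respectively $\gcd(A,Q)=1$, enter. Suppose $P_{A,Q}\equiv0$, i.e.\ with $g_0,\dots,g_{R-1}$ independent transcendentals the polynomial $f=A_0+Qg$ has a repeated root in an algebraic closure $K$ of $\fq(g_0,\dots,g_{R-1})$. One first checks $\partial f/\partial t\not\equiv0$ (for $R\ge2$ this is visible from its dependence on $g_1$; for $R=1$ use $\deg A_0'<\deg Q$), so $f$ and $\partial f/\partial t$ have a common root $\alpha\in K$. Differentiating $f(\alpha)=0$ with respect to the parameter $g_0$, and using $(\partial f/\partial t)(\alpha)=0$, gives $(\partial f/\partial g_0)(\alpha)=Q(\alpha)=0$; hence $\alpha$ is one of the finitely many zeros of the fixed polynomial $Q$, so $\alpha\in\overline{\fq}$ is independent of the $g_j$, and then $0=f(\alpha)=A_0(\alpha)+Q(\alpha)g(\alpha)=A_0(\alpha)$, contradicting $\gcd(A_0,Q)=1$. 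The same argument, differentiating in the free constant term $r_0$ of $r$, gives $P_{A,h}\not\equiv0$: once $\partial f/\partial t\not\equiv0$ is secured using the coefficient $r_1$ (here $h\ge1$ is needed), $(\partial f/\partial r_0)(\alpha)=1\ne0$ is an outright contradiction. That $h\ge1$ is necessary is shown by $A=t^p$, $h=0$, where every $f\in I(A;0)$ is a $p$-th power; some care with inseparability in characteristic $p$ is needed to make the above rigorous.

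Given the non-vanishing, the Schwartz--Zippel bound (a nonzero polynomial of degree $d$ in $m$ variables over $\fq$ has at most $dq^{m-1}$ zeros) yields at most $(2n-2)q^{h}$ non-squarefree $f\in I(A;h)$ and at most $(2n-2)q^{R-1}$ non-squarefree $f$ in the progression. Since $|I(A;h)|=H$ and $\#\{f\in\EM_n:f\equiv A\pmod Q\}=q^{n-\deg Q}$, and $H/\zeta_q(2)=H(1-1/q)$, this gives
\[
  \#\{f\in I(A;h):f\text{ squarefree}\}=H-O_n(H/q)=\frac{H}{\zeta_q(2)}+O_n(H/q)
\]
and $\#\{f\in\EM_n:f\equiv A\pmod Q,\ f\text{ squarefree}\}=q^{n-\deg Q}-O_n(q^{n-\deg Q-1})=\frac{q^n}{|Q|}(1+O_n(1/q))$; for $q$ bounded in terms of $n$ both statements are trivial with a large enough implied constant. (Alternatively, (ii) follows from (i) via the reversal $f(t)\mapsto t^nf(1/t)$, which sends $I(A;h)$, after discarding the $O(q^h)$ polynomials vanishing at $0$, into the setting of (i) with modulus $t^{n-h}$.) The one genuinely delicate point is the non-vanishing of the discriminant on the affine slice, especially the bookkeeping in characteristic $p$; everything else is routine.
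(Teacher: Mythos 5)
Your strategy is sound and it is genuinely different from the paper's. The paper does not work with discriminants at all: it quotes the general result on squarefree values of $F(a(t),t)$ for separable $F(x,t)$ with squarefree content (Theorem~\ref{Rudnick thm}, from \cite{Rudnick}) and merely verifies its hypotheses --- for (i) with $F(x,t)=A(t)+Q(t)x/\sign Q$, and for (ii) after splitting $I(A;h)$ into $A+\mathcal P_{\leq h-1}$ (trivially at most $q^h=H/q$ elements) and the slices $A+c\EM_h$, $c\in\fq^\times$, each handled by $F(x,t)=A(t)+cx$. Your argument is self-contained, treats the whole interval as a single $(h+1)$-parameter family, and delivers the same quality of error term $O_n(q^{m-1})$ via Schwartz--Zippel; the degree bound $2n-2$ for the discriminant in the non-leading coefficients and the reduction of both cases to the non-vanishing of one polynomial on an affine slice are correct, and the trade-off is that you must prove that non-vanishing yourself rather than import it.

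That non-vanishing step is the one place where your write-up is not yet a proof, and it is exactly the point you flag: differentiating $f(\alpha)=0$ with respect to $g_0$ (or $r_0$) presupposes that the derivation $\partial/\partial g_0$ of $L=\fq(g_0,\dots,g_{R-1})$ extends to $L(\alpha)$, which holds when $\alpha$ is separable over $L$ but can fail otherwise (for $\alpha=g_0^{1/p}$ no extension exists), and a vanishing discriminant could a priori come from an inseparable irreducible factor rather than from a separable repeated root. A clean repair that stays inside your framework: as a polynomial in $g_0$ over the UFD $L_0[t]$, $L_0=\fq(g_1,\dots,g_{R-1})$, one has $f=B+g_0Q$ with $\gcd_{L_0[t]}(B,Q)=\gcd(A_0,Q)=1$, so $f$ is primitive of degree one in $g_0$, hence irreducible in $L_0[t][g_0]=L_0[g_0][t]$, hence (being monic in $t$) irreducible in $L[t]$ by Gauss's lemma; since an irreducible polynomial is separable exactly when its $t$-derivative is nonzero, your verification that $\partial f/\partial t\neq 0$ then gives $\disc_t f\neq 0$ in every characteristic. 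The same works verbatim in the short-interval case with $r_0$ in place of $g_0$ (there the role of $Q$ is played by $1$, and $\gcd(A,Q)=1$ is replaced by nothing). Two smaller points: for $R\geq 2$ the $g_1$-coefficient of $\partial f/\partial t$ is $(Qt)'=Q't+Q$, which can vanish in characteristic $p$ (e.g.\ $Q=t^{p-1}$), so you should also use the $g_0$-coefficient $Q'$ --- if both vanish then $Q=0$; and if $Q$ is not monic one should normalize $g$ so that $Qg$ is monic, which affects nothing.
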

Note that for $h=0$, Theorem~\ref{thm asymp sf}(ii) need not hold:
If $q=p^k$ with $p$ a fixed odd prime, $n=p$ then the short interval
$I(t^n;0)=\{t^n+b:b\in \fq\}$ has no squarefrees, since
$t^p+b=(t+b^{q/p})^p$ has multiple zeros for any $b\in \fq$.

\subsubsection{Variance}
 We are able to compute the variance, the size of which turns out to depend
on the parity of the interval-length parameter $h$ in a surprising way:

\begin{thm}\label{thm var sf in si}
  Let $0\leq h\leq n-6$. Assume $q\to \infty$ with all $q$'s coprime to $6$.

 i) If $h$ is even then
\begin{equation*}
\Var{\EN_{\mu^2}(\bullet;h)} \sim q^{\frac h2}
\int\limits_{U(n-h-2)} \left| \tr \Sym^{\frac h2+1}U \right|^2 dU
 =\frac{\sqrt{H}}{\sqrt{q}}
\end{equation*}
(the matrix integral works out to be $1$).

ii) If $h$ is odd then
\begin{equation*}
\Var{\EN_{\mu^2}(\bullet;h)} \sim
q^{\frac{h-1}2}\int\limits_{U(n-h-2) }\Big|\tr U\Big |^2  dU
\int\limits_{U(n-h-2) } \Big| \tr\Sym^{\frac{h+3}2} U'\Big |^2 dU' =
\frac{\sqrt{H}}q
\end{equation*}
(both matrix integrals equal $1$).
\end{thm}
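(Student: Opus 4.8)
The strategy is to express $\EN_{\mu^2}(A;h)$ in terms of an $L$-function (or a short character-type sum) whose "spectral interpretation" is governed by a unitary matrix of size $n-h-2$, then use Katz-type equidistribution to convert the variance into a matrix integral over $U(n-h-2)$, and finally evaluate that integral. The starting point is the standard identity $\mu^2(f) = \sum_{d^2 \mid f} \mu(d)$, which lets us write the count of squarefrees in $I(A;h)$ as a double sum over $d$ with $\deg d \le \lfloor (n)/2 \rfloor$ and $f = d^2 e \in I(A;h)$. The main term comes from small $d$ (this is essentially Theorem~\ref{thm asymp sf}(ii)); the fluctuating part, after subtracting the main term, is a sum over $d$ in a dyadic-type range, and for each $d$ the inner count of $e$ with $d^2 e$ lying in the short interval is itself a short-interval sum that one analyzes via characters/Dirichlet series on $\fq[t]$.

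First I would recall, as in \cite{KR}, the Fourier-analytic description of short intervals: summing over $f\in I(A;h)$ is (up to normalization) detecting the top $n-h-1$ coefficients of $f$, which can be encoded by additive characters or, after Katz's work, by a family of $L$-functions associated to the data $(A,h)$ whose unitarized Frobenii equidistribute in $U(N)$ with $N = n-h-2$. Concretely, one wants to show
\begin{equation*}
\EN_{\mu^2}(A;h) - (\text{main term}) \;=\; q^{?}\,\Big(\text{linear combination of }\tr\Sym^{j}\Theta_A\Big) + (\text{lower order}),
\end{equation*}
where $\Theta_A \in U(n-h-2)$ is the relevant conjugacy class. The squarefree indicator's Dirichlet series is $\zeta_q(s)/\zeta_q(2s)$, and the pole/zero structure at $s$ and $2s$ is what forces the two different scales $q^{h/2}$ and $q^{(h-1)/2}$ depending on the parity of $h$: when $h$ is even the dominant contribution is a single symmetric-power trace $\tr\Sym^{h/2+1}\Theta_A$, while when $h$ is odd the $d$-sum splits and one gets a product of two independent traces $\tr\Theta_A \cdot \tr\Sym^{(h+3)/2}\Theta'_A$ on (essentially) independent matrices, coming from the contribution of $d$ of a fixed near-maximal degree together with the residual $e$-sum.

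Then I would square, average over $A\in \EM_n$, and invoke Katz's equidistribution theorem to replace $\frac{1}{q^n}\sum_A (\cdots)$ by the corresponding integral over $U(n-h-2)$ (with $dU$ Haar measure), picking up also the independence statement needed in the odd case (two matrices varying independently, hence a product of two integrals). The final step is purely representation-theoretic: by Schur orthogonality on $U(N)$ one has $\int_{U(N)} |\tr\Sym^k U|^2\,dU = 1$ for every $k \le N$ and $\int_{U(N)}|\tr U|^2\,dU = 1$, provided $N$ is large enough that no "wrap-around" occurs — this is exactly where the hypothesis $h \le n-6$ enters, guaranteeing $N = n-h-2 \ge 4$ so that the relevant symmetric powers ($\Sym^{h/2+1}$, resp. $\Sym^{(h+3)/2}$) stay in the stable range where the integral is $1$ rather than something smaller. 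Translating back through $H = q^{h+1}$ gives $\sqrt{H}/\sqrt{q}$ and $\sqrt{H}/q$ respectively.

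**Main obstacle.** The delicate part is not the matrix integrals (those are immediate from orthogonality) nor the equidistribution black box, but the bookkeeping that isolates the correct leading fluctuation: one must show that the contribution of $d$ in intermediate ranges is genuinely lower order, that the error terms in the "completion" of each inner short-interval sum are uniformly controlled in $q$, and — most importantly in the odd case — that the two pieces of the $d$-sum decouple into independent unitary matrices in Katz's framework rather than being correlated. Establishing this independence (the right monodromy group being a product, or at least that the joint distribution is a product in the limit) is where the real work lies, and it is also what produces the surprising parity dependence; the even case, by contrast, should follow more directly along the lines of \cite{KR}.
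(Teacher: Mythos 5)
Your high-level skeleton (variance $\to$ equidistribution $\to$ matrix integral $=1$ by irreducibility) has the right shape, but the mechanism you propose has a genuine gap at its core. The equidistribution theorems of Katz that are available here are statements about families of \emph{Dirichlet characters}: there is no unitary matrix $\Theta_A$ attached to an interval centre $A$, and no theorem allowing you to replace $\frac1{q^n}\sum_{A\in\EM_n}(\cdots)$ directly by a matrix integral. The actual route is to convert short intervals into arithmetic progressions modulo $t^{n-h}$ via the map $f\mapsto f^*$ (Lemma~\ref{lem:bijection}), expand in even characters modulo $t^{n-h}$, and use orthogonality to obtain
\begin{equation*}
\Var(\EN_{\mu^2}(\bullet;h)) = \frac1{\Phi_{ev}(t^{n-h})^2}\sum_{\substack{\chi\neq\chi_0 \bmod t^{n-h}\\ \mathrm{even}}} \big|\EM(n;\mu^2\chi)+\EM(n-1;\mu^2\chi)\big|^2 ,
\end{equation*}
and only then does equidistribution (over the family of primitive even $\chi$) enter. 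Your proposal never makes this reduction, so there is nothing for the averaging step to invoke.

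Second, you misidentify the source of the parity dependence and of the two independent matrices. It does not come from splitting the $d$-sum in $\mu^2(f)=\sum_{d^2\mid f}\mu(d)$ into a near-maximal-degree piece and a residual $e$-sum; it comes from the identity $\sum_{n\ge 0} \EM(n;\mu^2\chi)u^n = L(u,\chi)/L(u^2,\chi^2)$, whose numerator contributes the coefficients $\lambda_j(\Theta_\chi)$ of $\det(I-uq^{1/2}\Theta_\chi)$ and whose denominator contributes $q^{k/2}\tr\Sym^k\Theta_{\chi^2}$. Extracting the coefficient with $j+2k=n$, $0\le j\le N=n-h-2$, the top power of $q$ is attained at $j=N$ when $h$ is even (and $|\lambda_N|=|\det\Theta_\chi|=1$, leaving $q^{h/2}|\tr\Sym^{h/2+1}\Theta_{\chi^2}|^2$), and at $j=N-1$ when $h$ is odd (and $|\lambda_{N-1}(\Theta_\chi)|=|\tr\Theta_\chi|$, producing the product of two traces). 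The independence you need is that of the pair $(\Theta_\chi,\Theta_{\chi^2})$, which is precisely Katz's joint equidistribution result, Theorem~\ref{thm:KatzKR even}(ii); you correctly flag independence as the crux but attach it to the wrong objects and give no way to obtain it. Relatedly, your account of the hypotheses is off: $\int_{U(N)}|\tr\Sym^k U|^2\,dU=1$ for \emph{every} $k\ge0$ by irreducibility of $\Sym^k$, with no stable range or wrap-around condition; the constraints $h\le n-6$ and $q$ coprime to $6$ are the hypotheses of Katz's theorem for the modulus $t^{n-h}$ (one needs $n-h-1\ge 5$ for the joint statement), not conditions on the matrix integrals.
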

To compare with Hall's result \eqref{eq:Hall thm}, where the
variance is of order $\sqrt{H}$, one wants to set $H=\#I(A;h) =
q^{h+1}$ and then in the limit $q\to \infty$ we   get smaller
variance - either $\sqrt{H}/q^{1/2}$ ($h$ even) or $ \sqrt{H}/q$
($h$ odd). We found this sufficiently puzzling to check the analogue
of Hall's result for the polynomial ring $\fq[t]$ for the large
degree limit of {\em fixed} $q$ and $n\to \infty$. The result,
presented in  Appendix~\ref{appendix}, is consistent with
Theorem~\ref{thm var sf in si} in that for $H<(q^n)^{\frac
29-o(1)}$, the variance is
\begin{equation*}
%\begin{split}
\Var (\EN_{\mu^2}(\bullet;h) ) \sim  \sqrt{H}
 \frac{\beta_q}{1-\frac 1{q^3}}
\begin{cases}
\frac {1+\frac 1{q^2}}{\sqrt{q}},& h\;{\rm even,}\\    \\
\frac{1+\frac 1q}{q},&h\;{\rm odd,}
\end{cases}
%+ O(\frac{H^2n}{q^{n/3}})
%\\
%& = q^h (\frac 1{\zeta(2)} -\beta  \frac{ 1+\frac 1q }{1-\frac
%1{q^3}}) + \frac{\beta}{1-\frac 1{q^3}} q^{(h+1)/2}
%\begin{cases}
%\frac {1+\frac 1{q^2}}{\sqrt{q}},& h\;{\rm even}\\    \\
%\frac{1+\frac 1q}{q},&h\;{\rm odd}
%\end{cases}
%\end{split}
   \end{equation*}
%In particular get an asymptotic result provided  $h< \frac 29
%n-\log_q n $. i.e. $H<(q^n)^{\frac 29}/n$.
so that it is of order $\sqrt{H}$ for fixed $q$.

 We also obtain a similar result for arithmetic
progressions. Let $Q\in \fq[t]$ be a squarefree polynomial of degree
$\geq 2$, and $A$ coprime to $Q$. we set
\begin{equation}
\ES(A) = \sum_{\substack{f=A\bmod Q\\ f\in \mathcal M_n}} \mu^2(f)
\;.
\end{equation}
%For simplicity, we consider here the case of prime modulus. ??????
%\marginpar{squarefree}
The expected value over such $A$ is
\begin{equation}
\ave{\ES}_Q = \frac 1{\Phi(Q)} \sum_{\substack{ f\in \mathcal M_n\\
(f,Q)=1}} \mu^2(f) \sim \frac{q^n/\zeta_q(2)}{\Phi(Q)}\sim
\frac{q^n}{|Q|}.
\end{equation}
% and the variance is
% \begin{equation}
% \Var_Q(\ES) = \frac 1{\Phi(Q)^2} \sum_{\chi\neq \chi_0}
% |\EM(n;\mu^2\chi)|^2 \;.
% \end{equation}
We will show that the variance satisfies:
\begin{thm}
Fix $N\geq 1$. For any sequence of finite fields $\fq$, with $q$ odd, %$q=p^e$, of characteristic $p>2$,
and squarefree polynomials $Q\in \fq[t]$  with $\deg Q=N+1$, as
$q\to \infty$,
$$
\Var_Q(\ES)\sim  \frac {q^{n/2}}{|Q|^{1/2} } \times
\begin{cases}
 1/\sqrt{q}, & n\neq \deg Q\bmod 2 \\
 \\   1/q,  &  n= \deg Q\bmod 2 \;.
\end{cases}
$$
\end{thm}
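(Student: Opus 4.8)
The plan is to mirror the strategy used for short intervals (Theorem~\ref{thm var sf in si}), now carried out in the arithmetic‑progression setting.  First I would expand $\mu^2(f) = \sum_{\ell^2 \mid f} \mu(\ell)$ and insert this into $\ES(A)$, so that
\[
\ES(A) = \sum_{\ell : \deg \ell \le n/2} \mu(\ell) \,\#\{f \in \mathcal M_n : \ell^2 \mid f,\ f = A \bmod Q\}.
\]
Because $Q$ is squarefree of degree $N+1$ fixed and $q \to \infty$, the terms with $\deg \ell$ bounded dominate and one must be careful to separate the ``diagonal'' contribution (which produces the main term $q^n/|Q|$ that is subtracted off) from the fluctuating part.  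Writing $\ES(A) - \ave{\ES}_Q$ as a sum over $\ell$ with $(\ell,Q)=1$ of counts of $f=A\bmod Q$ in a sublattice, one detects the congruence $f \equiv A \pmod Q$ using Dirichlet characters mod $Q$.  This converts $\Var_Q(\ES)$ into a sum over nontrivial characters $\chi \bmod Q$ of $|\sum_{\ell}\mu(\ell)\chi(\ell)\, (\text{something})|^2$, and the relevant ``something'' is a short character sum that, via the explicit formula, is governed by the inverse zeros of the $L$-function $L(u,\chi)$ — equivalently by eigenvalues of a unitary matrix $\Theta_\chi \in U(\deg Q - 1) = U(N)$.

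The second step is to bring in Katz's equidistribution theorem: as $q\to\infty$ through odd prime powers, the conjugacy classes $\Theta_\chi$, for $\chi$ running over the even primitive characters mod $Q$ (and a companion family for odd ones, depending on the parity comparison $n \equiv \deg Q \bmod 2$), become equidistributed in $U(N)$ (or in the relevant symmetric‑power incarnation).  This is exactly the input that replaces the classical analytic machinery.  I would then identify, by tracking degrees of freedom, which symmetric power $\Sym^k$ of $\Theta_\chi$ carries the dominant term: the $\mu(\ell)$‑twisted sum of length up to $n/2$ picks out a $\tr\Sym^{k}$ with $k$ determined by $n$ and $\deg Q$, and the parity of $n - \deg Q$ decides whether a single matrix integral $\int_{U(N)}|\tr\Sym^k U|^2\,dU$ or a product of two such integrals appears — precisely as in Theorem~\ref{thm var sf in si}.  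Each such integral evaluates to $1$ by the standard fact that $\int_{U(N)}|\tr \Sym^k U|^2\,dU = 1$ for $0 \le k$ in the stable range, so the answer collapses to the clean powers of $q$ in the statement, with the overall scale $q^{n/2}/|Q|^{1/2}$ emerging from the normalization of the character‑sum variance and $\Phi(Q)\sim |Q|$.

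The main obstacle I anticipate is \emph{not} the matrix‑integral evaluation (routine, as above) but the bookkeeping needed to justify that the error terms are genuinely lower order uniformly in $q$: one must control the contribution of the principal character and of imprimitive characters, handle the $\ell$'s with $\gcd(\ell,Q)\neq 1$, and verify that the ``off‑diagonal'' cross terms in $|\ES(A)-\ave{\ES}_Q|^2$ — after opening the square and summing over $A$ coprime to $Q$ — really do vanish in the limit rather than contributing at the same order.  A subtler point is checking that the correct Katz equidistribution result applies to the specific family of $L$-functions that arises here (this is where the hypothesis that $Q$ is squarefree and the parity of $\deg Q$ relative to $n$ enters), and that the symmetry type is unitary so that no extra factors appear.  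Once the reduction to $\int_{U(N)}|\tr\Sym^k U|^2\,dU$ (or the product of two such) is in place, the conclusion is immediate; the work is all in setting up that reduction cleanly and uniformly.
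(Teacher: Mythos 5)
Your overall skeleton (orthogonality over Dirichlet characters mod $Q$, passage to $L$-functions, Katz equidistribution, evaluation of unitary matrix integrals) is the same as the paper's, and your opening decomposition $\mu^2(f)=\sum_{\ell^2\mid f}\mu(\ell)$ is just the additive form of the identity the paper uses multiplicatively, namely
\begin{equation*}
\sum_{f\ \mathrm{monic}}\mu^2(f)\chi(f)u^{\deg f}=\frac{L(u,\chi)}{L(u^2,\chi^2)}\;.
\end{equation*}
However, there are two genuine gaps. First, you misplace the source of the parity dichotomy: you suggest that $n\equiv\deg Q\bmod 2$ or not determines whether one works with even or odd characters. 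It does not. For squarefree $Q$ of fixed degree and $q\to\infty$, almost all characters mod $Q$ are odd and primitive, and these always carry the main term; the even and imprimitive ones are discarded as an $O(1/q)$ proportion. The parity of $n-\deg Q$ instead decides \emph{which coefficient} of $L(u,\chi)/L(u^2,\chi^2)$ survives in $\EM(n;\mu^2\chi)=\sum_{j+2k=n}\lambda_j(\Theta_\chi)\tr\Sym^k(\Theta_{\chi^2})\,q^{(j+k)/2}$: the top term is $\lambda_N(\chi)\Sym^{(n-N)/2}(\chi^2)$ with $|\lambda_N|=|\det\Theta_\chi|=1$ when $n\equiv N\bmod 2$ (one integral, total $q^{n/2}|Q|^{-1/2}q^{-1/2}$), and $\lambda_{N-1}(\chi)\Sym^{(n-N+1)/2}(\chi^2)$ with $|\lambda_{N-1}|=|\tr\Theta_\chi|$ when $n\not\equiv N\bmod 2$ (two integrals, an extra $q^{-1/2}$).

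Second, and more importantly, your plan never identifies the essential analytic input: because the twisted Dirichlet series involves $L(u^2,\chi^2)$, the dominant term couples $\Theta_\chi$ with $\Theta_{\chi^2}$, and to convert $\frac{1}{\Phi(Q)}\sum_\chi|\lambda_{N-1}(\Theta_\chi)\tr\Sym^k(\Theta_{\chi^2})|^2$ into a \emph{product} of two matrix integrals one needs the joint equidistribution and \emph{independence} of the pair $(\Theta_\chi,\Theta_{\chi^2})$ in $U(N)\times U(N)$ — this is Katz's second theorem for odd primitive characters (Theorem~\ref{thm:KatzKR odd}(ii)), valid for $q$ odd, and it is where the hypothesis $q$ odd genuinely enters (so that $\chi\mapsto\chi^2$ behaves well). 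Equidistribution of the single family $\Theta_\chi$ is not sufficient. You describe the remaining work as bookkeeping of error terms, but without naming this independence statement the reduction to the product of integrals — which you assert "appears precisely as in the short-interval case" — cannot be justified.
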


\subsection{General approach}
It may be helpful to give an informal sketch of the general approach we take in proving 
most of the theorems stated above.  
Short intervals are transformed into sums over special arithmetic progressions, a feature special to function fields that was used in our earlier work \cite{KR}. 
Sums involving $\mu$ and $\mu^2$ that run over all monic polynomials of a given 
degree may be evaluated in terms of a zeta function that is the function-field analogue of the Riemann zeta function.  Restricting 
to short intervals or arithmetic progressions leads to sums over Dirichlet characters involving the associated L-functions.
The L-functions in question may be written in terms of unitary matrices.  It has recently been established by N. Katz that, in the 
limit when $q\rightarrow\infty$, these matrices become equidistributed in the unitary group, in the sense that the character sums we need 
are, in the large-$q$ limit, equal to integrals over the unitary group.  Evaluating these integrals leads to the formulae appearing in our theorems.

%\newpage

\section{Asymptotics for squarefrees: Proof of Theorem~\ref{thm asymp sf}}
 We want to show that almost all polynomials in an arithmetic
progression, or in a short interval are squarefree. We recall the
statement:

i) If $\deg Q<n$ and $\gcd(A,Q)=1$ then
\begin{equation}\label{asymp arith prog}
 \#\{f\in \EM_n: f=A\bmod Q,  f \; {\rm squarefree}\}
\sim \frac{q^n}{|Q|}\sim \frac{q^n}{\Phi(Q)} \;.
\end{equation}

ii) If $0<h\leq n-2$ then
\begin{equation}\label{asymp short int}
\#\{f\in I(A;h): f \;  {\rm squarefree}\} =\frac{H}{\zeta_q(2)}  +
O(\frac Hq) = H+O(\frac Hq) \;.
\end{equation}

These follow from a general result \cite{Rudnick}:
\begin{thm}\label{Rudnick thm}
 Given a separable polynomial $F(x,t)\in \fq[x,t]$, with squarefree content then
the number of monic polynomials $a\in \EM_m$, $m>0$, for which $F(a(t),t)$
is squarefree (in $\fq[t]$) is asymptotically
$$q^m + O\Big(q^{m-1}(m\deg F+\Ht(F ))\deg F\Big) \;.
$$
\end{thm}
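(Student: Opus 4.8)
The plan is to reduce everything to a single application of the elementary fact that a hypersurface $\{\Delta=0\}\subseteq\mathbb A^m$ cut out by a nonzero polynomial $\Delta$ contains at most $(\deg\Delta)q^{m-1}$ points of $\fq^m$. Since $\#\EM_m=q^m$, it suffices to produce such a $\Delta$, in the $m$ coordinates given by the coefficients of $a$, whose vanishing set contains every $a\in\EM_m$ for which $F(a(t),t)$ fails to be squarefree, and to bound its degree by $O\big((m\deg F+\Ht F)\deg F\big)$; the asserted asymptotic is then $q^m$ minus an error of exactly the stated size.

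To build $\Delta$, write $a(t)=t^m+c_{m-1}t^{m-1}+\dots+c_0$ and $F(x,t)=\sum_{i=0}^{k}f_i(t)x^i$ with $k=\deg_x F$ and $\deg_t f_i\le\Ht(F)$, so that $g_a(t):=F(a(t),t)=\sum_i f_i(t)a(t)^i$ has $t$-degree at most $L:=mk+\Ht(F)$. Collecting powers of $t$ we may write $g_a(t)=\sum_\ell G_\ell(c_0,\dots,c_{m-1})\,t^\ell$, where each $G_\ell$ is a polynomial of degree $\le k$ in the $c_j$ (it is assembled from the coefficients of the $a(t)^i$ with $i\le k$). Put
\[
\Delta(c):=\Res_t\!\Big(g_a,\ \tfrac{\partial}{\partial t}g_a\Big)\in\fq[c_0,\dots,c_{m-1}];
\]
this is the resultant of two polynomials in $t$ of degrees $\le L$ and $\le L-1$ whose coefficients are polynomials of $c$-degree $\le k$, so expanding the Sylvester determinant gives $\deg\Delta\le(2L-1)k=O\big((m\deg F+\Ht F)\deg F\big)$. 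Since $\fq$ is perfect, a polynomial in $\fq[t]$ of positive degree is squarefree exactly when it is coprime to its $t$-derivative, i.e.\ exactly when its discriminant does not vanish; hence for every $a\in\EM_m$ outside $\{\Delta=0\}$, and outside a further set of dimension $<m$ on which the top $t$-coefficient of $g_a$ degenerates or $g_a$ drops to a constant, the polynomial $g_a$ is squarefree. Combining the point count for $\{\Delta=0\}$ with the $O\big(q^{m-1}\operatorname{poly}(\deg F)\big)$ contribution of the degenerate set gives the theorem, \emph{provided $\Delta$ is not identically zero}.

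The main obstacle is precisely this last point: showing $\Delta\not\equiv0$, equivalently that $F(a(t),t)$ is squarefree for the generic $a$. This is where the hypotheses are essential and where the only genuinely non-formal reasoning lies. If the content $\operatorname{cont}_x(F)$ had a repeated prime factor it would divide $g_a$ for every $a$ and force $\Delta\equiv0$; granting that the content is squarefree, write $F=\operatorname{cont}_x(F)\cdot F_0$ with $F_0$ primitive, and note that separability of $F$ makes the plane curve $\{F_0=0\}$ reduced. A repeated root of $g_a$ at $t=t_0$ forces the point $(a(t_0),t_0)$ to be a singular point of $\{F_0=0\}$ or a point where the graph $x=a(t)$ is tangent to it (together with the disjoint possibility of a common factor with $\operatorname{cont}_x(F)$). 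For $m\ge1$ the generic monic $a$ of degree $m$ avoids the finitely many singular points, is tangent to $\{F_0=0\}$ nowhere --- the tangency condition on $a$ is the vanishing of a resultant which is not identically zero precisely because a reduced curve cannot be everywhere tangent to the $m$-parameter family of monic graphs --- and has $g_a$ prime to $\operatorname{cont}_x(F)$; so $g_a$ is squarefree and $\Delta\not\equiv0$. Any positive-characteristic subtlety in this transversality argument (for instance, that $F_0(a(t),t)$ is not forced to be inseparable for all $a$) is absorbed by the separability hypothesis on $F$. With $\Delta\not\equiv0$ established, the rest of the proof is just the formal degree bound and the point count above.
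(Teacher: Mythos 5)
First, a remark on provenance: the paper does not prove this statement at all --- it is imported verbatim from \cite{Rudnick}, where it is established by a different, sieve-style argument (counting, prime by prime, the $a$ with $P^2\mid F(a(t),t)$, treating small and large $\deg P$ separately). So your route is necessarily your own, and its outer shell is in fact sound: with $\Delta(c)=\Res_t\bigl(g_a,\tfrac{\partial}{\partial t}g_a\bigr)$ built from the Sylvester matrix at the generic $t$-degree $L\le m\deg F+\Ht(F)$, every entry has degree $\le\deg F$ in the coefficients $c$ of $a$, so $\deg\Delta\ll(m\deg F+\Ht F)\deg F$; any $a$ for which $g_a$ is not squarefree makes the specialized Sylvester determinant vanish (a common root of $g_a$ and $g_a'$, or $g_a'\equiv0$, destroys surjectivity of the Sylvester map, with no separate ``degenerate stratum'' needed); and the elementary bound $\#\{c\in\fq^m:\Delta(c)=0\}\le(\deg\Delta)\,q^{m-1}$ then gives exactly the stated error term --- \emph{provided} $\Delta\not\equiv0$.

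That proviso is where the entire content of the theorem sits, and it is the step you have not proved. Your justification --- that the generic monic $a$ misses the finitely many singular points and ``is tangent nowhere, because a reduced curve cannot be everywhere tangent to the $m$-parameter family of monic graphs,'' with characteristic-$p$ subtleties ``absorbed by the separability hypothesis'' --- simply asserts the generic-transversality statement at issue. In characteristic $p$ such soft principles do fail for natural subfamilies: take $F(x,t)=x-t^p$ (separable in $x$, content $1$) and $p\mid m$; then for \emph{every} $c$ one has $F(t^m+c,t)=t^m-t^p+c=(t^{m/p}-t+d)^p$ with $d^p=c$, so the whole one-parameter subfamily $\{t^m+c\}$ gives non-squarefree values, and the ``tangency resultant'' restricted to it vanishes identically. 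Hence non-vanishing of $\Delta$ genuinely requires using the full monic family together with separability in $x$ and primitivity of $F/\mathrm{cont}(F)$, and it needs an argument, not an appeal to reducedness. One short way to close the gap: work over $K=\overline{\FF}_q(c_0,\dots,c_{m-1})$ with $A(t)=t^m+c_{m-1}t^{m-1}+\dots+c_0$, write $F=C\cdot F_0$ with $C$ the squarefree content and $F_0$ primitive, and suppose $P\in K[t]$ is irreducible with $P^2\mid F(A(t),t)$. If $P\mid C$ then $P$ has a root $\tau\in\overline{\FF}_q$ and $P\mid F_0(A(t),t)$, so $F_0(A(\tau),\tau)=0$ with $A(\tau)$ transcendental over $\overline{\FF}_q$, forcing $F_0(x,\tau)\equiv0$ and contradicting primitivity. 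Otherwise $P^2\mid F_0(A(t),t)$; since $P^2\mid g$ implies $P\mid Dg$ for \emph{every} derivation $D$ of $K[t]$, the choice $D=\partial/\partial c_0$ gives $P\mid F_{0,x}(A(t),t)$, and clearing denominators in $\gcd(F_0,F_{0,x})=1$ over $\fq(t)$ yields $uF_0+vF_{0,x}=r(t)$ with $0\neq r\in\fq[t]$, whence $P\mid r$, its roots lie in $\overline{\FF}_q$, and the previous sentence applies again. With $\Delta\not\equiv0$ so established (and the Sylvester matrix formed at the true generic degree so it is not trivially zero), your counting argument does deliver the theorem.
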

Here if $F(x,t) = \sum_{j=0}^{\deg F} \gamma_j (t) x^j$ with
$\gamma_j(t)\in \fq[t]$ polynomials, the content of $F$ is
$\gcd(\gamma_0,\gamma_1,\dots,)$ and the height is $\Ht(f) = \max_j
\deg \gamma_j$.

For an arithmetic progression $f=A\mod Q$, $f\in \EM_n$ monic of
degree $n$,  with $\gcd(A,Q)=1$,  $\deg A<\deg Q$, we take the corresponding
polynomial to be
$$
F(x,t) = A(t) +\frac 1{\sign Q}Q(t)x
$$
where $\sign Q\in \fq^\times$ is such that $Q(t)/\sign Q$ is monic.
Then $F(x,t)$  has degree one (in $x$),
hence is certainly separable, and has content equal to $\gcd(A,Q)=1$
so is in fact primitive. The height of $F$ is $\max(\deg Q,\deg A) =
\deg Q<n$ which is independent of $q$.

Since  $\deg A<\deg Q$,
then $f=A+aQ/\sign(Q)$ is monic of degree $n$ if and only if $a$ is monic of
degree $n-\deg Q>0$, and by Theorem~\ref{Rudnick thm} the number of
such $a$ for which $F(a(t),t)$ is squarefree is
$$q^{n-\deg Q}+O(q^{n-\deg Q-1}) = \frac {q^n}{|Q|}(1+O(\frac 1q)) \;.
$$
This proves \eqref{asymp arith prog}.

To deal with the short interval case, let $0<h\leq n-2$, and $A\in
\EM_n$ be monic of degree $n$. We want to show that the number of
polynomials $f$ in the short interval $I(A;h)$ which are squarefree
is $H + O(H/q)$ (recall $H=\#I(A;h) = q^{h+1}$).

We write
$$ I(A;h) = (A+\mathcal P_{\leq h-1})\cup \coprod_{c\in \fq^\times}  
(A+c\EM_h)\;.
$$
The number of squarefrees in $A+\mathcal P_{\leq h-1}$ is at most $\#\mathcal P_{\leq h-1}= q^h$.
The squarefrees in $A+c\EM_h$ are the squarefree values at monic
polynomials of degree $h$ of the polynomial $F(x,t)=A(t)+cx$, which
has degree $1$, content $\gcd(A(t),c)=1$ and height $\Ht(F) = \deg
A=n$. By Theorem~\ref{Rudnick thm} the number of substitutions $a\in
\EM_h$ for which $F(a)$ is squarefree is
$$ q^h + O(nq^{h-1}) \;.
$$
Hence number of squarefrees in $I(A;h)$ is
$$\sum_{c\in \fq^\times}(q^h + O(nq^{h-1})) +O(q^h)= H + O(\frac Hq)\;,
$$
proving \eqref{asymp short int}.

\begin{comment}
We use the map $\inv_n$ which maps the short interval $I(A;h)$
bijectively onto the set of all polynomial $g$ of degree $\leq n$
(not necessarily monic) in the arithmetic progression
$g=\theta_n(A)\mod t^{n-h}$ (see Lemma~\ref{lem:bijection}).  We
will show that squarefrees are mappend bijectivly onto squarefrees
here, hence the number of squarefree polynomials in $I(A;h)$ equals
the number of squarefree polynomials in the arithmetic progression
$g=\theta_n(A)\bmod t^{n-h}$ with $\deg g\leq n$, which by
\eqref{asymp arith prog} consists of almost all of them with a
remainder of $O(H/q)$. This will prove \eqref{}.

To see that squarefrees are mapped onto squarefrees, note that if
$\deg(ab)=n$ then $\theta_n(f) = \theta_{\deg a}(a)\theta_{\deg
b}(b)$  and so if $P^2\mid f$ then $\theta_{\deg P}(P)^2 \mid
\theta_n(f)$. If $P$ is prime with $P(0)\neq 0$ (so $P\neq ct$) then
$ P^*:=\theta_{\deg P}(P)$ is also prime, of the same degree as that
of $P$. Thus the map $P\mapsto P^*$ permutes the set of all primes
(not necessarily monic) which are coprime to $t$, i.e. other than
$ct$. Moreover it is an involution when restruced to polynomials
coprime to $t$. Thus for such primes $P$, $\theta_n(f)$ is divisible
by $P*^2$ if and only if $P^2\mid f$.
\end{comment}

\section{Asymptotics for M\"obius sums}\label{sec:asymp}

 In this section we deal with cancellation in the individual sums
 $$\EN_\mu(A;h)  = \sum_{f\in I(A;h)} \mu(f) \;.
 $$
 Note that the interval $I(A;h)$ consists of all polynomials of the
 form $A+g$, where $g\in \mathcal P_{\leq h}$ is the set of all
 polynomials of degree at most $h$.

 \subsection{Small $h$}
We first point out that for  $h=0,1$ there need not be any
cancellation. We  recall Pellet's formula for the discriminant (in
odd characteristic)
\begin{equation}\label{Pellet}
\mu(f) = (-1)^{\deg f} \chi_2(\disc f)
\end{equation}
where $\chi_2:\fq^\times \to \{\pm 1\}$ is the quadratic character
of $\fq$ and $\disc f$ is the discriminant of $f$.
 From Pellet's formula we find (as in \cite{CR})
\begin{equation}
\EN_\mu(A;h) = (-1)^{\deg A} \sum_{g\in \mathcal P_{\leq h}}
\chi_2(\disc(A+g)) \;.
\end{equation}

Let $A(t)=t^n$. The discriminant of the trinomial $t^n+at+b$ is (see
e.g. \cite{Swan})
\begin{equation}
\disc(t^n+at+b) = (-1)^{n(n-1)/2} \Big(  n^nb^{n-1}+ (1-n)^{n-1} a^n
\Big) \;.
\end{equation}
Hence for the interval $I(t^n;1) = \{t^n+at+b:a,b\in \fq\}$ we
obtain
\begin{equation}
\EN_\mu(t^n;1) = (-1)^n \chi_2(-1)^{n(n-1)/2} \sum_{a,b\in \fq}
\chi_2 \Big(  n^nb^{n-1}+ (1-n)^{n-1} a^n \Big) \;.
\end{equation}
Therefore if $q=p^k$ with $p$ an odd prime  and  $2p\mid n$ then
\begin{equation}
\EN_\mu(t^n;1) =\chi_2(-1)^{n/2} q\sum_{a\in \fq} \chi_2(a)^n = \pm
q(q-1)
\end{equation}
so that $| \EN_\mu(t^n;1)|\gg q^2=H$.  A similar construction also
works for $h=0$.

\subsection{Large $h$}
We also note that for $h=n-2$, and $p\nmid n$ ($p$ is the characteristic of $\fq$) the
M\"obius sums all coincide. This is because $\mu(f(t)) =
\mu(f(t+c))$ if $\deg f\geq 1$. Therefore
$$\EN_\mu(A(t);h) = \EN_\mu(A(t+c);h)\;.$$
Now if $A(t) = t^n+a_{n-1}t^{n-1}+\dots$ is the center of the
interval, then choosing $c=-a_{n-1}/n$ gives
$$
A(t+c) = t^n+\tilde a_{n-2}t^{n-2}+\dots
$$
which contains no term of the form $t^{n-1}$. Therefore if $h=n-2$
then
$$\EN_\mu(t^n+a_{n-1}t^{n-1};n-2) = \EN_\mu(t^n;n-2)$$
has just one possible value.

Thus we may assume that $h\leq n-3$.

We note that the same is true for the squarefree case.

\subsection{Proof of Theorem~\ref{prop asymp mobius
short}} Now we prove Theorem~\ref{prop asymp mobius short}; that
is, we show that for $h\geq 2$, for any (monic) $A(t)$ of degree $n$,
\begin{equation}
\sum_{a\in \mathcal P_{\leq h}} \mu(A+a) \ll \frac{H}{\sqrt{q}}
\end{equation}
Writing $a(t) = a_ht^h+\dots +a_1t+b$, it suffices to show that
there is a constant $C=C(n,h)$ (independent of $A$ and $\vec a=(a_1,\dots,a_h)$) so that for ``most" choices of $\vec a$, (i.e. for all but
$O(q^{h-1})$) we have
\begin{equation}
\left| \sum_{b\in \fq} \mu(A(t) + a_ht^h+\dots +a_1t+b) \right|\leq
C\sqrt{q}.
\end{equation}

Using Pellet's formula, we need to show that for most $\vec a$,
\begin{equation}\label{bd for char sum}
\left| \sum_{b\in \fq}\chi_2\left( \disc(A(t) + a_ht^h+\dots
+a_1t+b)\right) \right|\leq C\sqrt{q}
\end{equation}
Now $D_a(b):= \disc(A(t) + a_ht^h+\dots +a_1t+b)$ is a polynomial in
$b$, of degree $\leq n-1$, and if we show that for most $\vec a$
 it is non-constant and squarefree
then by Weil's theorem we will get that \eqref{bd for char sum}
holds for such $\vec a$'s, with $C=n-2$. The argument in
\cite[Section 4]{CR} works verbatim here to prove that. \qed

An alternative argument is to use the work of Bank, Bary-Soroker and
Rosenzweig \cite{BBR} who prove equidistribution of cycle types of
polynomials in any short interval $I(A;h)$ for $2\leq h\leq n-2$ and
$q$ odd (this also uses \cite{CR}). Now for $f\in \EM_n$ squarefree,
$\mu(f) = (-1)^n \sign(\sigma_f)$ where $\sigma_f\subset S_n$ is the
conjugacy class of permutations induced by the Frobenius acting on
the roots of $f$, and $\sign$ is the sign character. For any $f\in
\EM_n$,  not necessarily squarefree, we denote by
$\lambda(f)=(\lambda_1,\dots,\lambda_n)$ the cycle structure of $f$,
which for squarefree $f$ coincides with the cycle structure of the
permutation $\sigma_f$. Then $\sign(\sigma_f) = \sign(\lambda(f)) :=
\prod_{j=1}^n(-1)^{(j-1)\lambda_j}$. Thus
\begin{equation}
\sum_{f\in I(A;h)} \mu(f) = (-1)^n \sum_{\substack{f\in I(A;h)\\\rm
squarefree}}\sign(\sigma_f)
\end{equation}
By Theorem~\ref{thm asymp sf},
%the results in \cite{Rudnick} (applied to the polynomials $f(x,t)
%= A(t)+cx$, $c\in \fq^\times$),
all but $O_n(H/q)$ of the
polynomials in the short interval $I(A;h)$ are squarefree, hence
\begin{equation}
\begin{split}
\sum_{\substack{f\in I(A;h)\\\rm squarefree}}\sign(\sigma_f)
 &=\sum_{f\in I(A;h)}\sign(\sigma_f) +
 O(\frac Hq)\\
& =H\left( \frac{1}{n!}\sum_{\sigma\in S_n}\sign(\sigma) +
O(\frac{1}{\sqrt{q}})\right) = O(\frac{H}{\sqrt{q}})
\end{split}
\end{equation}
by  equidistribution of cycle types in short intervals \cite{BBR},
and recalling that $\sum_{\sigma\in S_n} \sign(\sigma)=0$ for $n>1$.

%\newpage

\section{Variance in arithmetic progressions: General theory}

%Let $\fq$ be a finite field of $q$ elements and $\fq[t]$ the ring of
%polynomials with coefficients in $\fq$. Let    $\mathcal M_n$ be the
%set of monic polynomials of degree $n$. For a nonzero polynomial
%$f$, set $|f|:=q^{\deg f}$. \marginpar{check for redundancies}

%\marginpar{$T$ vs $t$ vs $x$}

%\marginpar{Define symmetric, cycle structure} \marginpar{at this
%point we don't assume that it is symmetric}

Let $\alpha:\fq[t]\to \C$ be a function on polynomials, which is
``even" in the sense that
$$\alpha(cf) = \alpha(f)$$ for the units
$c\in \fq^\times$. We assume that
\begin{equation}
\max_{\deg f\leq n} |\alpha(f)|\leq A_n
\end{equation}
with $A_n$ independent of $q$.  We will require some further
constraints on $\alpha$ later on.

We denote by $\ave{\alpha}_n$ the mean value of
$\alpha$ over all monic polynomials of degree $n$:
\begin{equation}
\ave{\alpha}_n:= \frac 1{q^n} \sum_{f\in \EM_n} \alpha(f)\;.
\end{equation}

%We will check that for cycle functions, that if $\alpha(f) =
%a(\lambda(f))$ for some map $a$ on partitions, the mean value
%$\ave{\alpha}_n$ is asymptotic, as $q\to \infty$, to the expected
%value of $a$ over all permutations of $n$ letters:
%\begin{equation}
%\lim_{q\to \infty} \ave{\alpha}_n  = \E_{S_n}(a) = \frac
%1{\#S_n}\sum_{\sigma\in S_n} a(\lambda(\sigma))
%\end{equation}

Let $Q\in \fq[t]$ be squarefree, of positive degree.
%\footnote{ We will assume $Q$ is squarefree, or even prime}.
For an arithmetic function
$\alpha:\fq[t]\to \C$,  we define its mean value over coprime residue
classes by
\begin{equation}
\ave{\alpha}_Q :=\frac 1{\Phi(Q)}\sum_{\substack{A\bmod Q\\
\gcd(A,Q)=1}} \alpha(A) \;.
\end{equation}

The sum of $\alpha$ over all monic
%\footnote{If we don't assume $\alpha$ is even then one should
%sum over all polynomials, not necessarily monic.}
polynomials of degree $n$ lying in the arithmetic progressions $f=A
\mod Q$ is
\begin{equation}
\ES_{\alpha,n,Q}(A):=\sum_{\substack{f\in \mathcal M_n\\f=A\bmod Q}}
\alpha(f) \;.
\end{equation}
We wish to study the fluctuations in $\ES(A)$ as we vary $A$ over
residue classes coprime to $Q$.
%\marginpar{do we consider and only (A,Q)=1}
The mean value of $\ES$ is
\begin{equation}
\ave{\ES_\alpha}_Q =\frac 1{\Phi(Q)} \sum_{\substack{f\in \mathcal M_n\\
 (f,Q)=1}} \alpha(f)
 % =\frac{q^n}{\Phi(Q)} \left( \ave{\alpha}_n +O( \frac {\max_{  \mathcal M_n} |\alpha|}{\Phi(Q)} ) \right)    ??
 \end{equation}
%\marginpar{for $Q$ prime}
where $\Phi(Q)$ is the number of invertible residues modulo $Q$.

Our goal is to compute the variance
\begin{equation}
\Var_Q (\ES_\alpha) = \frac 1{\Phi(Q)}\sum_{\substack{A\bmod Q\\
(A,Q)=1}} |\ES_\alpha(A) - \ave{\ES_\alpha}|^2 \;.
\end{equation}

%Our goal is to evaluate the variance in a number of cases, when
%$\alpha$ is the divisor function $\divid$, the M\"obius function
%$\mu$, its square $\mu^2$ (counting squarefrees in arithmetic
%progressions). Possibly even the generalized von-Mangoldt function
%$\Lambda_r$, counting almost-primes.

%Using the orthogonality relation for characters modulo $Q$ gives, if $(A,Q)=1$,
%\begin{equation}\label{apply orthogonality gen}
%\ES_{\alpha,n,Q}(A) = \frac 1{\Phi(Q)} \sum_{\substack{f\in \mathcal M_n\\
 %(f,Q)=1}} \alpha(f)  + \frac{1}{\Phi(Q)} \sum_{\chi\neq
%\chi_0}\overline{\chi(A)}\EM(n;\alpha\chi)
%\end{equation}
%where
%\begin{equation}
%\EM(n;\alpha\chi) :=\sum_{f\in \mathcal M_n}\chi(f)\alpha(f)
%\end{equation}
%Thus the mean value over all $A\bmod Q$ (coprime to $Q$) is
%\begin{equation}
%\ave{\ES} =\frac 1{\Phi(Q)} \sum_{\substack{f\in \mathcal M_n\\
 %(f,Q)=1}} \alpha(f)
 %\end{equation}

\subsection{A formula for the variance}
%Let $Q\in \fq[t]$ be a polynomial of positive degree, which we
%assume henceforth is squarefree.

%We study the sums over arithmetic progressions  $f=A\bmod Q$, with
%$A$ coprime to $Q$:
%\begin{equation}
%\ES(A) = \ES_{\alpha,n,Q}(A) = \sum_{\substack{f\in \EM_n \\
%f=A\bmod Q}} \alpha(f)
%\end{equation}

Expanding in Dirichlet characters modulo $Q$ gives
\begin{equation}\label{expand S in dir}
\ES(A) = \frac 1{\Phi(Q)} \sum_{\chi \bmod Q} \bar\chi(A)
\EM(n;\alpha\chi)
\end{equation}
where
\begin{equation}
\EM(n;\alpha\chi) :=\sum_{f\in \mathcal M_n}\chi(f)\alpha(f) \;.
\end{equation}

The mean value is the contribution of the trivial character
$\chi_0$:
\begin{equation}
\ave{\ES}_Q =\frac 1{\Phi(Q)}\sum_{\substack{ f\in \EM_n\\
\gcd(f,Q)=1}} \alpha(f)
\end{equation}
so that
\begin{equation}\label{apply orthogonality gen}
\ES(A)- \ave{\ES}_Q = \frac 1{\Phi(Q)} \sum_{\chi \neq \chi_0\bmod
Q} \bar\chi(A) \EM(n;\alpha\chi)\;.
\end{equation}

Inserting \eqref{expand S in dir} and using the orthogonality relations for Dirichlet characters
as in \cite{KR},  we see that the
variance is
\begin{equation}\label{gen formula for varianceAP}
\Var_Q(\ES) =\ave{\left|\ES-\ave{\ES}_Q \right|^2}_Q= \frac
1{\Phi(Q)^2} \sum_{\chi\neq \chi_0} |\EM(n;\alpha\chi)|^2 \;.
\end{equation}

\subsection{Small $n$}\label{sec: small $n$}%{sec:small n}

% We will deal with $n\geq \deg Q$, since otherwise each arithmetic progression modulo $Q$ contains at most one element of degree $n$
%and in this case (see \S~\ref{sec: small $n$})
%\begin{equation}\label{elt claim}
%\Var_Q(\ES) \sim  \frac{q^n}{\Phi(Q)} \ave{\alpha^2}_n  ,\quad
%n<\deg Q
%\end{equation}

 If $n<\deg Q$, then
%the problem is both boring and trivial, since
there is at most \underline{one} $f$ with $\deg f=n$ and $f=A\bmod
Q$,
and in this case
%(see \S~\ref{sec: small $n$})
\begin{equation}\label{elt claim}
\Var_Q(\ES) \sim  \frac{q^n}{\Phi(Q)} \ave{\alpha^2}_n  ,\quad
n<\deg Q \;.
\end{equation}

Indeed, if $n<\deg Q$, then
\begin{equation}
|\ave{\ES}_Q|   =| \frac 1{\Phi(Q)}\sum_{\substack{f\in \EM_n\\
\gcd(f,Q)=1}} \alpha(f)|\leq  \frac 1{\Phi(Q)}\sum_{f\in \EM_n}
|\alpha(f)| \leq \frac{A_nq^n}{\Phi(Q)} \ll_n \frac 1q\;.
\end{equation}
%Moreover if $Q$ is prime and $\deg
%f=n<\deg Q$ then automatically $f$ is co-prime to $Q$. Further, if
%we assume that $n\leq (1-2\delta)\deg Q$ then the mean value is
%\begin{equation}
%\ave{\ES} =q^{-\delta n}
%\end{equation}
Hence
\begin{equation*}
\begin{split}
\Var_Q(\ES) &= \frac 1{\Phi(Q)} \sum_{\substack{A\bmod Q\\\gcd(A,Q)=1}} |\ES(A)|^2 (1+O(q^{-1})) \\
& =\frac 1{\Phi(Q)}\sum_{\substack{f\in \mathcal M_n\\ \gcd(f,Q)=1}}|\alpha(f)|^2 (1+ O(q^{-1})) \\
& = \frac {q^n}{\Phi(Q)}\ave{|\alpha|^2}_n(1+O(\frac 1q))
\end{split}
\end{equation*}
as claimed.

%and so in this case
%\begin{equation}\label{elt claim}
%\Var_Q(\ES) = \frac{q^n}{\Phi(Q)} \ave{|\alpha|^2}_n (1+O(q^{-1})),
%\quad n<\deg Q
%\end{equation}

%\section{A general formula for the variance in short intervals}

%************
%\newpage

\section{Variance in short intervals: General theory}

%\subsection{Short intervals}

 %For $A\in \mathcal M_n$ of
%degree $n$, and $h<n$, we define a ``short interval around $A$ of
%length $q^h$'' by
%\end{equation}
%where the norm of a polynomial $0\neq f\in \fq[t]$ is
%$||f||:=q^{\deg f}$.
%Note that
%$$ \#I(A;h) = q^{h+1}=:H \;. $$

Given an arithmetic function $\alpha:\fq[t]\to \C$,
%be a function on polynomials, which is even, symmetric and weakly multiplicative.
define its sum on short intervals as
\begin{equation}
\EN_\alpha(A;h)  = \sum_{ f\in I(A;h) }\alpha(f) \;.
\end{equation}
The mean value of $\EN_\alpha$ is (see Lemma~\ref{lem:Mean value})
\begin{equation}
\ave{\EN_\alpha(\bullet, h)} = q^{h+1}\ave{\alpha}_n =H\ave{\alpha}_n \;. 
\end{equation}
Our goal will be to compute the variance of $\EN_\alpha$,
\begin{equation}
\Var{\EN_\alpha} = \frac 1{q^n}\sum_{A\in \mathcal M_n}
|\EN_\alpha(A) - \ave{\EN_\alpha}|^2
\end{equation}
and more generally, given two such functions $\alpha,\beta$, to
compute the covariance
 \begin{equation}
\Cov(\EN_\alpha,\EN_\beta)
=\ave{\Big(\EN_\alpha-\ave{\EN_\alpha}\Big)\Big(\EN_\beta-\ave{\EN_\beta}\Big)} \;.
\end{equation}

\subsection{Background on short intervals (see  \cite{KR})}
\label{sec:backgd on si}
 %Let $\fq$ be a finite field of $q$ elements and $\fq[t]$ the ring of polynomials with
%coefficients in $\fq$.
Let
\begin{equation}
\mathcal P_n=\{f\in \fq[t]:\deg f=n\}
\end{equation}
be the set of polynomials of degree $n$,
\begin{equation}
  \mathcal P_{\leq n} = \{0\} \cup \bigcup _{0\leq m\leq n}\mathcal P_m
\end{equation}
the space of polynomials of degree at most $n$ (including $0$),
 and $\mathcal M_n\subset \mathcal P_n$ the subset of monic polynomials.

%The norm of a polynomial $0\neq f\in \fq[t]$ is
%\begin{equation}
%||f||:=q^{\deg f}
%\end{equation}
%For $A\in \mathcal P_n$ of degree $n$, and $h<n$, we define a
%``short interval around $A$ of length $q^h$'' by
%\begin{equation}
%I(A;h):=\{f: ||f-A||\leq q^h \} = \{f: \deg(f-A)\leq h \}
%\end{equation}
%so that
By definition of short intervals,
\begin{equation}
I(A;h)=A+  \mathcal P_{\leq h}\;,
\end{equation}
and hence
\begin{equation}
\#I(A;h) = q^{h+1}=:H\;.
\end{equation}

For $h=n-1$, $I(A;n-1) = \EM_n$ is the set of all monic polynomials
of degree $n$.  For $h\leq n-2$, if $||f-A||\leq q^h$ then $\deg
f=\deg A $ and $A$ is monic if and only if $f$ is monic. Hence for
$A$ monic, $I(A;h)$ consists of only monic polynomials and all monic
$f$'s of degree $n$ are contained in one of the intervals $I(A;h)$
with $A$ monic of degree $n$. Moreover,
 \begin{equation}\label{coincidence}
   I(A_1;h)\bigcap I(A_2;h)\neq \emptyset \leftrightarrow
   \deg(A_1-A_2)\leq h \leftrightarrow I(A_1;h) = I(A_2;h)
 \end{equation}
and we get a partition of $\mathcal P_n$ into disjoint ``intervals''
parameterized by $B\in \mathcal P_{n-(h+1)}$:
\begin{equation}
  \mathcal P_n = \coprod_{B\in  \mathcal P_{n-(h+1)}} I(t^{h+1}B;h)
\end{equation}
and likewise for monics (recall $h\leq n-2$):
\begin{equation}
  \mathcal M_n = \coprod_{B\in  \mathcal M_{n-(h+1)}} I(t^{h+1}B;h)
\end{equation}

\subsection{An involution}
Let $n\geq 0$. We define a map $\inv_n: \mathcal P_{\leq n}\to
\mathcal P_{\leq n}$ by
$$ \inv_n(f)(t) = t^n f(\frac 1t)$$
which takes $f(t) = f_0+f_1t+\dots +f_n t^n$, $n=\deg f$ to the
``reversed'' polynomial
\begin{equation}
  \inv_n(f)(t) = f_0t^n+f_1t^{n-1}+\dots +f_n\;.
\end{equation}
 For $0\neq f\in \fq[t]$ we define
\begin{equation}
  f^*(t):=t^{\deg f} f(\frac 1t)
\end{equation}
so that $\inv_n(f) = f^*$ if $f(0)\neq 0$. Note that if $f(0)=0$
then this is false, for example $(t^k)^*=1$ but $\inv_n(t^k) =
t^{n-k}$ if $k\leq n$.

We have  $\deg \inv_n(f)\leq n$ with equality if and only if
$f(0)\neq 0$.  Moreover for $f\neq 0$, $f^*(0)\neq 0$ and $f(0)\neq
0$ if and only if $\deg f^* = \deg f$. Restricted to polynomials
which do not vanish at $0$, equivalently are co-prime to $t$, then
$*$ is an involution:
\begin{equation}
f^{**} = f, \quad  f(0)\neq 0\;.
\end{equation}
We  also have multiplicativity:
\begin{equation}
  (fg)^* = f^* g^*\;.
\end{equation}

 The map $\inv_m$ gives a bijection
\begin{equation}\label{eq:biject1}
  \begin{split}
  \inv_m: \mathcal M_{m} & \to
  \{  C \in \mathcal P_{\leq m}: C(0) =1 \}\\
     B &\mapsto  \inv_m(B)
  \end{split}
\end{equation}
with polynomials of degree $\leq m$ with constant term $1$. Thus as
$B$ ranges over $\mathcal M_{m}$, $\inv_m(B)$ ranges over all
invertible residue class $C \mod t^{m+1}$ so that $C(0)=1$.

\subsection{Short intervals as arithmetic progressions modulo $t^{n-h}$}
Suppose $h\leq n-2$. Define the arithmetic progression
\begin{equation}
\mathcal P_{\leq n}(t^{n-h};C) =\{ g\in \mathcal P_{\leq n}: g\equiv
C \mod t^{n-h} \} = C+t^{n-h} \mathcal P_{\leq h} \;.
\end{equation}
Note that the progression contains $q^{h+1}$ elements.
\begin{lem}\label{lem:bijection}
Let $h\leq n-2$ and $B\in \mathcal M_{n-h-1}$. Then the  map
$\inv_n$ takes the ``interval" $I(t^{h+1}B;h)$ bijectively onto the
arithmetic progression $\mathcal P_{\leq
n}(t^{n-h};\inv_{n-h-1}(B))$, with those $f\in I(t^{h+1}B;h)$ such
that $f(0)\neq 0$ mapping onto those $g\in \mathcal P_{\leq
n}(t^{n-h};\inv_{n-h-1}(B))$ of degree exactly $n$.
\end{lem}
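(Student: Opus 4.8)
The plan is to unwind the definitions and verify the stated bijection directly, using the multiplicativity and degree bookkeeping of the reversal map $\inv_n$ already set up in the excerpt. First I would fix $B\in\mathcal M_{n-h-1}$ and write a general element of the interval as $f = t^{h+1}B + g$ with $g\in\mathcal P_{\leq h}$, so that $f$ has degree $n$ and leading coefficient $1$. Applying $\inv_n$ gives $\inv_n(f)(t) = t^n f(1/t) = t^n\big((1/t)^{h+1}B(1/t) + g(1/t)\big) = t^{n-h-1}B(1/t) + t^n g(1/t)$. Since $B$ is monic of degree $n-h-1$ we have $t^{n-h-1}B(1/t) = \inv_{n-h-1}(B)$, a polynomial of degree $\leq n-h-1$ with constant term $1$ (by \eqref{eq:biject1}); and since $\deg g\leq h$, the term $t^n g(1/t)$ is divisible by $t^{n-h}$. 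Hence $\inv_n(f) \equiv \inv_{n-h-1}(B) \bmod t^{n-h}$, so $\inv_n$ maps $I(t^{h+1}B;h)$ into $\mathcal P_{\leq n}(t^{n-h};\inv_{n-h-1}(B))$.

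Next I would check this map is a bijection between two sets of the same cardinality. Both the interval $I(t^{h+1}B;h)$ and the progression $\mathcal P_{\leq n}(t^{n-h};\inv_{n-h-1}(B)) = \inv_{n-h-1}(B) + t^{n-h}\mathcal P_{\leq h}$ have exactly $q^{h+1}$ elements, so it suffices to prove injectivity. This follows because $\inv_n$ restricted to $\mathcal P_{\leq n}$ is injective up to the caveat about vanishing at $0$: concretely, $\inv_n$ is an involution on the set of $f\in\mathcal P_{\leq n}$ with $\deg f = n$, hence injective there, and all our $f$ have degree exactly $n$. Alternatively one reads injectivity straight off the coefficient formula $\inv_n(f)(t)=f_0t^n+\dots+f_n$, which is a linear bijection of the coefficient vectors on $\mathcal P_n$. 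So $\inv_n$ is a bijection from the interval onto the progression.

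Finally, for the refinement about degrees: by the general properties of the reversal map, for $f$ of degree exactly $n$ one has $\deg\inv_n(f) = n$ if and only if $f(0)\neq 0$, and otherwise $\deg\inv_n(f) < n$. Thus the $f\in I(t^{h+1}B;h)$ with $f(0)\neq 0$ are precisely those whose image $g=\inv_n(f)$ has $\deg g = n$; and since the image already lies in $\mathcal P_{\leq n}(t^{n-h};\inv_{n-h-1}(B))$, these correspond exactly to the degree-$n$ elements of the progression. This gives the claimed correspondence.

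I do not expect any serious obstacle here; the statement is essentially a reorganization of the $*$-calculus and degree formulas already recorded in \S\ref{sec:backgd on si}. The only point requiring a little care is keeping track of the two slightly different normalizations of the reversal — $\inv_m$ versus $f^*$ — and in particular making sure that $\inv_{n-h-1}(B)$ (not $B^*$) is the right residue, which is exactly where the hypothesis $B\in\mathcal M_{n-h-1}$ is used.
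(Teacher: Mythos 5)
Your proposal is correct and follows essentially the same route as the paper: compute $\inv_n(t^{h+1}B+g)$ to see it lands in the progression $\inv_{n-h-1}(B)\bmod t^{n-h}$, then conclude bijectivity from the equal cardinalities $q^{h+1}$ together with the injectivity of coefficient reversal on $\mathcal P_{\leq n}$, with the degree refinement read off from the fact that $\deg\inv_n(f)=n$ iff $f(0)\neq 0$.
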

\begin{proof}
We first check that $\inv_n$ maps the interval $I(t^{h+1}B;h)$  to
the arithmetic progression $\mathcal P_{\leq
n}(t^{n-h};\inv_{n-h-1}(B))$. Indeed if  $B=b_0+\dots
+b_{n-h-1}t^{n-h-1}$, with $b_{n-h-1}=1$, and $f=f_0+\dots
+f_nt^n\in I(t^{h+1}B;h)$ then
\begin{equation}
f=f_0 +\dots + f_ht^h + t^{h+1}(b_0+\dots +b_{n-h-1}t^{n-h-1})
\end{equation}
so that
\begin{equation}
\begin{split}
\inv_n(f) &=f_0t^n+\dots+f_ht^{n-h} + b_0t^{n-h-1}+\dots +
b_{n-h-1}\\
&= \inv_{n-h-1}(B)\mod t^{n-h} \;.
\end{split}
\end{equation}
Hence $\inv_n(f)\in\mathcal P_{\leq n}(t^{n-h};\inv_{n-h-1}(B))$.

Now the map $\inv_n: \mathcal P_{\leq n}\to \mathcal P_{\leq n}$ is
a bijection, and both $I(t^{h+1}B;h)$ and $\mathcal P_{\leq
n}(t^{n-h};\inv_{n-h-1}(B))$   have size $q^{h+1}$, and therefore
$\inv_n:I(t^{h+1}B;h) \to \mathcal P_{\leq n}(t^{n-h};B^*) $ is a
bijection.
\end{proof}

\subsection{The mean value}

%Let $\alpha:\fq[t]\to \C$ be a function on polynomials. For $h\leq
%n-2$, $A\in \mathcal M_n$, define the sum of $\alpha$ on the short
%interval $I(A;h)$   by
%\begin{equation}
%\EN_\alpha(A;h)  = \sum_{f\in I(A;h)}\alpha(f)\;.
%\end{equation}

%\begin{verbatim} no condition on f(0)  !!!!\end{verbatim}

\begin{lem} \label{lem:Mean value}
The mean value of
$\EN_\alpha(\bullet;h)$ over $\EM_n$ is
\begin{equation}
 \ave{\EN_\alpha (\bullet;h)}=q^{h+1} \frac 1{q^n} \sum_{f\in \EM_n} \alpha(f) = H \ave{\alpha}_n \;.
\end{equation}
\end{lem}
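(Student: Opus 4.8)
The plan is to evaluate $\sum_{A\in\EM_n}\EN_\alpha(A;h)$ directly by interchanging the order of summation, using the partition of $\EM_n$ into short intervals recorded in Section~\ref{sec:backgd on si}. First I would write
\begin{equation*}
\sum_{A\in\EM_n}\EN_\alpha(A;h) = \sum_{A\in\EM_n}\ \sum_{f\in I(A;h)}\alpha(f).
\end{equation*}
The key combinatorial input is \eqref{coincidence}: each monic $f$ of degree $n$ lies in exactly one interval $I(A;h)$ among monic centres, but the \emph{same} interval is represented by $q^{h+1}=H$ different centres $A\in\EM_n$ (namely all $A$ with $\deg(A-f)\le h$, equivalently $A\in I(f;h)$). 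Hence in the double sum each polynomial $f\in\EM_n$ is counted once for each of the $H$ centres $A$ whose interval contains it, so the double sum equals $H\sum_{f\in\EM_n}\alpha(f)$. Dividing by $q^n$ gives
\begin{equation*}
\ave{\EN_\alpha(\bullet;h)} = \frac{1}{q^n}\sum_{A\in\EM_n}\EN_\alpha(A;h) = \frac{H}{q^n}\sum_{f\in\EM_n}\alpha(f) = H\ave{\alpha}_n,
\end{equation*}
as claimed.

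An equivalent and perhaps cleaner bookkeeping is to use the disjoint partition $\EM_n=\coprod_{B\in\EM_{n-h-1}}I(t^{h+1}B;h)$ together with the fact that the functions $A\mapsto\EN_\alpha(A;h)$ and the set $I(A;h)$ depend only on the class of $A$ modulo polynomials of degree $\le h$, i.e. only on $B$. Then
\begin{equation*}
\sum_{A\in\EM_n}\EN_\alpha(A;h) = \sum_{B\in\EM_{n-h-1}} H\cdot \EN_\alpha(t^{h+1}B;h) = H\sum_{B\in\EM_{n-h-1}}\ \sum_{f\in I(t^{h+1}B;h)}\alpha(f) = H\sum_{f\in\EM_n}\alpha(f),
\end{equation*}
the last equality again by the partition. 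I do not expect any genuine obstacle here: the only thing to be careful about is the hypothesis $h\le n-2$ (so that intervals centred at monics consist of monics and the partition statements apply); the degenerate case $h=n-1$ gives $I(A;n-1)=\EM_n$ and the identity is trivially $\ave{\EN_\alpha}=q^n\ave{\alpha}_n=H\ave{\alpha}_n$ with $H=q^n$, consistent with the formula. So the ``hard part'' is merely making the multiplicity count explicit, which \eqref{coincidence} hands us directly.
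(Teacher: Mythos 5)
Your proof is correct and, in its second formulation, is essentially identical to the paper's own argument: the paper computes the mean by averaging $\EN_\alpha(t^{h+1}B;h)$ over the $q^{n-h-1}$ representatives $B\in\EM_{n-h-1}$ and invoking the partition $\EM_n=\coprod_B I(t^{h+1}B;h)$. Your first formulation (double counting over all centres $A\in\EM_n$, each $f$ appearing for exactly the $H$ centres $A\in I(f;h)$) is an equivalent bookkeeping of the same fact, and your remark about the hypothesis $h\le n-2$ is apt.
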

%Note that if $\alpha$ is even, then we may replace the mean over
%monic polynomials $\EM_n$ of degree $n$ by the mean over all
%polynomials of degree $n$. \marginpar{define "even"?}
\begin{proof}
  From the definition, we have
\begin{equation}
\begin{split}
  \ave{\EN_\alpha (\bullet;h)} &= \frac 1{\#\mathcal M_{n-h-1}} \sum_{B\in
    \mathcal M_{n-h-1}}
\EN_\alpha(t^{h+1}B;h) \\
&=\frac 1{q^{n-h-1}}\sum_{B\in
    \mathcal M_{n-h-1}} \sum_{f\in I(t^{h+1}B;h)} \alpha(f) \\
&=q^{h+1} \frac 1{q^n} \sum_{f\in \EM_n} \alpha(f) = q^{h+1}
\ave{\alpha}_n \;.
\end{split}
\end{equation}
\end{proof}

\subsection{A class of arithmetic functions}
%Let $\fq$ be a finite field of $q$ elements and $\fq[t]$ the ring of
%polynomials with coefficients in $\fq$. Let    $\mathcal M_n$ be the
%set of monic polynomials of degree $n$. For a nonzero polynomial
%$f$, set $|f|:=q^{\deg f}$.

Let $\alpha:\fq[t]\to \C$ be a function on polynomials, which is:

\begin{itemize}
\item
 {\bf Even}  in the sense that
\begin{equation*}
 \alpha(cf) = \alpha(f),\quad c\in \fq^\times \;.
\end{equation*}

\item {\bf Multiplicative}, that is $\alpha(fg) = \alpha(f)\alpha(g)$ if $f$ and $g$ are co-prime.
In fact we will only need a weaker condition , ``weak  multiplicativity":
  If  $f(0)\neq 0$,  i.e.  $\gcd(f,t)=1$ then
\begin{equation*}
\alpha(t^kf) = \alpha(t^k) \alpha(f), \quad f(0)\neq 0 \;.
\end{equation*}

\item
{\bf Bounded}, that is it satisfies the growth condition
\begin{equation*}
\max_{f\in \EM_n} |\alpha(f)|\leq A_n
\end{equation*}
independently of $q$.

\item
 {\bf  Symmetric} under the  map  $f^*(t):=t^{\deg f}f(\frac 1t)$,
\begin{equation*}\label{inv under*}
\alpha(f^*) = \alpha(f), \quad f(0)\neq 0 \;.
\end{equation*}

\end{itemize}

Examples are the M\"obius function $\mu$, its square $\mu^2$ which
is the indicator function of squarefree-integers,
%the indicator function of the $r$-free integers,
and the divisor functions (see \cite{KRR}).

Note that multiplicativity (and the ``weak multiplicativity" condition) excludes the case of
the von Mangoldt function, treated in \cite{KR}  where we
are counting prime polynomials in short intervals or arithmetic
progressions. A related case of almost primes was treated by Rodgers
\cite{Rodgers}.

\subsection{A formula for $\EN_\alpha(A;h)$ }

%Consequently from Lemmas~\ref{Gamma in terms of Psi} and \ref{lem:Mean value} we find that  the centered variable is

We present a useful formula for the short interval sums $\EN_\alpha(\bullet,h)$ in terms of sums over even Dirichlet characters modulo $t^{n-h}$.
Recall that a Dirichlet character $\chi$  is ``even" if $\chi(cf) = \chi(f)$ for all scalars $c\in \fq^\times$, and we say that $\chi$ is ``odd" otherwise.   The number of even characters modulo
$t^m$ is $\Phi_{ev}(t^m)=q^{m-1}$.
We denote by $\chi_0$ the trivial character.

\begin{lem}\label{Gamma in terms of Psi}
If $\alpha:\fq[t]\to \C$ is even, symmetric and weakly
multiplicative, and $0\leq h\leq n-2$,  then for all $B\in
\EM_{n-h-1}$,
\begin{multline}\label{centered gamma}
\EN_\alpha(t^{h+1}B;h)=\ave{\EN_\alpha (\bullet;h)} \\
+ \frac{1}{\Phi_{ev}(t^{n-h})} \sum_{m=0}^n \alpha(t^{n-m})
\sum_{\substack{\chi \bmod t^{n-h}\\\chi\neq \chi_0\mbox{ even}}}
\bar\chi(\inv_{n-h-1}(B)) \EM(m;\alpha\chi)
\end{multline}
%
%\begin{multline}\label{eq:Gamma in terms of Psi}
%\EN_\alpha(t^{h+1}B;h) = q^{h+1} \frac 1{q^n }\sum_{f\in \mathcal
%M_n} \alpha(f) \\ + \frac {1}{\Phi_{ev}(t^{n-h})}
 % \sum_{\substack{\chi \bmod t^{n-h}\\\chi \neq \chi_0 \,{\rm even}}} \bar\chi(\inv_{n-h-1}(B))
%\sum_{m=0}^n \alpha(t^{n-m}) \EM(m;\alpha\chi)
%\end{multline}
where
\begin{equation}
\EM(n;\alpha\chi)=\sum_{f\in \mathcal M_n}\alpha(f)\chi(f) \;.
\end{equation}
\end{lem}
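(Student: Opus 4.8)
The plan is to combine the involution of Lemma~\ref{lem:bijection}, which converts the short interval $I(t^{h+1}B;h)$ into the arithmetic progression $\mathcal P_{\leq n}(t^{n-h};\inv_{n-h-1}(B))$ modulo $t^{n-h}$, with the Dirichlet-character machinery already developed for arithmetic progressions in Section~4. The only wrinkle is that here we are summing over polynomials of degree \emph{at most} $n$ (not exactly $n$), and we want to detect the congruence using only \emph{even} characters, so the symmetry and weak multiplicativity hypotheses on $\alpha$ must be used to relate $\alpha(f)$ to $\alpha$ of the reversed polynomial.

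First I would write, using Lemma~\ref{lem:bijection},
\begin{equation*}
\EN_\alpha(t^{h+1}B;h)=\sum_{f\in I(t^{h+1}B;h)}\alpha(f)
=\sum_{\substack{g\in\mathcal P_{\leq n}\\ g\equiv \inv_{n-h-1}(B)\bmod t^{n-h}}}\alpha(\inv_n^{-1}(g)),
\end{equation*}
and then decompose the sum over $g$ according to $\deg g$: if $\deg g=m$ write $g=t^{?}\cdot(\text{coprime-to-}t\text{ part})$ — more precisely, split off the power of $t$ dividing the original $f$. The cleanest bookkeeping is to note that $f\in I(t^{h+1}B;h)$ with $\inv_n(f)=g$ of degree $m$ forces $f=t^{n-m}\tilde f$ with $\tilde f(0)\neq 0$ of degree $m$ and $\tilde f^*\equiv$ (the relevant residue), so by weak multiplicativity $\alpha(f)=\alpha(t^{n-m})\alpha(\tilde f)$, and by symmetry $\alpha(\tilde f)=\alpha(\tilde f^{*})$. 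Hence the sum reorganizes as $\sum_{m=0}^n \alpha(t^{n-m})\sum_{h\in\mathcal M_m,\ h^*\equiv \inv_{n-h-1}(B)\bmod t^{n-h}}\alpha(h)$, where the inner sum is a genuine arithmetic-progression sum over monic polynomials of degree exactly $m$.

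Next I would detect the congruence $h^*\equiv \inv_{n-h-1}(B)\bmod t^{n-h}$ by orthogonality of Dirichlet characters modulo $t^{n-h}$. Because $\alpha$ is even and we are summing over monic $h$ of a fixed degree, one checks that only \emph{even} characters contribute (odd characters sum to zero against an even $\alpha$ restricted to a fixed-degree monic set), and that the change of variable $h\mapsto h^*$ on characters is absorbed into replacing $\chi$ by its image under the involution, which again preserves evenness; one also uses the bijection \eqref{eq:biject1} to identify $\inv_{n-h-1}(B)$ as running over invertible residues with constant term $1$. The trivial character term reassembles, via Lemma~\ref{lem:Mean value}, to $\ave{\EN_\alpha(\bullet;h)}$, and the nontrivial even characters give exactly $\frac{1}{\Phi_{ev}(t^{n-h})}\sum_{m=0}^n\alpha(t^{n-m})\sum_{\chi\neq\chi_0\text{ even}}\bar\chi(\inv_{n-h-1}(B))\EM(m;\alpha\chi)$, which is \eqref{centered gamma}.

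The main obstacle is the careful handling of the polynomials $g$ of degree strictly less than $n$ (equivalently, the $f\in I(t^{h+1}B;h)$ divisible by a positive power of $t$): one must verify that the factorization $f=t^{n-m}\tilde f$ is compatible with membership in the interval, that weak multiplicativity applies (it does, precisely because $\tilde f(0)\neq 0$), and that after passing to $\tilde f^*$ the resulting congruence modulo $t^{n-h}$ is the correct one for every $m$, including the degenerate cases $m=0$ and $m$ small where $\tilde f^*$ has low degree. Once this degree-stratification is set up correctly, the remaining steps are routine orthogonality manipulations of the kind already carried out in \cite{KR} and in Section~4.
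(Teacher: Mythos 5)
Your proposal follows essentially the same route as the paper's proof: stratify $f=t^{n-m}f_1$ with $f_1(0)\neq 0$, apply weak multiplicativity and the symmetry $\alpha(f_1)=\alpha(f_1^*)$ together with Lemma~\ref{lem:bijection}, detect the congruence modulo $t^{n-h}$ by character orthogonality, use evenness of $\alpha$ (summed over all scalar multiples of monics of degree $m$) to kill the odd characters and produce the factor $\Phi(t^{n-h})/(q-1)=\Phi_{ev}(t^{n-h})$, and identify the $\chi_0$ term with the mean via Lemma~\ref{lem:Mean value}. This matches the paper's argument step for step, so nothing further is needed.
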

%We note that $\Phi_{ev}(t^m)=q^{m-1}$.
\begin{proof}
 Writing each $f\in \mathcal M_n$ uniquely as $f=t^{n-m}f_1$ with $f_1\in \mathcal M_m$ and $f_1(0)\neq
 0$, for which $\inv_n(f) = \inv_m(f_1) = f_1^*$,
 we obtain, using (weak) multiplicativity, 
\begin{equation}
\begin{split}
\EN_\alpha(t^{h+1}B;h) & = \sum_{m=0}^n \sum_{\substack{f_1\in
\mathcal M_m
\\  f_1(0) \neq 0 \\ t^{n-m}f_1 \in I(t^{h+1}B;h) }}
\alpha(t^{n-m} f_1 )  \\
&=\sum_{m=0}^n   \alpha(t^{n-m}) \sum_{\substack{f_1\in \mathcal M_m \\
f_1(0) \neq 0 \\ t^{n-m}f_1 \in I(t^{h+1}B;h) }} \alpha(f_1 ) \;.
\end{split}
\end{equation}
Since $f_1(0)\neq 0$, we have  that $f_1^*=\inv_m(f_1)$ runs over all
polynomials $g$ of degree $m$ (not necessarily monic) so that
$g\equiv  \inv_{n-h-1}(B) \mod t^{n-h}$ by
Lemma~\ref{lem:bijection}, and moreover $\alpha(f_1) =
\alpha(f_1^*)=\alpha(\inv_m(f_1))$. Hence
\begin{equation}
%\begin{split}
\EN_\alpha(t^{h+1}B;h)  = \sum_{m=0}^n   \alpha(t^{n-m})
\sum_{\substack{\deg g=m\\ g\equiv \inv_{n-h-1}(B) \mod t^{n-h}}}
\alpha(g) \;.
%\end{split}
\end{equation}

 Using characters to pick out the conditions
$g\equiv  \inv_{n-h-1}(B)\bmod t^{n-h}$ (note that since $B$ is
monic,  $\inv_{n-h-1}(B)$ is coprime to $t^{n-h}$) gives
\begin{equation}
  \sum_{\substack{ \deg g=m \\ g\equiv \inv_{n-h-1}(B)\mod t^{n-h}}}
\alpha(g)= \frac 1{\Phi(t^{n-h})} \sum_{\chi \bmod t^{n-h}}
\bar\chi(\inv_{n-h-1}(B)) \tilde \EM(m;\alpha\chi)
\end{equation}
where
\begin{equation}
\tilde \EM(m;\alpha\chi)=\sum_{ \deg g=m } \chi(g)\alpha(g) \;,
\end{equation}
the sum running over all $g$ of degree $m$.

Since $\alpha$ is even, we find that
\begin{equation*}
\begin{split}
\tilde \EM(m;\alpha\chi)&=\sum_{f\in \mathcal M_m}\sum_{c \in
\fq^\times} \chi(cf)\alpha(cf)\\
& =\sum_{f\in \mathcal M_m}\alpha(f)\chi(f)\sum_{c \in \fq^\times}
\chi(c) \\
&= \begin{cases} (q-1)\sum_{f\in \mathcal M_m}\alpha(f)\chi(f),&
\chi \; {\rm even}\\0,&\chi\;{\rm odd}\end{cases}
\end{split}
\end{equation*}
where now the sum is over monic polynomials of degree $m$.

Thus we get, on noting that $\Phi(t^{n-h})/ (q-1)=
\Phi_{ev}(t^{n-h})$, that
\begin{equation}
  \sum_{\substack{ \deg g=m\\ g\equiv \inv_{n-h-1}(B) \mod t^{n-h}}}
\alpha(g)  = \frac {1}{\Phi_{ev}(t^{n-h})} \sum_{\substack{\chi
\bmod t^{n-h}\\\chi\mbox{ even}}} \bar\chi(\inv_{n-h-1}(B))
\EM(m;\alpha\chi) \;.
\end{equation}
 Therefore
\begin{equation}\label{intermediate var}
%\begin{split}
\EN_\alpha(t^{h+1}B;h)  = \sum_{m=0}^n   \alpha(t^{n-m})  \frac
{1}{\Phi_{ev}(t^{n-h})} \sum_{\substack{\chi \bmod
t^{n-h}\\\chi\mbox{ even}}} \bar\chi(\inv_{n-h-1}(B))
\EM(m;\alpha\chi) \;.
%\end{split}
\end{equation}

The trivial character $\chi_0$  contributes a term
\begin{equation}
\frac 1{\Phi_{ev}(t^{n-h})} \sum_{m=0}^n \sum_{\substack{g\in \mathcal M_m\\
g(0)\neq 0}} \alpha(t^{n-m})\alpha(g)= \frac
{q^{h+1}}{q^{n}}\sum_{f\in \mathcal M_n} \alpha(f)
\end{equation}
on using weak multiplicativity. Inserting this into
\eqref{intermediate var} and using Lemma~\ref{lem:Mean value} we
obtain the formula claimed.
\end{proof}

\subsection{Formulae for variance and covariance}
\label{sec:formulae for var}

Given an arithmetic function $\alpha$,  %satisfying \eqref{},
the variance of $\EN_\alpha$  is
 \begin{equation}
 \begin{split}
\Var(\EN_\alpha )
&=\ave{ |\EN_\alpha-\ave{\EN_\alpha}|^2} \\
&=\frac 1{q^{n-h-1}} \sum_{B\in \mathcal M_{n-h-1}}
|\EN_\alpha(t^{h+1}B;h)-\ave{\EN_\alpha }|^2
 \end{split}
\end{equation}
and likewise given two such functions $\alpha,\beta$,
%satisfying \eqref{},
the covariance of $\EN_\alpha$ and $\EN_\beta$ is
\begin{equation}
\Cov(\EN_\alpha,\EN_\beta)
=\ave{\Big(\EN_\alpha-\ave{\EN_\alpha}\Big)\Big(\EN_\beta-\ave{\EN_\beta}\Big)} \;.
\end{equation}

We use the following lemma, an extension of the argument of
\cite{KR}:
\begin{lem}\label{covariance lemma}
If $\alpha,\beta$ are even, symmetric and weakly multiplicative, and
$0\leq h\leq n-2$,  then
\begin{multline}
\Cov(\EN_\alpha,\EN_\beta) = \\   \frac {1}{\Phi_{ev}(t^{n-h})^2}
\sum_{\substack{\chi \bmod t^{n-h}\\\chi\neq \chi_0\, {\rm
even}}}\sum_{m_1,m_2=0}^n \alpha(t^{n-m_1})
\overline{\beta(t^{n-m_2}) }\EM(m_1;\alpha\chi)
 \overline{\EM(m_2;\beta\chi)}\;.
\end{multline}
\end{lem}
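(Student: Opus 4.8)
The plan is to insert the formula of Lemma~\ref{Gamma in terms of Psi} for $\EN_\alpha(t^{h+1}B;h)-\ave{\EN_\alpha}$ and for $\EN_\beta(t^{h+1}B;h)-\ave{\EN_\beta}$, multiply, and then carry out the average over $B\in\EM_{n-h-1}$ using orthogonality of even Dirichlet characters modulo $t^{n-h}$. Explicitly, by Lemma~\ref{Gamma in terms of Psi},
\[
\EN_\alpha(t^{h+1}B;h)-\ave{\EN_\alpha}=\frac{1}{\Phi_{ev}(t^{n-h})}\sum_{m_1=0}^{n}\alpha(t^{n-m_1})\sum_{\substack{\chi\bmod t^{n-h}\\\chi\neq\chi_0\text{ even}}}\bar\chi(\inv_{n-h-1}(B))\,\EM(m_1;\alpha\chi),
\]
and the complex conjugate of the corresponding identity for $\beta$, with indices $m_2$ and $\chi'$. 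Since
\[
\Cov(\EN_\alpha,\EN_\beta)=\frac{1}{q^{n-h-1}}\sum_{B\in\EM_{n-h-1}}\big(\EN_\alpha(t^{h+1}B;h)-\ave{\EN_\alpha}\big)\,\overline{\big(\EN_\beta(t^{h+1}B;h)-\ave{\EN_\beta}\big)},
\]
multiplying the two expressions produces a sum over $m_1,m_2\in\{0,\dots,n\}$, over even non-trivial $\chi,\chi'$ modulo $t^{n-h}$, and over $B$, in which all of the $B$-dependence is concentrated in the single factor $\bar\chi(\inv_{n-h-1}(B))\,\chi'(\inv_{n-h-1}(B))$.

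The heart of the argument is then the orthogonality relation
\[
\frac{1}{\Phi_{ev}(t^{n-h})}\sum_{B\in\EM_{n-h-1}}\bar\chi(\inv_{n-h-1}(B))\,\chi'(\inv_{n-h-1}(B))=\begin{cases}1,&\chi=\chi',\\0,&\chi\neq\chi',\end{cases}
\]
valid for even characters $\chi,\chi'$ modulo $t^{n-h}$. This follows from \eqref{eq:biject1}: as $B$ runs over $\EM_{n-h-1}$, the residues $\inv_{n-h-1}(B)$ run over exactly the invertible classes modulo $t^{n-h}$ with constant term $1$, which constitute a full set of coset representatives for $\fq^\times$ inside $(\fq[t]/t^{n-h})^\times$. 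An even character is by definition trivial on $\fq^\times$, hence descends to a character of the quotient group $(\fq[t]/t^{n-h})^\times/\fq^\times$, of order $\Phi_{ev}(t^{n-h})=q^{n-h-1}$, and the displayed sum is just the standard orthogonality relation on that finite abelian group. Plugging this in forces $\chi'=\chi$; the resulting factor $\Phi_{ev}(t^{n-h})$ cancels the $1/q^{n-h-1}$ prefactor of the $B$-average, and what remains is precisely the asserted identity, with its single prefactor $1/\Phi_{ev}(t^{n-h})^2$ coming from the two applications of Lemma~\ref{Gamma in terms of Psi}.

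No analytic input is needed beyond Lemma~\ref{Gamma in terms of Psi}; the computation is a bilinear version of the one performed for the variance in \cite{KR}. The only place demanding care is the orthogonality step: one must check that the ``even and non-trivial'' restriction on the characters survives the manipulation, that the constant-term-$1$ normalization of the $\inv_{n-h-1}(B)$ matches the quotient group whose characters are the even ones, and that the complex conjugates are consistently distributed (which is the reason the statement carries $\overline{\beta(t^{n-m_2})}$ and $\overline{\EM(m_2;\beta\chi)}$). Once these points are pinned down, everything else is a routine rearrangement of finite sums.
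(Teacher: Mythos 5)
Your proposal is correct and follows essentially the same route as the paper: expand both centered sums via Lemma~\ref{Gamma in terms of Psi}, multiply, and collapse the $B$-average by orthogonality of even characters over the residues $\inv_{n-h-1}(B)$ with constant term $1$. Your extra justification of the orthogonality step (even characters descending to the quotient $(\fq[t]/t^{n-h})^\times/\fq^\times$, for which the constant-term-$1$ classes are coset representatives) is a correct elaboration of what the paper cites from \cite{KR}.
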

%In many cases, one can in turn express $\EM(n;\alpha\chi) $ in terms
%of the Dirichlet L-function $L(u,\chi)$. Then one wants to express
%the character average in terms of a matrix integral, using Katz's
%equidistribution theorem \cite{KatzKR2}.

\begin{proof}
By Lemma~\ref{Gamma in terms of Psi}, $\Cov(\EN_\alpha,\EN_\beta)$
equals
\begin{multline*}
  \frac {1}{\Phi_{ev}(t^{n-h})^2}
\sum_{\substack{\chi_1,\chi_2 \bmod t^{n-h}\\\chi_1,\chi_2\neq
\chi_0\, {\rm even}}}\sum_{m_1,m_2=0}^n \alpha(t^{n-m_1})
\overline{\beta(t^{n-m_2}) }\EM(m_1;\alpha\chi_1)
 \overline{\EM(m_2;\beta\chi_2)}
\\ \times \frac 1{q^{n-h-1}}
\sum_{B\in \mathcal M_{n-h-1} }
\bar\chi_1(\inv_{n-h-1}(B))\chi_2(\inv_{n-h-1}(B)) \;.
\end{multline*}

As $B$ runs over the monic polynomials $\mathcal M_{n-h-1}$, the
image $\inv_{n-h-1}(B)$ runs over all polynomials $C \mod t^{n-h}$
with $C(0)=1$ (see \eqref{eq:biject1}). Thus
\begin{equation}
  \sum_{B\in \mathcal M_{n-h-1} }
\bar\chi_1(\inv_{n-h-1}(B))\chi_2(\inv_{n-h-1}(B)) =   \sum_{\substack{C\mod t^{n-h}\\
C(0)=1}}\bar\chi_1(C)\chi_2(C) \;.
\end{equation}

Since $\chi_1,\chi_2$ are both even, we may ignore the condition
$C(0)=1$ and use the orthogonality relation (recall
$\Phi_{ev}(t^{n-h}) = q^{n-h-1}$)  to get (see \cite[Lemma 3.2]{KR})
\begin{equation}
\frac 1{q^{n-h-1}} \sum_{\substack{C\mod t^{n-h}\\
C(0)=1}}\bar\chi_1(C)\chi_2(C) = \delta(\chi_1,\chi_2)
\end{equation}
so that
\begin{multline}
%\begin{split}
\Cov(\EN_\alpha,\EN_\beta) =\\    \frac {1}{\Phi_{ev}(t^{n-h})^2}
\sum_{\substack{\chi \bmod t^{n-h}\\\chi\neq \chi_0\, {\rm
even}}}\sum_{m_1,m_2=0}^n \alpha(t^{n-m_1})
\overline{\beta(t^{n-m_2}) }\EM(m_1;\alpha\chi)
 \overline{\EM(m_2;\beta\chi)}
%\end{split}
\end{multline}
as claimed.
\end{proof}

%\newpage

\section{Characters, L-functions and equidistribution}

Before applying the variance formulae of \S~\ref{sec:formulae for
var}, we  survey some background on Dirichlet characters, their
L-functions and recent equidistribution theorems due to N.~Katz.

\subsection{Background on Dirichlet characters and L-functions}

Recall that a Dirichlet character $\chi$  is ``even" if $\chi(cf) = \chi(f)$ for all scalars $c\in \fq^\times$,
and we say that $\chi$ is ``odd" otherwise.   The number of even characters modulo
$t^m$ is $\Phi_{ev}(t^m)=q^{m-1}$.
We denote by $\chi_0$ the trivial character.

%\subsection{Primitive characters}
A character $\chi$ is {\em primitive} if there is no proper divisor $Q'\mid
Q$ so that $\chi(F)=1$ whenever $F$ is coprime to $Q$ and $F=1\mod
Q'$. We denote by $\Phi_{prim}(Q)$ the number of primitive characters
modulo $Q$.
% we clearly have $\Phi(Q) =\sum_{D\mid Q} \Phi_{prim}(D)$
%and hence by M\"obius inversion,
%\begin{equation}
% \Phi_{prim}(Q) = \sum_{D\mid Q} \mu(D) \Phi(\frac QD)
%\end{equation}
%the sum over all monic polynomials dividing $Q$. Therefore
%\begin{equation}
%\left| \frac{\Phi_{prim}(Q)}{\Phi(Q)}-1 \right| \leq \frac{2^{\deg
%Q}}{q} \;.
%\end{equation}
As $q\to \infty$, almost all characters are primitive in the
sense that
\begin{equation}
\frac{\Phi_{prim}(Q)}{\Phi(Q)} = 1 +O(\frac 1q)\;,
\end{equation}
the implied constant depending only on $\deg Q$.

Moreover, as $q\to \infty$ with $\deg Q$ fixed, almost
all characters are primitive and odd:
\begin{equation}
\frac{\Phi_{prim}^{odd}(Q)}{\Phi(Q)} = 1 +O(\frac 1q)\;,
\end{equation}
the implied constant depending only on $\deg Q$.

One also has available similar information about the number $\Phi_{prim}^{ev}(Q)$ of
even primitive characters. What we will need to note is that for $Q(t) = t^m$, $m\geq 2$,
\begin{equation}
\Phi_{prim}^{ev}(t^m) = q^{m-2}(q-1)\;.
\end{equation}

%\subsection{L-functions}

The L-function $L(u,\chi)$ attached to $\chi$ is defined as
\begin{equation}\label{Def of L}
 L(u,\chi) = \prod_{P\nmid Q} (1-\chi(P)u^{\deg P})^{-1}
\end{equation}
where the product is over all monic irreducible polynomials in
$\fq[t]$. The product is absolutely convergent for $|u|<1/q$. If
$\chi=\chi_0$ is the trivial character modulo $Q$, then
\begin{equation}
L(u,\chi_0) = Z(u) \prod_{P\mid Q} (1-u^{\deg P})\;,
\end{equation}
where
$$
Z(u) = \prod_{P\;{\rm prime}} (1-u^{\deg P})^{-1} = \frac 1{1-qu}
$$
 is the zeta function of $\fq[t]$. Also set $\zeta_q(s):=Z(q^{-s})$.

If $Q\in \fq[t]$ is a polynomial of degree $\deg Q\geq 2$, and
$\chi\neq \chi_0$  a nontrivial character mod $Q$, then the
L-function $L(u,\chi)$ is  a polynomial in $u$ of degree %at most
$\deg Q-1$.
 Moreover, if $\chi$ is an ``even'' character,
then there is a "trivial" zero at $u=1$.

We may factor $ L(u,\chi)$  in terms of the inverse roots
\begin{equation}
L(u,\chi) =\prod_{j=1}^{\deg Q-1}(1-\alpha_j(\chi)u) \;.
\end{equation}
%In particular, the coefficient $A_1(\chi)$ of $u$ is
%$$\sum_{\deg P=1}\chi(P) = -\sum_{j=1}^{\deg Q-1} \alpha_j(\chi)\;.$$
%where the sum is over monic polynomials of degree one (necessarily
%prime).
The Riemann Hypothesis, proved by Andre Weil (1948),
%(some cases were proved  previously by Hasse in the 1930's),
is that for each (nonzero) inverse root, either $\alpha_j(\chi)=1$
or
\begin{equation}\label{eqRHWeil}
|\alpha_j(\chi)| = q^{1/2} \;.
\end{equation}

If $\chi$ is a {\em primitive} and {\em odd} character modulo $Q$, then all inverse roots $\alpha_j$ have absolute value $\sqrt{q}$, and for $\chi$ primitive and {\em even} the same holds except for the trivial zero at $1$. We then
  write the nontrivial inverse roots as $\alpha_j=q^{1/2}e^{i\theta_j}$ and define a unitary matrix
\begin{equation}
 \Theta_\chi =  \mbox{diag}(e^{i\theta_1},\dots,e^{i\theta_N} )\;.
\end{equation}
which determines a unique conjugacy class in the unitary group $U(N)$, where
  $N=\deg Q-1$ for $\chi$ odd, and  $N=\deg Q-2$ for $\chi$ even.
The unitary matrix $\Theta_\chi$ (or rather, the conjugacy class of
unitary matrices) is called the unitarized Frobenius matrix of
$\chi$.

\subsection{Katz's equidistribution theorems}

Crucial ingredients in our results on the variance are equidistribution and independence results
for the Frobenii $\Theta_\chi$ due to N.~Katz.

\begin{thm} \label{thm:KatzKR even}
i) \cite{KatzKR2}
Fix\footnote{If the characteristic of $\fq$ is different than $2$ or
  $5$ then the result also holds for $m=3$.} $m\geq 4$. The unitarized
Frobenii $\Theta_\chi$ for the family of even primitive characters
mod $T^{m+1}$ become equidistributed in the projective unitary group
$PU(m-1)$ of size $m-1$, as $q\to \infty$.

ii) \cite{KatzKR3} If  $m\geq 5$ and in addition the $q$'s are
coprime to $6$, then
 the set of pairs of conjugacy classes $(\Theta_\chi, \Theta_{\chi^2})$
become equidistributed in the space of conjugacy classes of the product $PU(m-1)\times PU(m-1)$.
\end{thm}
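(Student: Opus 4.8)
The plan is to follow Deligne's equidistribution method coupled with a ``big monodromy'' computation, which is the strategy of Katz in \cite{KatzKR2,KatzKR3}; since the statement is quoted from his work, what I describe is the shape of that argument. \emph{Step 1 (geometric packaging).} An even character $\chi$ modulo $t^{m+1}$ is a character of $(\fq[t]/t^{m+1})^\times/\fq^\times$, and via Artin--Schreier--Witt theory such a character is encoded by a Witt-vector datum. As this datum varies over the $\FF_p$-points of a suitable affine parameter scheme $X$ one obtains a single lisse $\overline{\Q_\ell}$-sheaf $\mathcal{F}$ on $X$, of rank $m-1$ and pure of weight one, such that at the point corresponding to a primitive $\chi$ the geometric Frobenius has eigenvalues $q^{1/2}e^{i\theta_j}$, so that after the standard unitary normalization its conjugacy class is exactly $\Theta_\chi$. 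Since $L(u,\chi)=\det(1-u\,\mathrm{Frob}_\chi\mid\mathcal{F})$ away from the trivial zero, the whole problem reduces to understanding the monodromy of $\mathcal{F}$.

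\emph{Step 2 (big monodromy and part (i)).} I would compute the geometric monodromy group $G_{\mathrm{geom}}\subseteq GL_{m-1}$ of $\mathcal{F}$ and show that its image in $PGL_{m-1}$ is everything. The standard route: check that $\mathcal{F}$ is geometrically irreducible, Lie-irreducible (not geometrically induced from a proper subgroup), and neither tensor-decomposable nor of the exceptional small-monodromy types, and then apply Katz's classification machinery (Larsen's alternative, the autoduality dichotomy, control of finite irreducible subgroups) to force $G_{\mathrm{geom}}\supseteq SL_{m-1}$; one also checks that the arithmetic monodromy group enlarges $G_{\mathrm{geom}}$ only by scalars, so that the two have the same image in $PGL_{m-1}$. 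Deligne's equidistribution theorem (Weil~II) then gives part (i): as $q\to\infty$ the classes $\Theta_\chi$, $\chi$ even primitive mod $t^{m+1}$, become equidistributed in the conjugacy classes of the compact form $PU(m-1)$.

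\emph{Step 3 (independence and part (ii)).} Let $[2]\colon X\to X$ be the morphism inducing $\chi\mapsto\chi^2$, so that $\Theta_{\chi^2}$ is read off from the pulled-back sheaf $[2]^*\mathcal{F}$. Consider the joint monodromy, i.e.\ the image of $\pi_1(X)$ acting on the pair $(\mathcal{F},[2]^*\mathcal{F})$ inside $GL_{m-1}\times GL_{m-1}$. By part (i) each projection has full image in $PGL_{m-1}$, so by the Goursat--Kolchin--Ribet lemma the joint projective image is either the whole product $PGL_{m-1}\times PGL_{m-1}$ or a fiber product over a common nontrivial quotient, and the latter forces a geometric isomorphism $\mathcal{F}\cong[2]^*\mathcal{F}\otimes\mathcal{L}$ for some rank-one sheaf $\mathcal{L}$. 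The crux of part (ii) is to exclude this twist; this is exactly where the hypothesis $\gcd(q,6)=1$ (characteristic $\neq 2,3$) enters, being what keeps the squaring map on the character group and the attendant Witt-vector combinatorics non-degenerate. Granting it, the joint monodromy is projectively the full product, the arithmetic group again differs only by scalars, and Deligne's theorem yields equidistribution of $(\Theta_\chi,\Theta_{\chi^2})$ in conjugacy classes of $PU(m-1)\times PU(m-1)$.

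The main obstacle, by a wide margin, is Step 2 — proving that these Witt-vector-parametrized families actually have monodromy group containing $SL_{m-1}$ rather than some smaller classical, induced, or finite group — together with the geometric non-isomorphism input needed in Step 3. The ranges $m\ge 4$ (respectively $m\ge 5$ with $\gcd(q,6)=1$) are precisely those in which these monodromy facts are currently established; small $m$ and small characteristic genuinely demand separate and more delicate analysis, as the footnoted refinement for $m=3$ already indicates. Everything else — the sheaf-theoretic encoding of the $L$-functions, the passage from monodromy to equidistribution via Weil~II, and the bookkeeping with the trivial zero and the projective normalization — is routine within Katz's framework.
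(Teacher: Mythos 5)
This theorem is not proved in the paper at all: it is imported verbatim from Katz's papers \cite{KatzKR2, KatzKR3}, so there is no internal argument to compare yours against. What you have written is a reasonable high-level description of the architecture of Katz's proofs --- the Artin--Schreier--Witt parametrization of even characters mod $t^{m+1}$, a lisse sheaf of rank $m-1$ whose Frobenius conjugacy classes are the $\Theta_\chi$, Deligne's equidistribution theorem to convert ``big monodromy'' into equidistribution in $PU(m-1)$, and a Goursat-type argument to upgrade to joint equidistribution of $(\Theta_\chi,\Theta_{\chi^2})$ once the twist isomorphism $\mathcal{F}\cong[2]^*\mathcal{F}\otimes\mathcal{L}$ is excluded.

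As a proof, however, the proposal has a genuine gap, and you essentially acknowledge it yourself: Step 2 (showing $G_{\mathrm{geom}}$ surjects onto $PGL_{m-1}$) and the exclusion of the twist in Step 3 are precisely the content of \cite{KatzKR2} and \cite{KatzKR3}, occupying the bulk of those papers, and here they are named rather than carried out. Writing ``apply Katz's classification machinery'' and ``the ranges $m\geq 4$, resp.\ $m\geq 5$ with $\gcd(q,6)=1$, are precisely those in which these monodromy facts are currently established'' is a citation, not an argument; in particular the specific role of the hypotheses on $m$ and on the characteristic (which enter through the Witt-vector combinatorics and the elimination of small or induced monodromy groups) is asserted without justification. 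Since the statement is explicitly attributed to Katz in the paper, citing it is legitimate --- but then the honest form of your write-up is ``this is Theorem X of \cite{KatzKR2} and Theorem Y of \cite{KatzKR3},'' not a proof sketch whose decisive steps are deferred to the very works being quoted.
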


For odd characters, the corresponding equidistribution and independence results are

\begin{thm}\label{thm:KatzKR odd}

i) \cite{KatzKR1}
Fix $m\geq 2$. Suppose we are given a sequence of finite fields
$\fq$ and squarefree polynomials $Q(T)\in \fq[T]$ of degree $m$. As
$q\to \infty$, the conjugacy classes $\Theta_{\chi}$ with $\chi$
running over all primitive odd characters modulo $Q$, are uniformly
distributed in the unitary group $U(m-1)$.

ii) \cite{KatzKR4} If in addition we restrict to $q$ odd, then the
set of pairs of conjugacy classes $(\Theta_\chi, \Theta_{\chi^2})$
become equidistributed in the space of conjugacy classes of the
product $U(m-1)\times U(m-1)$.
\end{thm}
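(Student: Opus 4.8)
This is one of N.~Katz's equidistribution theorems, and the plan is to follow the standard monodromy route: realise the unitarised Frobenii $\Theta_\chi$ as Frobenius conjugacy classes of a single lisse sheaf on a parameter variety, compute its geometric monodromy group, and apply Deligne's equidistribution theorem. First I would set up the family. Since $Q$ is squarefree of degree $m\ge 2$, every Dirichlet character $\chi$ modulo $Q$ is tamely ramified at the zeros of $Q$; an odd primitive $\chi$ corresponds, by class field theory for $U:=\mathbb{A}^1\setminus V(Q)$ with a prescribed tame behaviour at $\infty$, to a rank-one lisse $\overline{\Q_\ell}$-sheaf $\mathcal{L}_\chi$ on $U$, and (up to the Euler factors already isolated in the excerpt) $L(u,\chi)=\det\bigl(1-u\,\mathrm{Frob}_q\mid H^1_c(U_{\overline{\fq}},\mathcal{L}_\chi)\bigr)$, a polynomial of degree $m-1$ which by Weil's theorem (quoted above) is pure of weight $1$. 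The odd primitive characters of conductor dividing $Q$ themselves form the $\fq$-points of a geometrically irreducible affine variety $\mathcal{X}$ of positive dimension, and the groups $H^1_c(U,\mathcal{L}_\chi)$ glue, as $\chi$ varies, into a lisse sheaf $\mathcal{F}$ on $\mathcal{X}$ of rank $m-1$ pure of weight $1$; a Tate half-twist makes it pure of weight $0$, and by construction the geometric Frobenius conjugacy class of $\mathcal{F}$ at $\chi\in\mathcal{X}(\fq)$ is $\Theta_\chi$.

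With the family in hand, Deligne's equidistribution theorem (in the form used by Katz--Sarnak) says: if $G:=G_{\mathrm{geom}}(\mathcal{F})=G_{\mathrm{arith}}(\mathcal{F})$, then as $q\to\infty$ the classes $\{\Theta_\chi:\chi\in\mathcal{X}(\fq)\}$ equidistribute in $K^{\#}$, the space of conjugacy classes of a maximal compact $K\subset G(\C)$. Once $G_{\mathrm{geom}}=\operatorname{GL}_{m-1}$ it follows that $G_{\mathrm{arith}}=\operatorname{GL}_{m-1}$ as well (since $G_{\mathrm{geom}}\subseteq G_{\mathrm{arith}}\subseteq\operatorname{GL}_{m-1}$), and since $\mathcal{F}$ is pure of weight $0$ we may take $K=U(m-1)$. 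Thus part (i) reduces entirely to the \emph{big monodromy} statement $G_{\mathrm{geom}}(\mathcal{F})=\operatorname{GL}_{m-1}$.

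\emph{This monodromy computation is the main obstacle.} The route I would take: (a) prove $\mathcal{F}$ is geometrically irreducible, indeed Lie-irreducible --- not geometrically induced from a subgroup and not a nontrivial tensor product --- by computing the local monodromy of $\mathcal{F}$ at a suitable degeneration and exhibiting there a pseudoreflection (a local generator acting with a single nontrivial eigenvalue), whose presence in an irreducible representation rules out induction and tensor decomposition; (b) show $\mathcal{F}$ is not geometrically self-dual, again from local monodromy, since a self-duality would have to preserve the local data at the zeros of $Q$, which is impossible for generic (non-quadratic) $\chi$; (c) invoke the classification of Lie-irreducible connected subgroups of $\operatorname{GL}_{m-1}$ containing a pseudoreflection --- they are $\operatorname{SL}_{m-1}$, $\operatorname{Sp}_{m-1}$, $\operatorname{SO}_{m-1}$, and, when $m-1=7$, $G_2$ --- so that (b) eliminates $\operatorname{Sp}$ and $\operatorname{SO}$, while $G_2$ has no pseudoreflection in its $7$-dimensional representation, leaving $G^{0}=\operatorname{SL}_{m-1}$; (d) since $\det\mathcal{F}$ is geometrically nontrivial (it genuinely varies with $\chi$), $G_{\mathrm{geom}}$ surjects onto $\mathbb{G}_m$ via $\det$, so $G_{\mathrm{geom}}=\operatorname{GL}_{m-1}$. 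Steps (c) and (d) are formal; the explicit local-monodromy calculations behind (a) and (b) are the technical heart, and amount to the content of \cite{KatzKR1}.

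For part (ii) I would use \emph{independence of monodromy}. Squaring, $\chi\mapsto\chi^2$, is a morphism of $\mathcal{X}$ --- this is where $q$ odd enters, so that squaring on the character group is well behaved with no wild complications --- and pulling $\mathcal{F}$ back along it gives a lisse sheaf $\mathcal{F}'$ of rank $m-1$ with Frobenius class $\Theta_{\chi^2}$ at $\chi$, with $G_{\mathrm{geom}}(\mathcal{F}')=\operatorname{GL}_{m-1}$ by part (i). By the Goursat--Kolchin--Ribet lemma, $G_{\mathrm{geom}}(\mathcal{F}\oplus\mathcal{F}')=\operatorname{GL}_{m-1}\times\operatorname{GL}_{m-1}$ unless $\mathcal{F}'$ is geometrically isomorphic to a twist of $\mathcal{F}$ by an automorphism of $\operatorname{GL}_{m-1}$, i.e.\ $\mathcal{F}'\cong\mathcal{F}\otimes\mathcal{R}$ or $\mathcal{F}'\cong\mathcal{F}^{\vee}\otimes\mathcal{R}$ for a rank-one $\mathcal{R}$ on $\mathcal{X}_{\overline{\fq}}$; either relation, inspected at the zeros of $Q$ where local inertia acts through $\chi^2$ versus $\chi$, forces $\chi^2=\chi^{\pm1}$, confining $\chi$ to the finite set of characters of order $\le 3$, a proper closed subset of the irreducible $\mathcal{X}$ --- contradiction. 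Hence the joint monodromy is the full product, and Deligne's theorem applied to $\mathcal{F}\oplus\mathcal{F}'$ gives equidistribution of $(\Theta_\chi,\Theta_{\chi^2})$ in $(U(m-1)\times U(m-1))^{\#}$. The delicate point is making ``no twisted self-isomorphism'' precise --- tracking conductors under $\chi\mapsto\chi^2$ and discarding the degenerate characters --- and this, with part (i) as input, is the substance of \cite{KatzKR4}.
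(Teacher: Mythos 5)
This theorem is not proved in the paper at all: it is imported from Katz's work and quoted with the citations \cite{KatzKR1} and \cite{KatzKR4}, so there is no internal proof to compare your attempt against. Judged on its own terms, your proposal is a reasonable roadmap of the standard Deligne--Katz--Sarnak equidistribution strategy (parametrize the characters by the $\fq$-points of a space, package the cohomology groups $H^1_c$ into a sheaf pure of weight zero, prove big monodromy, conclude from Deligne's equidistribution theorem), but it is a roadmap rather than a proof: the two decisive inputs --- that the geometric monodromy of the family is all of $\operatorname{GL}_{m-1}$, and, for part (ii), that the pullback under $\chi\mapsto\chi^2$ admits no twisted isomorphism with the original family so that the joint monodromy is the full product $\operatorname{GL}_{m-1}\times\operatorname{GL}_{m-1}$ --- are exactly the points you defer to the very references the theorem cites. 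Nothing beyond the citation is actually established, and those are the hard parts; in Katz's papers the construction of the parameter space and of the relevant local systems for a squarefree conductor, and the monodromy determination, occupy essentially all of the work and are more delicate than the generic picture you sketch.

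A couple of the technical assertions in your outline are also off or unjustified as stated. The list ``$\operatorname{SL}_{m-1}$, $\operatorname{Sp}_{m-1}$, $\operatorname{SO}_{m-1}$, and $G_2$ when $m-1=7$'' is the classification of irreducible connected subgroups containing a \emph{regular unipotent} element (the Kloosterman-sheaf situation), not of those containing a pseudoreflection; for a non-unipotent pseudoreflection the conclusion is essentially that the group contains $\operatorname{SL}$, and for a transvection one gets $\operatorname{SL}$ or $\operatorname{Sp}$, so step (c) conflates two different classification theorems --- and in any case the pseudoreflection of step (a) is asserted, not exhibited, for this particular family. Similarly, in (ii) the claim that a twisted isomorphism would force $\chi^2=\chi^{\pm1}$ needs the precise local description of the family at the zeros of $Q$, which is again the content of \cite{KatzKR4}. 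So your proposal is a fair account of how such equidistribution theorems are generally proved, but as a proof of the quoted statement it has a genuine gap: the monodromy computations themselves, which are the substance of Katz's papers and are simply taken on faith here, exactly as the paper itself does by citing them.
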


%\newpage

\section{Variance of the  M\"obius function in short intervals}
\label{sec:var mob si}

%\subsection{Sums of the M\"obius function in short intervals}

%The M\"obius function $\mu(F)$ is clearly even, and is {\em symmetric}.
%Set
%\begin{equation}
%\EN_\mu(A;h):= \sum_ {f\in I(A;h)} \mu(f)
%\end{equation}
 % \marginpar{change all below to allow f(0)=0}
%According to Lemma~\ref{lem:Mean value},

For $n\geq 2$, the mean value of $\EN_\mu(A;h)$ over all $A\in
\mathcal M_n$ is :
\begin{equation}\label{mean mu}
\ave{\EN_\mu(\bullet;h)}  =0 \;.
%= \frac 1{q^n}\sum_{A\in \mathcal M_n}
%\EN_\mu(A;h)= \frac{q^{h+1}}{q^n}\sum_{f\in \EM_n} \mu(f)
%, \quad n>0
\end{equation}
%and hence for $n>1$,
%\begin{equation}
%\ave{\EN_\mu(\bullet;h)}=0
%\end{equation}
%since the generating function of M\"obius is
%$$\sum_f \mu(f)u^{\deg f}  = 1-qu$$
Indeed, by Lemma~\ref{lem:Mean value}
\begin{equation}
\ave{\EN_\mu(\bullet;h)} =  \frac {H}{q^n} \sum_{  f\in \mathcal M_n
} \mu(f) \;.
\end{equation}
Now as is well known and easy to see, for $n\geq 2$,
\begin{equation}
\sum_{ f\in \mathcal M_n } \mu(f) = 0 \;,
\end{equation}
%This can be seen for instance by computing the generating function
%\begin{equation}
%\sum_{ f } \mu(f) u^{\deg f} = \prod_{P} (1-u^{\deg P}) = 1-qu
%\end{equation}
hence we obtain \eqref{mean mu}.

%We will show that
%the mean value of $\EN_\mu(A;h)$ (as we average
%over $A\in \mathcal M_n$) is   $0$, and

We will show that the variance is
\begin{thm}
If $0\leq h\leq n-5$ then
\begin{equation}
\Var\EN_\mu(\bullet;h)\sim H, \quad q\to \infty
\end{equation}
\end{thm}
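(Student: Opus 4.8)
The plan is to apply the general covariance formula of Lemma~\ref{covariance lemma} with $\alpha=\beta=\mu$, and then evaluate the resulting sum over even characters using Katz's equidistribution theorem. First I would record the ingredients specific to $\mu$: by (weak) multiplicativity $\mu(t^{n-m})=0$ unless $n-m\in\{0,1\}$, so in the double sum only $m_1,m_2\in\{n-1,n\}$ survive, with $\mu(t^0)=1$, $\mu(t^1)=-1$. Thus
\begin{equation*}
\Var\EN_\mu(\bullet;h) = \frac{1}{\Phi_{ev}(t^{n-h})^2}\sum_{\substack{\chi\bmod t^{n-h}\\ \chi\neq\chi_0\text{ even}}}\bigl|\EM(n;\mu\chi)-\EM(n-1;\mu\chi)\bigr|^2 .
\end{equation*}
Next I would identify $\EM(m;\mu\chi)$ with coefficients of the inverse $L$-function: since $\sum_{f\text{ monic}}\mu(f)\chi(f)u^{\deg f}=1/L(u,\chi)$ and (for $\chi$ a primitive character mod $t^{n-h}$ of degree parameter giving $L$ a polynomial of degree $n-h-1$) one has $L(u,\chi)^{-1}=\prod_j(1-\alpha_j(\chi)u)^{-1}$... but more usefully, I would instead pass directly through the functional equation, or simply note that the combination $\EM(n;\mu\chi)-\EM(n-1;\mu\chi)$ picks out a specific symmetric function of the inverse roots. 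The cleanest route, mirroring \cite{KR}, is to use the functional equation to rewrite $1/L(u,\chi)$ in terms of $L(1/qu,\bar\chi)$ and thereby express $\EM(n;\mu\chi)$ for $n$ large (here $n\geq h+3$, i.e. $n-h\geq 3$ combined with the degree count) in terms of $\tr\Sym^k\Theta_\chi$; concretely the relevant identity turns the $\mu$-coefficient sum into $q^{n/2}$ times a power-sum/complete-homogeneous symmetric polynomial in the eigenvalues $e^{i\theta_j}$ of $\Theta_\chi$.

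The key computation is then to show that, after dividing by $\Phi_{ev}(t^{n-h})$, the sum over even primitive characters mod $t^{n-h}$ converges (as $q\to\infty$) to a matrix integral over $U(n-h-2)$, the group attached to even characters mod $t^{n-h}$ (where $\deg Q-2 = (n-h)-2$). Using Theorem~\ref{thm:KatzKR even}(i) — which requires $m=n-h\geq 4$, i.e. $h\leq n-4$, and the non-primitive even characters being negligible — the normalized character sum tends to
\begin{equation*}
H\int_{U(n-h-2)}\bigl|\text{(symmetric polynomial in }U)\bigr|^2\,dU .
\end{equation*}
I would then show this symmetric polynomial is exactly $\tr\Sym^n U$ (the degree-$n$ complete homogeneous symmetric function of the eigenvalues), so the integral is $\int_{U(N)}|\tr\Sym^n U|^2\,dU$ with $N=n-h-2$; by the standard representation-theoretic fact this equals $1$ whenever $n\geq 0$ and $N\geq 1$, giving $\Var\EN_\mu(\bullet;h)\sim H$. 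The bookkeeping for the factor $q^{h+1}=H$ comes from combining $q^n$ (from $q^{n/2}\cdot q^{n/2}$ in $|\EM|^2$), the normalization $\Phi_{ev}(t^{n-h})^{-2}=q^{-2(n-h-1)}$, and the number $\sim q^{n-h-2}(q-1)$ of even primitive characters.

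The main obstacle is the passage from the raw $\mu$-coefficient sum $\EM(n;\mu\chi)-\EM(n-1;\mu\chi)$ to a clean expression of the form $q^{n/2}\tr\Sym^n\Theta_\chi$ valid uniformly for all the characters in the family: this requires the functional equation for $L(u,\chi)$ for even primitive $\chi\bmod t^{n-h}$, careful tracking of the trivial zero at $u=1$ and of the conductor/root-number factors, and checking that the degree restriction $h\leq n-5$ (rather than merely $h\leq n-4$) is what makes the relevant $\Sym^n$ identity and the negligibility of imprimitive characters both go through — in particular ensuring $n\geq N = n-h-2 \geq 3$ so that no low-degree degeneracies occur and the matrix integral genuinely equals $1$. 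A secondary point requiring care is justifying that the contribution of non-primitive even characters (and of the trivial character, already removed) is $O(H/q)$, using the bound $\Phi_{ev,prim}(t^m)=q^{m-2}(q-1)$ together with Weil's bound $|\alpha_j|=\sqrt q$ applied to the lower-conductor $L$-functions. Once these are in place, the evaluation of $\int_{U(N)}|\tr\Sym^n U|^2\,dU=1$ is immediate and the theorem follows.
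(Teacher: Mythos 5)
Your outline follows the paper's proof essentially step for step: the covariance lemma with $\alpha=\beta=\mu$ reduces the variance to $\Phi_{ev}(t^{n-h})^{-2}\sum_{\chi\neq\chi_0\ \mathrm{even}}|\EM(n;\mu\chi)-\EM(n-1;\mu\chi)|^2$, primitive even characters give $q^{n/2}\tr\Sym^n\Theta_\chi$, imprimitive ones contribute $O(q^h)=O(H/q)$, and Katz's equidistribution theorem converts the character average into $\int_{U(n-h-2)}|\tr\Sym^n U|^2\,dU=1$; your bookkeeping producing the factor $H=q^{h+1}$ is also right. Two corrections, though. First, the functional-equation detour you flag as the main obstacle is unnecessary: since $\sum_m\EM(m;\mu\chi)u^m=1/L(u,\chi)$ and, for $\chi$ primitive and even, $L(u,\chi)=(1-u)\det(I-uq^{1/2}\Theta_\chi)$, multiplying by $(1-u)$ gives $(1-u)\sum_m\EM(m;\mu\chi)u^m=\sum_k q^{k/2}\tr\Sym^k\Theta_\chi\,u^k$, and comparing coefficients yields exactly $\EM(n;\mu\chi)-\EM(n-1;\mu\chi)=q^{n/2}\tr\Sym^n\Theta_\chi$ with no root-number or conductor analysis; the trivial zero at $u=1$ is precisely what makes the difference telescope to a single term. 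Second, you misplace the source of the hypothesis $h\leq n-5$: it is not about ``low-degree degeneracies'' in the $\Sym^n$ identity or the matrix integral (which equals $1$ for every $N\geq 1$ by irreducibility), nor would Katz's theorem apply already for $h\leq n-4$. In the paper's normalization the equidistribution statement is for even primitive characters modulo $T^{m+1}$ with $m\geq 4$ and target group $PU(m-1)$; here the modulus is $t^{n-h}$, so $m=n-h-1$ and the requirement $m\geq 4$ is exactly $h\leq n-5$, with the Frobenii living in $PU(n-h-2)$. With these two points repaired, your argument is the paper's proof.
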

%This is consistent with \eqref{GC conj} if we
%write it as $H/\zeta_q(2)$ where $\zeta_q(2) = \sum_{f \, monic}
%\frac 1{||f||^2}$ (which tends to $1$ as $q\to \infty$) and
%$H=q^{h+1}$ is the number of monic polynomials in the short
%interval.

%\subsection{The variance}
We use the general formula of Lemma~\ref{covariance lemma} which
gives
\begin{equation}\label{gen form mu}
\Var (\EN_\mu(\bullet;h)) = \frac 1{q^{2(n-h-1)}} \sum_{\substack{\chi \bmod t^{n-h}\\
\chi\neq \chi_0 \mbox{ even}}} |\EM(n;\mu\chi)- \EM(n-1;\mu\chi)|^2
\end{equation}
where
\begin{equation}
\EM(n;\mu\chi) = \sum_{f\in \mathcal M_n} \mu(f)\chi(f)\;.
\end{equation}

We claim that
\begin{lem}
Suppose that $\chi$ is a primitive even character modulo $t^{n-h}$.
Then
\begin{equation}\label{formula for M_n}
\EM(n;\mu\chi) = \sum_{k=0}^n q^{k/2} \tr \Sym^k \Theta_\chi
\end{equation}
where $\Sym^n$
%: GL(N,\C)\to GL(\Sym^n \C^N)$
is the symmetric $n$-th power representation ($n=0$ corresponds to
the trivial representation).
In particular,
\begin{equation}
\EM(n;\mu\chi)- \EM(n-1;\mu\chi) =  q^{n/2}\tr \Sym^n \Theta_\chi
\;.
\end{equation}
If $\chi\neq \chi_0$ is not primitive, then
\begin{equation}\label{imprimitive M}
|\EM(n;\mu\chi)| \ll_n q^{n/2} \;.
\end{equation}
\end{lem}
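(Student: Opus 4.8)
The plan is to express the Dirichlet series $\sum_f \mu(f)\chi(f) u^{\deg f}$ in closed form and then extract the coefficient of $u^n$. Since $\mu$ is the Möbius function and $\chi$ is completely multiplicative, the Euler product gives
\begin{equation*}
\sum_{f \text{ monic}} \mu(f)\chi(f) u^{\deg f} = \prod_{P} \bigl(1 - \chi(P) u^{\deg P}\bigr) = \frac{1}{L(u,\chi)},
\end{equation*}
valid for $|u|$ small, where $L(u,\chi)$ is the $L$-function from \eqref{Def of L}. When $\chi$ is a primitive even character modulo $t^{n-h}$, the $L$-function has a trivial zero at $u=1$ and factors as $L(u,\chi) = (1-u)\prod_{j=1}^{N}(1 - q^{1/2}e^{i\theta_j} u)$ with $N = (n-h)-2 = \deg Q - 2$. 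Hence
\begin{equation*}
\sum_{f} \mu(f)\chi(f) u^{\deg f} = \frac{1}{(1-u)\prod_{j=1}^N (1 - \sqrt q\, e^{i\theta_j} u)} = \frac{1}{1-u} \cdot \frac{1}{\det(I - \sqrt q\, u\, \Theta_\chi)}.
\end{equation*}

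First I would recall the standard generating-function identity $\det(I - u\Theta)^{-1} = \sum_{k\ge 0} u^k \tr \Sym^k \Theta$, so that $\det(I - \sqrt q\, u\, \Theta_\chi)^{-1} = \sum_{k\ge 0} q^{k/2} u^k \tr \Sym^k\Theta_\chi$. Multiplying by the geometric series $\tfrac{1}{1-u} = \sum_{\ell \ge 0} u^\ell$ and reading off the coefficient of $u^n$ gives
\begin{equation*}
\EM(n;\mu\chi) = \sum_{k=0}^n q^{k/2}\tr\Sym^k\Theta_\chi,
\end{equation*}
which is \eqref{formula for M_n}. (One should note that $\EM(n;\mu\chi)$ is, a priori, the $u^n$-coefficient of a polynomial identity once one clears denominators, so the manipulation of formal power series is legitimate.) Subtracting the analogous expression for $n-1$ telescopes the sum and leaves exactly the top term $q^{n/2}\tr\Sym^n\Theta_\chi$.

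For the imprimitive case, if $\chi \ne \chi_0$ is induced by a primitive character $\chi^*$ modulo some proper divisor $Q' \mid t^{n-h}$, i.e. $Q' = t^{m'}$ with $1 \le m' < n-h$, then $L(u,\chi) = L(u,\chi^*)\prod_{P \mid t^{n-h},\, P\nmid Q'}(1-\chi^*(P)u^{\deg P})$; but the only prime dividing $t^{n-h}$ is $P = t$, and if $\chi$ is nontrivial then $t \mid Q'$ already, so in fact $L(u,\chi) = L(u,\chi^*)$ is a polynomial of degree $m'-1$ or $m'-2$ with all nontrivial inverse roots of modulus $\sqrt q$ by Weil. Thus $1/L(u,\chi) = \sum_f \mu(f)\chi(f)u^{\deg f}$ is again a rational function whose denominator has inverse roots of modulus $\le \sqrt q$, and extracting the $u^n$ coefficient as above yields $\EM(n;\mu\chi) = \sum_{k} q^{k/2}(\cdots)$ with at most $n+1$ terms each bounded by $q^{n/2}$ in absolute value (using $|\tr\Sym^k\Theta| \le \dim\Sym^k$, a constant depending only on $N \le n$). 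This gives $|\EM(n;\mu\chi)| \ll_n q^{n/2}$.

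The main obstacle is bookkeeping the precise degree of $L(u,\chi)$ and the location/multiplicity of the trivial zero at $u=1$ in the even primitive case, so that the factor $\tfrac{1}{1-u}$ is accounted for exactly once and $N = n-h-2$ is correct; this is where the even-versus-odd distinction and the hypothesis that $\chi$ is a character modulo the prime power $t^{n-h}$ (rather than a general squarefree modulus) enter, and it must be handled carefully to land the clean telescoping formula. The rest is routine formal-power-series manipulation together with the Riemann Hypothesis for curves (Weil) as quoted in \eqref{eqRHWeil}.
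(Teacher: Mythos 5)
Your proposal is correct and follows essentially the same route as the paper: expand $\sum_n \EM(n;\mu\chi)u^n = 1/L(u,\chi)$, use the factorization $L(u,\chi)=(1-u)\det(I-u\sqrt{q}\,\Theta_\chi)$ with $\Theta_\chi\in U(n-h-2)$ for primitive even $\chi$, apply $\det(I-uA)^{-1}=\sum_k u^k\tr\Sym^k A$, and compare coefficients; the imprimitive bound likewise comes from all inverse roots having modulus at most $\sqrt{q}$, exactly as in the paper. The only cosmetic difference is that you multiply by the geometric series $1/(1-u)$ while the paper multiplies both sides by $(1-u)$, which is the same coefficient identity.
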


\begin{proof}
We compute the generating function
\begin{equation}\label{gen M mu SI}
\sum_{n=0}^\infty \EM(n;\mu\chi) u^n = \sum_{f\; {\rm monic}} \chi(f)\mu(f) u^{\deg
f}  = \frac 1{L(u,\chi)}
\end{equation}
where $L(u,\chi) = \sum_{f\;{\rm monic}} \chi(f)u^{\deg f}$ is the associated
Dirichlet L-function. Now if $\chi$ is {\em primitive} and {\em
even}, then
\begin{equation}
L(u,\chi) = (1-u)\det (I-uq^{1/2}\Theta_\chi)
\end{equation}
where $\Theta_\chi\in U(n-h-2)$ is the unitarized Frobenius class.
Therefore we find
\begin{equation}
(1-u)\sum_{n=0}^\infty \EM(n;\mu\chi) u^n = \frac 1{\det
(I-uq^{1/2}\Theta_\chi)} = \sum_{k=0}^\infty q^{k/2}\tr \Sym^k
\Theta_\chi u^k  \;,
\end{equation}
where we have used the identity
\begin{equation}
\frac 1{\det (I-uA)} = \sum_{k=0}^\infty u^k \tr \Sym^k A \;.
\end{equation}
Comparing coefficients gives \eqref{formula for M_n}.

For non-primitive but non-trivial characters $\chi\neq \chi_0$, the
L-function still has the form $L(u, \chi) =
\prod_{j=1}^{n-h-1}(1-\alpha_j u)$ with all inverse roots
$|\alpha_j|\leq \sqrt{q}$, and hence we obtain \eqref{imprimitive
M}.
\end{proof}

We can now compute the variance using \eqref{gen form mu}. We start
by bounding the contribution of non-primitive characters, whose
number is $O(\frac 1q \Phi_{\rm ev}(t^{n-h}))=O(q^{n-h-2})$, and by
\eqref{imprimitive M} each contributes $O(q^{n})$ to the sum in
\eqref{gen form mu}, hence the total contribution of non-primitive
characters is bounded by $O_n(q^h)$. Consequently we find
\begin{equation}
\Var  \EN_\mu(\bullet;h) =  \frac {q^{h+1}}   {\Phi_{\rm
ev}(t^{n-h})}
\sum_{\substack{\chi \bmod t^{n-h}\\
\chi {\rm \; even \; and\;  primitive}}} |\tr \Sym^n \Theta_\chi |^2
+O(q^h) \;.
\end{equation}

Using Theorem~\ref{thm:KatzKR even}(i) we get, once we
 replace the projective group by the unitary group,
\begin{equation}
\lim_{q\to \infty}\frac{\Var(\EN_\mu(\bullet;h))} {q^{h+1}} =
\int_{U(n-h-2)} \left|\tr \Sym^n U \right|^2 dU \;.
\end{equation}
Note that by Schur-Weyl duality   (and Weyl's unitary trick),
  $\Sym^n$ is an {\em irreducible}
representation. Hence
\begin{equation}
\int_{U(n-h-2)} \left|\tr \Sym^n U \right|^2 dU=1
\end{equation}
and we conclude that $\Var(\EN_\mu(\bullet;h))\sim q^{h+1}=H$, as claimed.

%\newpage

\section{Variance of the M\"obius function in arithmetic
progressions}\label{Sec:mob AP}

We define
$$
\ES_{\mu,n,Q}(A) = \ES_\mu(A) = \sum_{\substack{f\in \EM_n\\
f=A\bmod Q}} \mu(f) \;.
$$
\begin{thm}
If $n\geq \deg Q\geq 2$ then the mean value of $\ES_\mu(A) $ tends
to $0$ as $q\to \infty$, and
\begin{equation}
\Var_Q (\ES_\mu ) \sim \frac{q^n}{\Phi(Q)}\int_{U(Q-1)}\Big|
\tr\Sym^n U\Big|^2 dU =\frac{q^n}{\Phi(Q)}\;.
\end{equation}
\end{thm}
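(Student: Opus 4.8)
The plan is to specialise the general variance identity \eqref{gen formula for varianceAP} to $\alpha=\mu$, to express the twisted sums $\EM(n;\mu\chi)=\sum_{f\in\EM_n}\mu(f)\chi(f)$ in terms of the unitarized Frobenii $\Theta_\chi$ via the Euler product identity $\sum_{n\ge0}\EM(n;\mu\chi)u^n=1/L(u,\chi)$ (cf.~\eqref{gen M mu SI}), and then to apply Katz's equidistribution theorem for odd primitive characters (Theorem~\ref{thm:KatzKR odd}(i)) to turn the resulting average over characters into a matrix integral over $U(\deg Q-1)$, which one evaluates by representation theory.

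For the mean, write $\ave{\ES_\mu}_Q=\frac1{\Phi(Q)}\EM(n;\mu\chi_0)=\frac1{\Phi(Q)}\sum_{f\in\EM_n,\,(f,Q)=1}\mu(f)$ and use $\sum_n\EM(n;\mu\chi_0)u^n=1/L(u,\chi_0)=(1-qu)\prod_{P\mid Q}(1-u^{\deg P})^{-1}$. The power series $\prod_{P\mid Q}(1-u^{\deg P})^{-1}$ has nonnegative integer coefficients bounded by $(n+1)^{\deg Q}$ in degrees $\le n$, hence independent of $q$; so $\big|\sum_{f\in\EM_n,\,(f,Q)=1}\mu(f)\big|=O_n(q)$, and since $\Phi(Q)=q^{\deg Q}(1+O_n(1/q))\gg q^2$ (using $\deg Q\ge2$) we get $\ave{\ES_\mu}_Q=O_n(1/q)\to0$.

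For the variance, \eqref{gen formula for varianceAP} gives $\Var_Q(\ES_\mu)=\frac1{\Phi(Q)^2}\sum_{\chi\neq\chi_0}|\EM(n;\mu\chi)|^2$, and I would split the characters into three classes. If $\chi$ is primitive and odd then $L(u,\chi)=\det(I-q^{1/2}u\Theta_\chi)$ with $\Theta_\chi\in U(\deg Q-1)$, so $1/L(u,\chi)=\sum_{k\ge0}q^{k/2}(\tr\Sym^k\Theta_\chi)u^k$ and hence $\EM(n;\mu\chi)=q^{n/2}\tr\Sym^n\Theta_\chi$. If $\chi$ is primitive and even then $L(u,\chi)=(1-u)\det(I-q^{1/2}u\Theta_\chi)$ with $\Theta_\chi\in U(\deg Q-2)$, whence $\EM(n;\mu\chi)=\sum_{k=0}^n q^{k/2}\tr\Sym^k\Theta_\chi=O_n(q^{n/2})$; and if $\chi\neq\chi_0$ is imprimitive, $L(u,\chi)$ is a polynomial all of whose inverse roots have modulus $\le\sqrt q$, so again $\EM(n;\mu\chi)=O_n(q^{n/2})$. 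The number of characters that are even or imprimitive is $O(q^{\deg Q-1})$, so their total contribution to $\Var_Q(\ES_\mu)$ is $O_n\!\big(q^{\deg Q-1}\cdot q^n/\Phi(Q)^2\big)=O_n(q^{\,n-\deg Q-1})$, which is $o(q^n/\Phi(Q))$. The remaining $\Phi_{prim}^{odd}(Q)=\Phi(Q)(1+O(1/q))$ primitive odd characters yield
\[
\Var_Q(\ES_\mu)=\frac{q^n\,\Phi_{prim}^{odd}(Q)}{\Phi(Q)^2}\cdot\frac1{\Phi_{prim}^{odd}(Q)}\sum_{\substack{\chi\bmod Q\\ \chi\;{\rm primitive,\;odd}}}\big|\tr\Sym^n\Theta_\chi\big|^2+O_n\!\big(q^{\,n-\deg Q-1}\big).
\]
By Theorem~\ref{thm:KatzKR odd}(i) the normalized sum converges as $q\to\infty$ to $\int_{U(\deg Q-1)}|\tr\Sym^n U|^2\,dU$ (the integrand is a continuous class function), while $\Phi_{prim}^{odd}(Q)/\Phi(Q)^2\sim1/\Phi(Q)$; hence $\Var_Q(\ES_\mu)\sim\frac{q^n}{\Phi(Q)}\int_{U(\deg Q-1)}|\tr\Sym^n U|^2\,dU$. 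Since $\deg Q-1\ge1$, the representation $\Sym^n$ of $U(\deg Q-1)$ is irreducible (Schur--Weyl duality, as in Section~\ref{sec:var mob si}), so by Schur orthogonality the integral equals $1$, giving $\Var_Q(\ES_\mu)\sim q^n/\Phi(Q)$.

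The genuinely substantive input is external, namely Katz's equidistribution theorem for odd primitive characters; granting it, the only place needing care is the estimation of the contributions of the even and imprimitive characters, where one must check that the $\Phi(Q)^{-2}$ normalization beats their count $O(q^{\deg Q-1})$ and individual size $O_n(q^{n/2})$ — which it does with a factor of $q$ to spare — together with the elementary generating-function estimate for the mean value; both are routine once the $L$-function factorizations above are in hand.
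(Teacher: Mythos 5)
Your argument is correct and follows essentially the same route as the paper: the orthogonality-based variance formula \eqref{gen formula for varianceAP}, the identity $\sum_n\EM(n;\mu\chi)u^n=1/L(u,\chi)$ giving $\EM(n;\mu\chi)=q^{n/2}\tr\Sym^n\Theta_\chi$ for odd primitive $\chi$ and the $O_n(q^{n/2})$ bound for the $O(\Phi(Q)/q)$ remaining nontrivial characters, followed by Katz's equidistribution theorem and irreducibility of $\Sym^n$. The only cosmetic difference is in the mean-value step, where you bound the coefficients of $\prod_{P\mid Q}(1-u^{\deg P})^{-1}$ crudely instead of writing them out exactly as the paper does; this is equally valid.
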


The mean value over all residues coprime to $Q$ is
\begin{equation}
\ave{\ES_\mu} = \frac 1{\Phi(Q)} \sum_{\substack{f\in
\EM_n\\\gcd(f,Q)=1}} \mu(f)  = \frac 1{\Phi(Q)}\EM(n,\mu\chi_0)\;.
\end{equation}
To evaluate this quantity, we consider the generating function
\begin{equation}
\begin{split}
\sum_{n=0}^\infty \EM(n,\mu\chi_0)u^n &= \sum_{\gcd(f,Q)=1} \mu(f) u^{\deg f}\\
& = \prod_{P\nmid Q} (1-u^{\deg P})
 = \frac {1-qu}{\prod_{P\mid Q} (1-u^{\deg P})}\\
&= \frac {1-qu}{\prod_{k} (1-u^k)^{\lambda_k}}
\end{split}
\end{equation}
where $\lambda_k$ is the number of prime divisors of $Q$ of degree
$k$. Using the expansion
$$ \frac 1{(1-z)^\lambda} = \sum_{n=0}^\infty \binom{n+\lambda-1}{\lambda-1}z^n$$
gives
$$
 \frac {1}{\prod_{k} (1-u^k)^{\lambda_k}} = \sum_{n=0}^\infty C(n)u^n
$$
with
$$ C(n) = \sum_{\sum_k  k n_k=n} \prod_k  \binom{n_k+\lambda_k-1}{\lambda_k-1}$$
and hence for $n\geq 1$,
$$
\EM(n,\mu\chi_0) = C(n)-qC(n-1) \;.
$$
Thus we find that for $n\geq 1$,
\begin{equation}
\ave{\ES_\mu} = \frac {C(n)-qC(n-1)}{\Phi(Q)}
\end{equation}
and in particular for $\deg Q>1$,
\begin{equation}
|\ave{\ES_\mu}| \ll \frac {q}{|Q|} \to 0, \quad q\to \infty \;.
\end{equation}

For the variance we use \eqref{gen formula for varianceAP} which
gives
\begin{equation}\label{prelim var muQ}
\Var_Q( \ES_\mu)  = \frac 1{\Phi(Q)^2} \sum_{\chi\neq \chi_0}
|\EM(n;\mu\chi)|^2 \;.
\end{equation}
As in \eqref{gen M mu SI}, the generating function of
$\EM(n;\mu\chi)$ is $1/L(u,\chi)$. Now for $\chi$ odd and primitive,
$L(u,\chi) = \det (I-uq^{1/2}\Theta_\chi)$ with $\Theta_\chi\in
U(\deg Q-1)$ unitary. Hence for $\chi$ odd and primitive,
\begin{equation}
\EM(n;\mu\chi) = q^{n/2}\tr\Sym^n\Theta_\chi \;.
\end{equation}
For $\chi\neq \chi_0$ which is not odd and primitive, we can still
write $L(u,\chi) = \prod_{j=1}^{\deg Q-1}(1-\alpha_j u)$ with all
inverse roots $|\alpha_j|\leq \sqrt{q}$, and hence for $\chi\neq
\chi_0$ we have a bound
\begin{equation}\label{imprimitive muQ}
|\EM(n;\mu\chi)| \ll_n  q^{n/2} \;.
\end{equation}
The number of even characters is $\Phi_{\rm ev}(Q) = \Phi(Q)/(q-1)$
and the number of non-primitive characters is $O(\Phi(Q)/q)$, hence
the number of characters which are not odd and primitive is
$O(\Phi(Q)/q)$. Inserting the bound \eqref{imprimitive muQ} into
\eqref{prelim var muQ} shows that the contribution of such
characters is $O(q^{n-1}/\Phi(Q))$. Hence
\begin{equation}
\Var_Q \ES_\mu  = \frac{q^n}{\Phi(Q)} \frac 1{\Phi(Q)} \sum_{\chi
{\rm \; odd \; primitive}} |\tr\Sym^n\Theta_\chi|^2
+O(\frac{q^{n-1}}{\Phi(Q)})\;.
\end{equation}

Using Theorem~\ref{thm:KatzKR odd}(i) gives that as $q\to \infty$,
\begin{equation}
\Var_Q \ES_\mu  \sim \frac{q^n}{\Phi(Q)}\int_{U(Q-1)}\Big| \tr\Sym^n
U\Big|^2 dU =\frac{q^n}{\Phi(Q)}\;.
\end{equation}

%\newpage

\section{The variance of squarefrees in short intervals}
\label{sec:var sf si}

In this section we study the variance of the number of squarefree
polynomials in short intervals. The total number of squarefree
monic polynomials of degree $n>1$ is (exactly)
\begin{equation}
\sum_{f\in \mathcal M_n} \mu(f)^2 = \frac
{q^n}{\zeta_q(2)}=q^n(1-\frac 1q) \;.
\end{equation}
The number of squarefree polynomials in the short interval $I(A;h)$
is
\begin{equation}
\EN_{\mu^2}(A;h)= \sum_{f\in I(A;h)} \mu(f)^2 \;.
\end{equation}
%\begin{verbatim}
%we impose the condition f(0)\neq 0 which is unnatural here so will
%need to dispose of it later.
%\end{verbatim}

\begin{thm}
Let $0\leq h\leq n-6$. Assume $q\to \infty$ with all $q$'s coprime
to $6$.

i) If $h$ is even then
\begin{equation}
\Var{\EN_{\mu^2}(\bullet;h)} \sim q^{\frac h2}
\int\limits_{U(n-h-2)} \left| \tr \Sym^{\frac h2+1}U  \right|^2
dU=\frac{\sqrt{H}}{\sqrt{q}} \;.
\end{equation}
%(the matrix integral works out to be $1$).

ii) If $h$ is odd then
\begin{equation}
\begin{split}
\Var{\EN_{\mu^2}(\bullet;h)} &\sim q^{\frac{h-1}2}
\int\limits_{U(n-h-2)}|\tr U |^2dU  %\Lambda^{n-h-3}(U)|^2dU
\int\limits_{U(n-h-2)}|\tr\sym^{\frac{h+3}2}U'|^2dU' \\
&=\frac{\sqrt{H}}{q} \;.
\end{split}
\end{equation}
\end{thm}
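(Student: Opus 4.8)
The plan is to proceed exactly as in the Möbius case of Section~\ref{sec:var mob si}, starting from the general covariance formula of Lemma~\ref{covariance lemma} applied with $\alpha=\beta=\mu^2$. Since $\mu^2$ is even, symmetric and (weakly) multiplicative, the formula gives
\begin{equation*}
\Var(\EN_{\mu^2}(\bullet;h)) = \frac{1}{\Phi_{ev}(t^{n-h})^2}
\sum_{\substack{\chi\bmod t^{n-h}\\ \chi\neq\chi_0\ {\rm even}}}
\left| \sum_{m=0}^n \mu^2(t^{n-m})\EM(m;\mu^2\chi) \right|^2 .
\end{equation*}
Because $\mu^2(t^{n-m})=0$ unless $n-m\in\{0,1\}$, the inner sum collapses to $\EM(n;\mu^2\chi)+\EM(n-1;\mu^2\chi)$, so the whole problem reduces to understanding $\EM(m;\mu^2\chi)$ for $m=n-1,n$ and $\chi$ a primitive even character modulo $t^{n-h}$.

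The key step is an analogue of the lemma computing $\EM(n;\mu\chi)$. First I would work out the generating function $\sum_m \EM(m;\mu^2\chi)u^m = \sum_{f\ {\rm monic}} \mu^2(f)\chi(f)u^{\deg f}$. Using the Euler product and the identity $\sum_{f}\mu^2(f)u^{\deg f} = \prod_P(1+u^{\deg P}) = Z(u)/Z(u^2)$ in the untwisted case, the twisted version becomes $L(u,\chi)/L(u^2,\chi^2)$ (for $\chi$ primitive even, one must track the trivial factor $(1-u)$ and whether $\chi^2$ is itself primitive; this is where the parity of $n-h$, equivalently of $h$, will enter). Writing $L(u,\chi)=(1-u)\det(I-uq^{1/2}\Theta_\chi)$ and $L(u^2,\chi^2)=(1-u^2)\det(I-u^2 q\,\Theta_{\chi^2})$ and expanding $1/L(u^2,\chi^2)=\sum_k q^k u^{2k}\tr\Sym^k\Theta_{\chi^2}$, one gets
\begin{equation*}
\sum_m \EM(m;\mu^2\chi)u^m = \frac{\det(I-uq^{1/2}\Theta_\chi)}{1+u}\cdot \sum_{k\geq 0} q^k u^{2k}\tr\Sym^k\Theta_{\chi^2}.
\end{equation*}
Extracting the coefficients of $u^n$ and $u^{n-1}$ and forming $\EM(n;\mu^2\chi)+\EM(n-1;\mu^2\chi)$, the telescoping from the $1/(1+u)$ factor should kill most terms and leave, up to lower order, a single leading contribution: when $h$ is even (so $n-h$ even) the dominant term is $q^{h/2+1}\tr\Sym^{h/2+1}\Theta_{\chi^2}$ of size $q^{h/2+1}$, while when $h$ is odd the leading $\Sym$-term from $\Theta_{\chi^2}$ is forced to pair with a $\tr\Theta_\chi$ factor coming from the numerator $\det(I-uq^{1/2}\Theta_\chi)$, producing $q^{(h+1)/2}\,\tr\Theta_\chi\cdot\tr\Sym^{(h+3)/2}\Theta_{\chi^2}$ of size $q^{(h+1)/2}\cdot q^{1/2}$. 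I would then bound the non-primitive $\chi$ exactly as before (their number is $O(q^{n-h-2})$ and each $\EM$ is $O_n(q^{\cdot})$ by Weil), checking the error is genuinely lower order.

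Finally, substituting into the variance formula with $\Phi_{ev}(t^{n-h})=q^{n-h-1}$ gives a main term of the shape $q^{h+1-(n-h-1)}\cdot q^{2\cdot(\text{leading exponent})}$ times an average over primitive even $\chi$ of $|\tr\Sym^{h/2+1}\Theta_{\chi^2}|^2$ (even $h$) or of $|\tr\Theta_\chi|^2|\tr\Sym^{(h+3)/2}\Theta_{\chi^2}|^2$ (odd $h$). Here Theorem~\ref{thm:KatzKR even}(ii) — the joint equidistribution of $(\Theta_\chi,\Theta_{\chi^2})$ in $PU(n-h-2)\times PU(n-h-2)$, valid for $n-h-2\geq 5$, i.e.\ $h\leq n-7$... actually $h\leq n-6$ with the given indexing — converts these averages into the product of matrix integrals in the statement. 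Since $\Sym^k$ is irreducible (Schur–Weyl), each integral $\int_{U(N)}|\tr\Sym^k U|^2\,dU=1$ and $\int_{U(N)}|\tr U|^2\,dU=1$, yielding $\sqrt H/\sqrt q$ and $\sqrt H/q$ respectively. The main obstacle I anticipate is the bookkeeping in the generating-function step: correctly handling the trivial zeros of $L(u,\chi)$ and $L(u^2,\chi^2)$, deciding exactly when $\chi^2$ is primitive (this is precisely the parity dichotomy, and the reason one needs the independence statement for the \emph{pair} $(\Theta_\chi,\Theta_{\chi^2})$ rather than just for $\Theta_\chi$), and verifying that the coefficient extraction really isolates a single clean leading term rather than a sum of comparably-sized ones — the cancellation making the odd-$h$ variance smaller by a further factor of $q$ is the subtle point that must be pinned down carefully.
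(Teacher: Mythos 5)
Your route is exactly the paper's: reduce via the covariance lemma to $|\EM(n;\mu^2\chi)+\EM(n-1;\mu^2\chi)|^2$ summed over nontrivial even characters mod $t^{n-h}$, compute the generating function $L(u,\chi)/L(u^2,\chi^2)$, extract the coefficient using the spectral data $(\Theta_\chi,\Theta_{\chi^2})$, discard imprimitive characters, and finish with Katz's joint equidistribution and irreducibility of $\Sym^k$. The parity dichotomy is indeed located where you say it is, and $|\lambda_{N-1}(\Theta_\chi)|=|\tr\Theta_\chi|$ explains the extra factor $|\tr U|^2$ in the odd case.

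However, there is a concrete error that, as written, makes the leading-term extraction fail. Substituting $v=u^2$ into $L(v,\chi^2)=(1-v)\det(I-vq^{1/2}\Theta_{\chi^2})$ gives $L(u^2,\chi^2)=(1-u^2)\det(I-u^2q^{1/2}\Theta_{\chi^2})$, so the expansion is $\sum_k q^{k/2}u^{2k}\tr\Sym^k\Theta_{\chi^2}$ --- not $q^k u^{2k}$ as you wrote. With your normalization every term $j+2k=n$ in the coefficient of $u^n$ has magnitude $q^{j/2+k}=q^{n/2}$, so \emph{all} terms are of equal size, there is no dominant term, and the even/odd dichotomy evaporates. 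With the correct $q^{k/2}$ the terms have size $q^{(j+k)/2}=q^{(n-k)/2}$, the admissible $j$ satisfy $j\le N=n-h-2$ and $j\equiv n\bmod 2$, and the unique largest such $j$ ($j=N$ for $h$ even, $j=N-1$ for $h$ odd) dominates the rest by powers of $q^{1/2}$. This also corrects your stated magnitudes: the leading term is $\lambda_N(\chi)\sym^{h/2+1}(\chi^2)\,q^{(n+N)/4}$ (even $h$), resp.\ $\lambda_{N-1}(\chi)\sym^{(h+3)/2}(\chi^2)\,q^{(n+N)/4-1/4}$ (odd $h$), not $q^{h/2+1}$ resp.\ $q^{(h+1)/2+1/2}$; squaring and dividing by $\Phi_{ev}(t^{n-h})=q^{n-h-1}$ then yields $q^{h/2}=\sqrt{H}/\sqrt{q}$ and $q^{(h-1)/2}=\sqrt{H}/q$. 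A smaller slip: ``$h$ even (so $n-h$ even)'' is not the right parity statement; the relevant condition is $n\equiv N\bmod 2$, which is equivalent to $h$ even irrespective of the parity of $n$. Once these are repaired, your argument coincides with the paper's (which, for even $h$, actually needs only the single equidistribution statement after the substitution $\chi\mapsto\chi^2$, reserving the joint independence for odd $h$).
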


\begin{proof}
To compute the variance, we use Lemma~\ref{covariance lemma}. Since
$\mu^2(t^m)=1$ for $m=0,1$ and equals $0$ for $m>1$, we obtain
\begin{equation}\label{formula for variance}
 \Var(\EN_{\mu^2}(\bullet;h)) =  \frac 1{\Phi_{ev}(t^{n-h})^2}
 \sum_{\substack{\chi\neq \chi_0 \bmod t^{n-h}\\ {\rm even}}}|\EM(n;\mu^2\chi) + \EM(n-1;\mu^2\chi)|^2
\end{equation}
where
\begin{equation}
\EM(n;\mu^2\chi) = \sum_{f\in \mathcal M_n} \mu(f)^2\chi(f) \;.
\end{equation}
%\begin{verbatim}
%consider changing variable \chi \mapsto \chi^2
%\end{verbatim}
To obtain an expression for $\EM(n;\mu^2\chi)$, we consider the
generating function
\begin{equation}
\sum_{n=0}^\infty  \EM(n;\mu^2\chi) u^n = \sum_f \mu(f)^2\chi(f)
u^{\deg f} = \frac{L(u,\chi)}{L(u^2,\chi^2)} \;.
\end{equation}
%Our task is to extract a tractable expression for the RHS.

%We saw that
%$$\sum_{n=0}^\infty \EM(n;\mu^2\chi) u^n = \frac{L(u,\chi)}{L(u^2,\chi^2)}$$
Assume that $\chi$ is primitive, and that $\chi^2$ is
also\footnote{If $q$ is odd then primitivity of $\chi$ and of
$\chi^2$ are equivalent. } primitive (modulo $t^{n-h}$).
 Then
 \begin{equation*}
 L(u,\chi) = (1-u)
\det(I-uq^{1/2}\Theta_\chi),\quad L(u^2,\chi^2) =
(1-u^2)\det(I-u^2q^{1/2}\Theta_{\chi^2}) \;.
\end{equation*}
Writing for $U\in U(N)$
\begin{equation}
\det(I-xU) = \sum_{j=0}^N \lambda_j(U) x^j, \quad \frac
1{\det(I-xU)} = \sum_{k=0}^\infty \tr\Sym^k U x^k
\end{equation}
gives, on abbreviating
$$\lambda_j(\chi):=\lambda_j( \Theta_\chi) , \quad
\sym^k(\chi^2) = \tr\sym^k \Theta_{\chi^2}
$$
that
\begin{equation*}
\begin{split}
\frac{L(u,\chi)}{L(u^2,\chi^2)}& = \frac{  \det(I-uq^{1/2}\Theta_\chi)}{(1+u) \det(I-u^2q^{1/2}\Theta_{\chi^2})}\\
&= \sum_{m=0}^\infty \sum_{0\leq j\leq N} \sum_{k=0}^\infty (-1)^{m}
\lambda_j(\chi)   \Sym^k (\chi^2) q^{(j+k)/2} u^{m+j+2k}
%\\&= \sum_{n=0}^\infty u^n(-1)^n \sum_{\substack{j+2k\leq n\\ 0\leq
%j\leq N\\ k\geq 0}} \lambda_j(\chi) \tr \Sym^k (\chi^2) q^{(j+k)/2}
\end{split}
\end{equation*}
and hence
\begin{equation}
\EM(n;\mu^2\chi) = (-1)^n  \sum_{\substack{j+2k\leq n\\ 0\leq j\leq
N\\ k\geq 0}} (-1)^j\lambda_j(\chi)   \Sym^k (\chi^2) q^{(j+k)/2}
\;.
\end{equation}
Therefore
\begin{equation*}
\begin{split}
\EM(n;\mu^2\chi) +\EM(n-1;\mu^2\chi)&= (-1)^n
\sum_{\substack{j+2k\leq  n\\ 0\leq j\leq N\\ k\geq 0}} (-1)^j
\lambda_j(\chi)   \Sym^k (\chi^2) q^{\frac{j+k}2}  \\
&+ (-1)^{n-1} \sum_{\substack{j+2k\leq n-1\\ 0\leq j\leq N\\ k\geq
0}} (-1)^j\lambda_j(\chi)   \Sym^k (\chi^2) q^{\frac{j+k}2}
\\
&=(-1)^n  \sum_{\substack{j+2k =n\\ 0\leq j\leq N\\ k\geq 0}}
(-1)^j\lambda_j(\chi)  \Sym^k (\chi^2) q^{\frac{j+k}2}\\
&= q^{n/4}\sum_{\substack{0\leq j\leq N\\j=n\bmod 2}}
\lambda_j(\chi) \sym^{\frac{n-j}2}(\chi^2) q^{j/4} \;.
\end{split}
\end{equation*}

Therefore, recalling that $N=n-h-2$,
\begin{multline*}
\EM(n;\mu^2\chi) +\EM(n-1;\mu^2\chi) \\= (-1)^n\begin{cases}
q^{\frac n2-\frac{h+1}4-\frac 14}
\lambda_N(\chi)\sym^{ \frac{h+2}2}(\chi^2),& n=N\bmod 2 \\
q^{\frac n2-\frac{h+1}4-\frac 12} \lambda_{N-1}(\chi)
\sym^{\frac{h+3}2}(\chi^2),& n\neq N\bmod 2
\end{cases}
  \times (1 + O(q^{-1/2}).
\end{multline*}
Noting that $n=N\bmod 2$ is equivalent to $h$ even, we finally
obtain
\begin{multline}\label{M(n)+M(n-1)}
|\EM(n;\mu^2\chi) +\EM(n-1;\mu^2\chi) |^2\\=
\begin{cases} q^{  n -\frac{h+1}2-\frac 12}
|\lambda_N(\chi)\sym^{ \frac{h+2}2}(\chi^2)|^2,& h\;{\rm even} \\
q^{n-\frac{h+1}2-1} |\lambda_{N-1}(\chi)
\sym^{\frac{h+3}2}(\chi^2)|^2,& h\;{\rm odd}
\end{cases}
  \times (1 + O(q^{-1/2})\;.
\end{multline}

%We wish to take the highest power of $q$. A computation shows that
%\begin{equation}
%\max_{\substack{ j+2k\leq n\\  0\leq j\leq N=n-(h+2)\\  k\geq
%0}}(j+k) = \begin{cases}
%n-\frac{h+2}2,& (j,k)=(N,\frac{n-N}2 = \frac h2+1), h \mbox{ even} \\
%n-\frac{h+3}2,& (j,k)=(N-1,\frac{n-N+1}2 = \frac{ h+3}2) \mbox{ or
%}\\ &(N,\frac{n-N-1}2=\frac{h+1}2), h \mbox{ odd}
%\end{cases}
%\end{equation}
%(recall $N=n-h-2$).

Inserting \eqref{M(n)+M(n-1)} into \eqref{formula for variance}
gives an expression for the variance, up to terms which are smaller
by $q^{-1/2}$. The contribution of non-primitive characters is
bounded as in previous sections and we skip this verification. We
separate cases according to $h$ even or odd.

\subsubsection{$h$ even}
%Assume first that $h$ is even. Then we find
%\begin{equation}
%\EM(n;\mu^2\chi) = (-1)^n \det(\Theta_\chi) \tr \Sym^{\frac
%h2+1}(\Theta_{\chi^2}) q^{\frac n2-\frac{(h+2)}4}(1+O(q^{-1/2}))
%\end{equation}
%and hence
%\begin{equation}
%|\EM(n;\mu^2\chi)|^2 = \frac{q^n}{q^{\frac{h+2}2}}\left | \tr
%\Sym^{\frac h2+1}(\Theta_{\chi^2})\right|^2 \Big(1+O(q^{-1/2}) \Big)
%\end{equation}

We have $|\lambda_N(\chi)|=|\det \Theta_\chi|=1$, so that
\begin{equation}
\Var{\EN_{\mu^2}(\bullet;h)} \sim q^{\frac h2} \frac
1{\Phi_{ev}(t^{n-h}) }
 \sum_{\substack{\chi \bmod t^{n-h}\\  {\rm primitive\; even}}}
|\sym^{ \frac{h+2}2}(\chi^2)|^2 \;.
\end{equation}
Here   change variables $\chi \mapsto \chi^2$, which is an
automorphism of the group of even characters if $q$ is odd, since
then the order of the group is $\Phi_{ev}(t^{n-h}) = q^{n-h-1}$ is
odd.
 Using Theorem~\ref{thm:KatzKR even}(i) for even primitive characters modulo $t^{n-h}$
 allows us to
replace the average over characters by a matrix integral, leading to
\begin{equation}
\Var{\EN_{\mu^2}(\bullet;h)} \sim q^{\frac h2}
\int\limits_{U(n-h-2)} \left| \tr \Sym^{\frac h2+1} U \right|^2 dU
\;.
\end{equation}
Since the symmetric powers $\Sym^k$ are irreducible representations,
the matrix integral works out to be $1$. Hence (with $H=q^{h+1}$)
\begin{equation}
\Var{\EN_{\mu^2}(\bullet;h)} \sim q^{  h/2} =
\frac{\sqrt{H}}{\sqrt{q}} \;.
\end{equation}

%\begin{verbatim}
%We need to argue that $\chi $ and $\chi^2$ non-primitive give a
%negligible contribution.
%\end{verbatim}

\subsubsection{$h$ odd}
Next, assume $h$ is odd. Then
%\begin{multline*}
%\EM(n;\mu^2\chi) = \left\{ \Sc_N(\Theta_\chi) \tr
%\Sym^{\frac{h+1}2}(\Theta_{\chi^2})   + \Sc_{N-1}(\Theta_\chi) \tr
%\Sym^{\frac{h+3}2}(\Theta_{\chi^2})
%\right\} \\
%\times (-1)^n q^{\frac n2-\frac{h+3}4} (1+O(q^{-1/2}))
%\end{multline*}
%and recalling that $\Sc_{N-1}(U) = \det(U) \overline{\tr U}$ gives
%\begin{equation}
%|\EM(n;\mu^2\chi)|^2 = \frac{q^n}{q^{\frac{h+3}2}} \left| \tr
%\Sym^{\frac{h+1}2}(\Theta_{\chi^2}) + \overline{\tr \Theta_\chi} \tr
%\Sym^{\frac{h+3}2}(\Theta_{\chi^2})\right|^2 (1+O(q^{-1/2}))
%\end{equation}
\begin{equation}
\Var{\EN_{\mu^2}(\bullet;h)} \sim q^{\frac {h-1}2} \frac
1{\Phi_{ev}(t^{n-h}) }
 \sum_{\substack{\chi \bmod t^{n-h}\\  {\rm primitive\; even}}}
|\lambda_{N-1}(\chi) \sym^{\frac{h+3}2}(\chi^2)|^2 \;.
\end{equation}
Note that $|\lambda_{N-1}(U)|=|\tr U|$, because $\lambda_{N-1}(U) =
(-1)^{N-1} \det U \tr U^{-1}$ and for unitary
matrices, $|\det U|=1$ and $\tr U^{-1}  = \overline{\tr U}$.

We now use Theorem~\ref{thm:KatzKR even}(ii), which asserts that, for
$0\leq h\leq n-6$ and $q\to \infty$ with $q$ coprime to $6$, both
$\Theta_\chi$ and $\Theta_{\chi^2}$ are uniformly distributed in
$PU(n-h-2)$   and that $\Theta_\chi$, $\Theta_{\chi^2}$ are {\em
independent}. We obtain
\begin{equation}
\begin{split}
\Var{\EN_{\mu^2}(\bullet;h)} &\sim
q^{\frac{h-1}2}\int\limits_{U(n-h-2)}|\tr U|^2dU
\int\limits_{U(n-h-2)}|\tr\sym^{\frac{h+3}2}U'|^2dU' \\
&=\frac{\sqrt{H}}{q}
\end{split}
\end{equation}
by irreducibility of the  symmetric power representations.
\end{proof}

\section{squarefrees  in arithmetic progressions}

As in previous sections, we set
\begin{equation}
\ES(A) = \sum_{\substack{f=A\bmod Q\\ f\in \mathcal M_n}}
\mu^2(f)\;.
\end{equation}
%For simplicity, we consider here the case of prime modulus. ??????
%\marginpar{squarefree}

We have the expected value
\begin{equation}
\ave{\ES}_Q = \frac 1{\Phi(Q)} \sum_{\substack{ f\in \mathcal M_n\\
(f,Q)=1}} \mu^2(f) \sim \frac{q^n/\zeta_q(2)}{\Phi(Q)}\sim
\frac{q^n}{|Q|}
\end{equation}
and the variance
\begin{equation}
\Var_Q(\ES) = \frac 1{\Phi(Q)^2} \sum_{\chi\neq \chi_0}
|\EM(n;\mu^2\chi)|^2\;.
\end{equation}

\begin{thm}
Fix $N\geq 1$. For any sequence of finite fields $\fq$, with $q$ odd, %$q=p^e$, of characteristic $p>2$,
and squarefree polynomials $Q\in \fq[t]$  with $\deg Q=N+1$, as
$q\to \infty$,
$$
\Var_Q(\ES)\sim  \frac {q^{n/2}}{|Q|^{1/2} } \times
\begin{cases}
 1/\sqrt{q}, & n\neq \deg Q\bmod 2 \\
 \\   1/q,  &  n= \deg Q\bmod 2\;.
\end{cases}
$$
\end{thm}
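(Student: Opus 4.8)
\emph{Plan.} The argument runs parallel to the short-interval computation of \S\ref{sec:var sf si}, with odd primitive characters modulo the squarefree polynomial $Q$ replacing the even primitive characters modulo $t^{n-h}$, and with the simplification that an odd Dirichlet L-function carries no trivial zero. Throughout I assume $n\geq\deg Q$ (for $n<\deg Q$ the statement reduces to the elementary estimate \eqref{elt claim}); write $N=\deg Q-1$. The starting point is the exact variance formula \eqref{gen formula for varianceAP},
\begin{equation*}
\Var_Q(\ES)=\frac{1}{\Phi(Q)^2}\sum_{\chi\neq\chi_0}|\EM(n;\mu^2\chi)|^2 ,
\end{equation*}
together with the generating identity $\sum_{m\geq0}\EM(m;\mu^2\chi)u^m=L(u,\chi)/L(u^2,\chi^2)$ already used in \S\ref{sec:var sf si}.

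For $\chi$ odd and primitive modulo $Q$ --- and then, $q$ being odd, $\chi^2$ is again odd and primitive except for a proportion $O(1/q)$ of the $\chi$, which I fold into the error --- the Riemann Hypothesis gives $L(u,\chi)=\det(I-uq^{1/2}\Theta_\chi)$ and $L(u^2,\chi^2)=\det(I-u^2q^{1/2}\Theta_{\chi^2})$ with $\Theta_\chi,\Theta_{\chi^2}\in U(N)$. Expanding $\det(I-xU)=\sum_j\lambda_j(U)x^j$ and $\det(I-xU)^{-1}=\sum_k\tr\Sym^kU\,x^k$ and extracting the coefficient of $u^n$ gives
\begin{equation*}
\EM(n;\mu^2\chi)=\sum_{\substack{0\leq j\leq N\\ j\equiv n\bmod 2}}\lambda_j(\Theta_\chi)\,\tr\Sym^{\frac{n-j}{2}}\Theta_{\chi^2}\,q^{\frac{n+j}{4}}.
\end{equation*}
Consecutive admissible $j$ differ by $2$, hence consecutive terms differ in size by $q^{1/2}$, so the sum is dominated by its top term: $j=N$ when $n\not\equiv\deg Q\bmod 2$ (equivalently $n\equiv N\bmod 2$) and $j=N-1$ when $n\equiv\deg Q\bmod 2$. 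Using $|\lambda_N(U)|=|\det U|=1$ in the first case, and $\lambda_{N-1}(U)=(-1)^{N-1}\det U\,\tr U^{-1}$ so that $|\lambda_{N-1}(U)|=|\tr U|$ in the second, I obtain
\begin{equation*}
|\EM(n;\mu^2\chi)|^2=\bigl(1+O(q^{-1/2})\bigr)\times
\begin{cases}
q^{\frac{n+N}{2}}\,\bigl|\tr\Sym^{\frac{n-N}{2}}\Theta_{\chi^2}\bigr|^2, & n\not\equiv\deg Q\bmod 2,\\
q^{\frac{n+N-1}{2}}\,|\tr\Theta_\chi|^2\,\bigl|\tr\Sym^{\frac{n-N+1}{2}}\Theta_{\chi^2}\bigr|^2, & n\equiv\deg Q\bmod 2.
\end{cases}
\end{equation*}

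Next I would insert this into the variance formula. The characters that are not odd and primitive, together with the $O(\Phi(Q)/q)$ values of $\chi$ for which $\chi^2$ fails to be primitive, are disposed of by the same L-function bounds used in \S\ref{Sec:mob AP} and \S\ref{sec:var sf si}: there are only $O(\Phi(Q)/q)$ of them, each $\EM(n;\mu^2\chi)$ is suitably small, so their total contribution is of lower order. It then remains to average the displayed main term over the $\sim\Phi(Q)$ odd primitive characters. Since $q$ is odd and $\deg Q=N+1\geq 2$, Theorem~\ref{thm:KatzKR odd}(ii) asserts that the pairs $(\Theta_\chi,\Theta_{\chi^2})$ become equidistributed in $U(N)\times U(N)$ as $q\to\infty$; applying this --- and, in the first case, the equidistribution of the marginal $\Theta_{\chi^2}$ --- converts the character average into matrix integrals. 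By irreducibility of the symmetric power representations (Schur--Weyl) and the identity $\int_{U(N)}|\tr U|^2\,dU=1$, every matrix integral arising equals $1$, whence
\begin{equation*}
\Var_Q(\ES)\sim\frac{q^{(n+N)/2}}{\Phi(Q)}\quad\text{resp.}\quad\frac{q^{(n+N-1)/2}}{\Phi(Q)} .
\end{equation*}
Finally $\Phi(Q)/|Q|=\prod_{P\mid Q}(1-|P|^{-1})=1+O(1/q)$ with $\deg Q$ fixed, so $\Phi(Q)$ may be replaced by $|Q|=q^{N+1}$, and substituting this turns the two expressions into $q^{n/2}|Q|^{-1/2}q^{-1/2}$ and $q^{n/2}|Q|^{-1/2}q^{-1}$, which are exactly the two cases $n\neq\deg Q\bmod 2$ and $n\equiv\deg Q\bmod 2$ of the statement.

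\emph{Main obstacle.} The substantive analytic input --- equidistribution and, crucially, the joint independence of $\Theta_\chi$ and $\Theta_{\chi^2}$ --- is quoted directly from Theorem~\ref{thm:KatzKR odd}(ii); without it the case $n\equiv\deg Q\bmod 2$, in which both $\tr\Theta_\chi$ and $\tr\Sym^{(n-N+1)/2}\Theta_{\chi^2}$ survive, could not be treated. The genuine labour is thus the bookkeeping: identifying which term of the $j$-sum dominates and how its shape flips with the parity of $n-\deg Q$, noting $|\lambda_{N-1}(U)|=|\tr U|$ so that only the elementary matrix integral remains, and carrying out the routine-but-delicate estimation of the characters that are not odd-and-primitive and of those for which $\chi^2$ degenerates.
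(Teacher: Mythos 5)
Your proposal is correct and follows essentially the same route as the paper: the variance identity \eqref{gen formula for varianceAP}, the generating function $L(u,\chi)/L(u^2,\chi^2)$, extraction of the coefficient of $u^n$ with the dominant $j=N$ or $j=N-1$ term according to the parity of $n-\deg Q$, the observation $|\lambda_{N-1}(U)|=|\tr U|$, the $O(\Phi(Q)/q)$ bound for characters that are not odd and primitive (or whose square degenerates), and finally Katz's joint equidistribution Theorem~\ref{thm:KatzKR odd}(ii) with the matrix integrals equal to $1$ by irreducibility. The bookkeeping and the final normalization $\Phi(Q)\sim|Q|$ also match the paper's computation.
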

\begin{proof}
The generating function of $\EM(n;\mu^2\chi)$ is
\begin{equation}
\sum_{n=0}^\infty \EM(n;\mu^2\chi)u^n  = \sum_f
\mu(f)^2\chi(f)u^{\deg f} = \frac{L(u,\chi)}{L(u^2,\chi^2)}\;.
\end{equation}
If both $\chi$, $\chi^2$ are primitive, odd, characters (which
happens for almost all $\chi$), then
\begin{equation}
L(u,\chi) = \det (I-u q^{1/2}\Theta_\chi), \quad L(u^2,\chi^2) =
\det (I-u^2 q^{1/2}\Theta_{\chi^2})
\end{equation}
and writing (with $N=\deg Q-1$)
\begin{equation}
\det (I-u q^{1/2}\Theta_\chi) =\sum_{j=0}^{N} \lambda_j(\chi)
q^{j/2} u^j
\end{equation}
%where we have set
%\begin{equation}
%\lambda_j(\chi) = (-1)^j\Sc_j(\Theta_\chi)
%\end{equation}
%(recall $\det(I+xU) = \sum_{j=0}^N \Sc_j(U)x^j$). Moreover
\begin{equation}
\frac 1{\det(I-q^{1/2}u^2\Theta_{\chi^2})} = \sum_{k=0}^\infty
\sym^k(\chi^2) q^{k/2}u^{2k}
\end{equation}
%where
%\begin{equation}
%\sym^k(\chi^2) = \tr \Sym^k \Theta_{\chi^2}
%\end{equation}
we get, for $n\geq N$, (which is the interesting range)
\begin{equation}
\begin{split}
\EM(n;\mu^2\chi) &= \sum_{\substack{j+2k=n\\0\leq j \leq N\\k\geq
0}} \lambda_j(\chi)\sym^k(\chi^2) q^{\frac{j+k}2} \\
& = q^{n/4} \sum_{ \substack{ j=0\\j=n\bmod 2}}^N \lambda_{j}(\chi)
\sym^{\frac{n-j}2}(\chi^2) q^{j/4}
\end{split}
\end{equation}
and hence
\begin{multline}
\EM(n;\mu^2\chi) =\\(1+O(q^{-\frac 12}) ) \times q^{\frac{n+N}4}
\begin{cases} \lambda_N(\chi)
\sym^{\frac{n-N}2}(\chi^2), & n=N\bmod 2\\
  \\
  q^{-\frac
14}\lambda_{N-1}(\chi)\sym^{\frac{n-N+1}2}(\chi^2), & n\neq N\bmod
2\end{cases}
\end{multline}
Since $\lambda_N(\chi) = \det \Theta_\chi$, which has absolute value
one, we find
\begin{multline}
|\EM(n;\mu^2\chi)|^2 =\\  (1+O(q^{- \frac 12})) \times
q^{\frac{n+N}2}\begin{cases} |\sym^{\frac{n-N}2}(\chi^2)|^2, &
n=N\bmod 2\\
\\ q^{- \frac 12}|\lambda_{N-1}(\chi)\sym^{\frac{n-N+1}2}(\chi^2)|^2, & n\neq
N\bmod 2\;.\end{cases}
\end{multline}

If $\chi\neq \chi_0$ is not odd, or not primitive, we may use the
same computation to show that
\begin{equation}
|\EM(n;\mu^2\chi)| \ll_n q^{(n+N)/4},\quad \chi\neq \chi_0\;{\rm
even\; or \; imprimitive}.
\end{equation}
We thus have a formula for $\Var(\ES)$.  We may neglect the
contribution of characters $\chi$ for which $\chi$ or $\chi^2$ are
non-primitive, or even as these form a proportion $\leq 1/q$ of all
characters, and thus their contribution is
$$
\ll \frac 1{\Phi(Q)}\frac 1q q^{(n+N)/2} \ll \frac 1q \frac
{q^{n/2}}{|Q|^{1/2}\sqrt{q}}
$$
which is negligible relative to the claimed main term in the
Theorem.

To handle the contribution of primitive odd characters we invoke
Theorem~\ref{thm:KatzKR odd}(ii) which asserts that  both $\Theta_\chi$
and $\Theta_{\chi^2}$ are uniformly distributed in $U(\deg Q-1)$ and
are independent (for $q$ odd). To specify the implications, we
separate into cases:

If $n=N \bmod 2$ (i.e. $n\neq \deg Q\bmod 2$) then
\begin{equation}
 \Var_Q(\ES) \sim \frac {q^{n/2}}{|Q|^{1/2}q^{1/2}} \cdot \frac 1{\Phi(Q)}
\sum_{\chi,\chi^2 \mbox{ primitive}}|\sym^{\frac{n-N}2}(\chi^2)|^2
\;.
\end{equation}
By equidistribution
\begin{equation}
\frac 1{\Phi(Q)} \sum_{\chi,\chi^2 \mbox{
primitive}}|\sym^{\frac{n-N}2}(\chi^2)|^2 \sim  \int_{U(N)} \left|
\tr \Sym^{\frac{n-N}2} U\right|^2 dU\;.
\end{equation}
Note that $\int_{U(N)} \left| \tr \Sym^{\frac{n-N}2} U\right|^2
dU=1$ by irreducibility of $\Sym^k$. Thus we obtain
\begin{equation}
\Var_Q(\ES)\sim \frac {q^{n/2}}{|Q|^{1/2}q^{1/2}}, \quad n\neq \deg
Q\bmod 2\;.
\end{equation}
%A  problem with this reasoning is that the map $\chi\mapsto \chi^2$
%on Dirichlet characters mod $Q$  is neither one-to-one, nor is it
%surjective if $q$ is odd. The size of its kernel and co-kernel have
%to do with the highest power of $2$ dividing $\Phi(Q) = q^{\deg
%Q}-1$ (recall $Q$ is assumed prime), and if the map is not
%surjective on the set of odd characters we do not know how to use
%equidistribution.

%At this point we only have an upper bound
%\begin{equation}\label{upper bd on var  even case}
%\Var (\ES) \ll_{n,\deg Q} \frac {q^{n/2}}{|Q|^{1/2}q^{1/2}}
%\end{equation}

If $n\neq N\bmod 2$ (i.e. $n=\deg Q\bmod 2$), then we get
\begin{equation}
\Var_Q(\ES)\sim \frac {q^{n/2}}{q|Q|^{1/2}} \frac 1{\Phi(Q)}
\sum_{\chi,\chi^2 \mbox{ primitive}}
\left|\lambda_{N-1}(\chi)\sym^{\frac{n-N+1}2}(\chi^2)\right|^2\;.
\end{equation}
Note that as in \S~\ref{sec:var sf si}, $|\lambda_{N-1}(\chi)| =
|\tr \Theta_\chi|$. By Theorem~\ref{thm:KatzKR odd}(ii),
\begin{multline}
 \frac 1{\Phi(Q)}\sum_{\chi,\chi^2 \mbox{ primitive}}
\left|\lambda_{N-1}(\chi)\sym^{\frac{n-N+1}2}(\chi^2)\right|^2 \sim
\\
\iint_{U(N)\times U(N)} \left|\tr U\right|^2 \cdot
\left|\tr\sym^{\frac{n-N+1}2}U' \right|^2dUdU' = 1
\end{multline}
and hence
\begin{equation}
\Var_Q(\ES)\sim \frac {q^{n/2}}{q|Q|^{1/2}} \;.
\end{equation}

%\begin{verbatim} At this point we do not know how to use
%equidistribution to evaluate the chi average
%\end{verbatim}

%In particular we get an upper bound
%\begin{equation}
%\Var(\ES)\ll_{n,\deg Q} \frac{q^{n/2}}{q|Q|^{1/2}} ,\quad n=\deg Q
%\bmod 2
%\end{equation}
%which is of smaller order than \eqref{upper bd on var even case} for
%the case $n=N\bmod 2$.
%
%Assuming that $\Theta_\chi$, $\Theta_{\chi^2}$ are both uniformly
%distributed in $U(\deg Q-1)$ and {\em independent} would give
Thus we find
\begin{equation}
\Var_Q(\ES)\sim \begin{cases} \frac {q^{n/2}}{|Q|^{1/2}q^{1/2}},
\quad n\neq \deg Q\bmod 2 \\    \\
\frac {q^{n/2}}{|Q|^{1/2}q  }, \quad n= \deg Q\bmod 2
\end{cases}
\end{equation}
as claimed.
\end{proof}

%\newpage

\appendix
\section{Hall's theorem for $\fq[t]$: The large degree limit}
\label{appendix}

Let $Q(n,H)$ be the number of squarefree integers in an interval of
length $H$ about $n$:
\begin{equation}
Q(n,H):=%\#\{N-\frac 12 H<n\leq n+\frac 12 H: n \mbox{ squarefree}\}=
\sum_{j=1}^H \mu^2(n+j)\;.
\end{equation}
Hall \cite{Hall} studied the variance of $Q(n,H)$ as $n$ varies up
to $X$. He showed that provided $H=O(X^{2/9-o(1)})$, the variance
grows like $\sqrt{H}$ and in fact
%of $Q(n,H)$
admits an asymptotic formula:
\begin{equation}\label{Hall thm}
\frac 1X \sum_{n\leq X} \left |Q(n,H) - \frac H{\zeta(2)}\right |^2
\sim A\sqrt{H}
\end{equation}
with
\begin{equation}\label{Hall constant}
A = \frac{\zeta(3/2)}\pi \prod_p(1-  \frac3{p^2}+\frac{2}{p^3})\;.
\end{equation}

We give a version of Hall's theorem for the polynomial ring $\fq[t]$
with $q$ fixed. Let
$$
\EN(A) = \sum_{|f-A|\leq q^h} \mu^2(f)
$$
be the number of squarefree polynomials in a short interval $I(A;h)$
around $A\in \EM_n$, with $h\leq n-2$. Note that
$$\#I(A;h) = q^{h+1}=:H \;.$$
We wish to compute the variance of $\EN$ as we average over all
short intervals with $q$ fixed and $n\to \infty$.

Let
\begin{equation}
\beta_q =\prod_P(1-\frac3{|P|^2}+\frac{2}{|P|^3})\;.
\end{equation}
Our result is
%assuming  certain  Hardy-Littlewood heuristics are  valid, then
\begin{thm}\label{Thm Hall}
As $h\to \infty$,
%\fbox{HOW DOES $h$ depend on $n$ ???}
\begin{equation*}
%\begin{split}
\Var \EN = \sqrt{H}
 \frac{\beta_q}{1-\frac 1{q^3}}
\begin{cases}
\frac {1+\frac 1{q^2}}{\sqrt{q}},& h\;{\rm even}\\    \\
\frac{1+\frac 1q}{q},&h\;{\rm odd}
\end{cases}    + O(\frac{H^2n}{q^{n/3}}) +O_q(H^{1/4+o(1)}) \;.
%\\
%& = q^h (\frac 1{\zeta(2)} -\beta  \frac{ 1+\frac 1q }{1-\frac
%1{q^3}}) + \frac{\beta}{1-\frac 1{q^3}} q^{(h+1)/2}
%\begin{cases}
%\frac {1+\frac 1{q^2}}{\sqrt{q}},& h\;{\rm even}\\    \\
%\frac{1+\frac 1q}{q},&h\;{\rm odd}
%\end{cases}
%\end{split}
   \end{equation*}
\end{thm}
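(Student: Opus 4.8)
The plan is to adapt Hall's classical sieve argument to $\fq[t]$, exploiting the rigidity of short intervals modulo prime powers: over $\fq[t]$ the set $\mathcal P_{\le h}$ meets every residue class modulo $e^2$ equally often as soon as $2\deg e\le h+1$. First I would write $\mu^2(f)=\sum_{e^2\mid f}\mu(e)$, so that $\EN(A)=\sum_{\deg e\le n/2}\mu(e)\,r_e(A)$ with $r_e(A):=\#\{f\in I(A;h):e^2\mid f\}$ (the cut $\deg e\le n/2$ is forced by $\deg f=n$). Dividing $A$ by $e^2$ shows that $r_e(A)=H/|e|^2$ \emph{exactly} when $2\deg e\le h+1$, while $r_e(A)\in\{0,1\}$ when $2\deg e\ge h+2$ (namely $r_e(A)=1$ iff the remainder of $A$ modulo $e^2$ has degree $\le h$). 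Since $A\bmod e^2$ equidistributes as $A$ ranges over $\EM_n$ (for $2\deg e\le n$), one has $\langle r_e\rangle=H/|e|^2$ in every case, whence $\langle\EN\rangle=H/\zeta_q(2)+O(Hq^{-n/2})$ and
\[
\EN(A)-\langle\EN\rangle=\sum_{\substack{e\text{ monic}\\ k_0\le\deg e\le \lfloor n/2\rfloor}}\mu(e)\,(r_e(A)-H/|e|^2),\qquad k_0:=\lceil (h+2)/2\rceil,
\]
a sum of centered $0/1$ indicators. The single integer $k_0$ — equal to $h/2+1$ for $h$ even and to $(h+3)/2$ for $h$ odd — is the entire source of the parity dichotomy.

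Squaring and averaging over $A\in\EM_n$ gives
\[
\Var\EN=\sum_{e_1,e_2}\mu(e_1)\mu(e_2)\,V(e_1,e_2),\qquad V(e_1,e_2):=\langle r_{e_1}r_{e_2}\rangle-\frac{H^2}{|e_1|^2|e_2|^2},
\]
over monic $e_i$ with $k_0\le\deg e_i\le\lfloor n/2\rfloor$. I would evaluate $\langle r_{e_1}r_{e_2}\rangle$ from the joint distribution of the remainders $A\bmod e_i^2$: with $D=\gcd(e_1,e_2)$ and $L=\lcm(e_1^2,e_2^2)$, when $\deg L\le n$ the class $A\bmod L$ equidistributes exactly, and a CRT count yields $V(e_1,e_2)=0$ unless $2\deg D\ge h+2$, in which case $V(e_1,e_2)=H(|D|^2-H)/(|e_1|^2|e_2|^2)$. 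This vanishing — the fluctuations attached to essentially coprime moduli are genuinely independent — is the function-field incarnation of the decoupling at the core of Hall's argument, and confines the main sum to the thin family of pairs sharing a large common factor. The remaining pairs, with $\deg L>n$, are handled separately: expanding the conditions $\{\deg(A\bmod e_i^2)\le h\}$ by additive characters over $\fq[t]$ and using orthogonality shows their total contribution is $O(H^2n\,q^{-n/3})$, negligible precisely in the range $h\le(2/9-o(1))n$ — the analogue of Hall's $H=X^{2/9}$.

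For the surviving pairs ($2\deg D\ge h+2$, hence $\deg D\ge k_0$) I would set $e_i=Da_i$ with $D$ squarefree and $a_1,a_2$ squarefree, pairwise coprime and coprime to $D$; the diagonal part ($a_1=a_2=1$) and the off-diagonal part then combine, up to the degree truncations, into
\[
\sum_{\substack{D\text{ squarefree}\\ \deg D\ge k_0}}\mu(D)^2\,\frac{H(|D|^2-H)}{|D|^4}\,\prod_{P\nmid D}(1-2/|P|^2)\;+\;(\text{truncation errors}),
\]
the $(a_1,a_2)$-summation producing the Euler factor $\prod_{P\nmid D}(1-2/|P|^2)$. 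Writing the $D$-sum as a coefficient sum of the generating function $\prod_P(1+x^{\deg P}/(|P|^2-2))$ and reading off its behaviour from the simple pole at $x=q$ (and at $x=q^3$ for the $H^2$-term), the residue constant, multiplied by $\prod_P(1-2/|P|^2)$, collapses factor by factor at each prime into $\beta_q=\prod_P(1-3/|P|^2+2/|P|^3)$; the geometric series $\sum_{k\ge k_0}q^{-k}$ and $\sum_{k\ge k_0}q^{-3k}$ then supply the parity-dependent prefactor $Hq^{-k_0}\in\{\sqrt H\,q^{-1/2},\sqrt H\,q^{-1}\}$ and the bracketed rational factors $1+q^{-2}$ (resp.\ $1+q^{-1}$). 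Assembled, these produce exactly the claimed main term. The error from the next singularity of $\prod_P(1+x^{\deg P}/(|P|^2-2))$, together with the truncation of the $(a_1,a_2)$-sum at degree $\lfloor n/2\rfloor-\deg D$, is $O_q(H^{1/4+o(1)})$, the $o(1)$ absorbing subpolynomial fluctuations of the divisor-type weights.

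The technical heart is the last two steps taken together. The degree constraints $k_0\le\deg e_i\le\lfloor n/2\rfloor$ are not multiplicative, so one must verify that the restriction on the small prime factors of the $e_i$ is inactive in the effective range $\deg e_i\approx k_0$, thereby decoupling the $(a_1,a_2)$- and $D$-summations and assembling $\beta_q$ with its precise rational factor; simultaneously one must keep the character sums governing the $\deg L>n$ pairs under control, so that both error terms come out in the optimal range $h\le(2/9-o(1))n$. The parity phenomenon is not itself an obstacle: it is forced entirely by $k_0=\lceil(h+2)/2\rceil$, i.e.\ by where the sieve sum begins. One could instead start from the character-sum variance formula of Lemma~\ref{covariance lemma}, expressing $\Var(\EN_{\mu^2}(\bullet;h))$ through $\EM(n;\mu^2\chi)=[u^n]\,L(u,\chi)/L(u^2,\chi^2)$; but with $q$ fixed the Katz equidistribution theorems are unavailable, and this route leads back to an essentially equivalent elementary estimation.
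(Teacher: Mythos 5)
Your reorganization of the double sieve sum is a genuinely different route from the paper's: instead of first proving the shifted‑correlation asymptotic of Theorem~\ref{thm hl} and then evaluating $\sum_J \mathfrak s(J)$ by a contour integral (where the parity dichotomy comes from the residues at $u=\pm\sqrt q$), you average over the interval first, so that the centered indicators $r_e(A)-H/|e|^2$ vanish identically for $2\deg e\le h+1$ and the dichotomy is carried by the threshold $k_0=\lceil (h+2)/2\rceil$. The good‑pair covariance identity is correct (including the borderline $2\deg\gcd(e_1,e_2)=h+1$, where $|D|^2=H$ makes the covariance vanish), and if one carries out your generating‑function step, the constant $\alpha\prod_{P\mid D}(1-2/|P|^2)^{-1}$ collapses to $\beta_q$ both for the $|D|^2$‑piece (pole at $u=q$) and for the $H$‑piece (pole at $u=q^3$), and the two geometric series $\sum_{k\ge k_0}q^{-k}$, $\sum_{k\ge k_0}q^{-3k}$ indeed recombine into the theorem's rational factors for both parities, with the next singularity giving $O_q(H^{1/4+o(1)})$. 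So the main‑term half of your plan is sound and equivalent in outcome to the paper's.

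The gap is the treatment of the pairs with $\deg\lcm(e_1^2,e_2^2)>n$. Your one‑sentence appeal to additive characters and orthogonality does not deliver $O(H^2nq^{-n/3})$: once the modulus $L$ has degree exceeding $n$, a nontrivial additive character mod $L$ can be trivial on $\mathcal P_{\le n-1}$, so its sum over $\EM_n$ need not vanish and orthogonality gives nothing; what remains is the trivial count (at most $q^{n-2\max_i\deg e_i}$ monic $f$ of degree $n$ divisible by $e_i^2$, and at most one admissible shift per such $f$), which bounds the total bad‑pair contribution only by $O(n)$. That is not $o(\sqrt H/q)$ when $h$ grows slowly compared with $\log_q n$, whereas the theorem's error terms $O(H^2nq^{-n/3})+O_q(H^{1/4+o(1)})$ are claimed uniformly as $h\to\infty$, however slowly. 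The missing input is an average bound on the rarity of large square divisors — the analogues of the paper's estimates $\sum_f|e_z(f)|\ll q^n/Z$ and $\sum_f|e_z(f)|^2\ll q^n/Z$, the lcm double sums of Lemmas~\ref{double sum d} and \ref{second double sum d}, and the sieve truncation at degree $\approx n/3$ of Proposition~\ref{prop S-Sz} — applied before any pair‑by‑pair decomposition. This is the technical core of Hall's method, and without it your argument establishes the asymptotic only in a strictly smaller range (roughly $h\gg\log_q n$, and with a worse exponent than $2/9$ if one truncates naively at $n/2$), not the statement as claimed.
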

In particular we get an asymptotic result provided $h<(\frac
29-o(1))n$. i.e. $H<(q^n)^{\frac 29-o(1)}$. It is likely that one
can improve the factor $2/9$ a bit.

\subsection{The probability that $f$ and $f+J$ are both squarefree}
As in the number field case, we start with an expression for the
probability that both $f$ and $f+J$ are squarefree. For a non-zero
polynomial $J\in \fq[t]$, define the ``singular series"
 \begin{equation}\label{HL S(J)}
\mathfrak S(J) = \prod_P(1-\frac 2{|P|^2}) \cdot \prod_{P^2\mid
J}\frac{|P|^2-1}{|P|^2-2}\;,
\end{equation}
the product over all prime polynomials.
 We will first show
\begin{thm}\label{thm hl}
 For $0\neq J\in \fq[t]$, $\deg J<n$,
\begin{equation}
S(J;n):=\sum_{f\in \EM_n} \mu^2(f)\mu^2(f+J)  = \mathfrak S(J) q^n +
O(nq^{\frac {2n}3})\;,
\end{equation}
the implied constant absolute, with $\mathfrak S(J)$ given by
\eqref{HL S(J)}.
\end{thm}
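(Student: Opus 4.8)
\emph{Proof plan.} I would begin from the convolution identity $\mu^2(f)=\sum_{a^2\mid f}\mu(a)$ (the sum over monic $a$), which after substitution and reversal of the order of summation gives
\begin{equation*}
S(J;n)=\sum_{a,b\ \mathrm{monic}}\mu(a)\mu(b)\ \#\bigl\{f\in\EM_n:\ a^2\mid f,\ b^2\mid f+J\bigr\}.
\end{equation*}
By the Chinese Remainder Theorem the inner count vanishes unless $\gcd(a,b)^2\mid J$ (and $2\deg a,2\deg b\le n$); in that case $f$ is confined to a single residue class modulo $\lcm(a,b)^2$, so the count equals exactly $q^{\,n-2\deg\lcm(a,b)}$ when $2\deg\lcm(a,b)\le n$ and is $0$ or $1$ otherwise. (A softer route: applying Theorem~\ref{Rudnick thm} to $F(x,t)=a^2x+J$ — after discarding the $a$ for which this has non-squarefree content, since then $a^2x+J$ never takes squarefree values — gives the weaker $S(J;n)=\mathfrak S(J)q^n+O(nq^{n-1})$; reaching the exponent $2/3$ requires the sieve argument below.)

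Next I would pick a truncation parameter $z$ (ultimately $z\asymp n/3$) and split off, according to $\deg a\le z$ or $\deg a>z$, and within the first case according to $\deg b\le z$ or $\deg b>z$. The range $\deg a>z$ contributes $\sum_f\bigl(\sum_{a^2\mid f,\,\deg a>z}\mu(a)\bigr)\mu^2(f+J)$, bounded in absolute value by $\sum_{f\in\EM_n}\#\{a:\deg a>z,\ a^2\mid f\}=\sum_{z<\deg a\le n/2}q^{\,n-2\deg a}\ll q^{\,n-z}$. In the range $\deg a,\deg b\le z$ one discards the pairs with $2\deg\lcm(a,b)>n$ (whose number is $\ll n^{O(1)}q^{\,2z}$) and completes the remaining sum over all monic $a,b$ (tail $\ll n^{O(1)}q^{\,n-z}$), arriving at $q^{\,n}\sum_{\gcd(a,b)^2\mid J}\mu(a)\mu(b)/|\lcm(a,b)|^2$. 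That sum is multiplicative, and computing its Euler factor at $P$ by running through the cases $P\mid a$, $P\mid b$ (with $a,b$ forced squarefree by $\mu$) gives $1-2/|P|^2$ when $P^2\nmid J$ and $1-1/|P|^2$ when $P^2\mid J$, so the sum equals $\prod_{P^2\nmid J}(1-2/|P|^2)\prod_{P^2\mid J}(1-1/|P|^2)=\mathfrak S(J)$. Balancing the two error sizes $q^{\,n-z}$ and $q^{\,2z}$ fixes $z\asymp n/3$, and the dependence on $J$ is absorbed via estimates such as $\sum_{C^2\mid J}|C|^{-1}\ll(\deg J)^{O(1)}\ll n^{O(1)}$; this produces the stated error $O(nq^{2n/3})$.

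The one piece resisting a crude treatment is $\deg a\le z<\deg b$, namely $\sum_f M_z(f)E_z(f+J)$ with $M_z(f)=\sum_{a^2\mid f,\ \deg a\le z}\mu(a)$ and $E_z(g)=\sum_{b^2\mid g,\ \deg b>z}\mu(b)$. Here the small variable $a$ senses only the square part of $f$, the large variable $b$ only that of $f+J$, and these are essentially independent — but this is invisible to pointwise bounds: estimating the count by $1$ for each admissible pair $(a,b)$ overcounts so badly that it yields only $q^{\,5n/6}$. One must instead fix the large square factor, write $f=b^2g-J$ with $g$ monic of degree $n-2\deg b$, and bound $\sum_{g}\#\{a:\deg a\le z,\ a^2\mid b^2g-J\}$ — essentially the mean number of small square divisors of a polynomial running through an arithmetic progression — by a second-moment estimate; summing over $b$ then restores $\ll n^{O(1)}q^{\,n-z}$. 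This sieve step, rather than the Euler-product computation or the bookkeeping of error terms, is where the real work lies; it is the exact counterpart of the key step in the treatments of the integer problem by Mirsky and by Hall.
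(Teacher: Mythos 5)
Your skeleton (the convolution identity, truncation at $z\approx n/3$, completion of the small--small sum, and the Euler-factor computation, which indeed reproduces $\mathfrak S(J)$) matches the paper's argument, and those parts are sound. The genuine gap is exactly where you announce "the real work lies": the mixed range $\deg a\le z<\deg b$ is never actually controlled. You rightly note that bounding pair-by-pair gives only $q^{5n/6}$, but the remedy you sketch --- fix $b$, write $f=b^2g-J$, and bound $\sum_g\#\{a:\deg a\le z,\ a^2\mid b^2g-J\}$ ``by a second-moment estimate'' --- is not carried out, and as described it meets the same obstruction: when $\deg b$ is close to $n/2$ the modulus $a^2$ can have degree exceeding $\deg g=n-2\deg b$, and the per-pair $O(1)$ boundary contributions again total about $q^{z+n/2}=q^{5n/6}$ unless you inject an additional input (e.g.\ a divisor bound for the square divisors of the fixed nonzero polynomial $b^2g-J$, or a Cauchy--Schwarz step). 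So as written the proposal does not reach the stated error $O(nq^{2n/3})$.

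Moreover, this difficulty is an artifact of your asymmetric decomposition; the paper (following Hall, whose proof you mischaracterize as requiring a Mirsky-type sieve) avoids it entirely. Writing $\mu^2=\mu^2_z+e_z$ and arranging the expansion so that the cross terms carry the \emph{full} $\mu^2\le 1$ rather than the truncated sum, one gets the pointwise bound \eqref{ineq for mu2}, namely $|\mu^2(f)\mu^2(f+J)-\mu^2_z(f)\mu^2_z(f+J)|\le |e_z(f)|+|e_z(f+J)|+\tfrac12|e_z(f)|^2+\tfrac12|e_z(f+J)|^2$. The mixed contributions are then killed by the trivial first moment $\sum_{f\in\EM_n}|e_z(f)|\ll q^{n-z}$, and the doubly-large term by the elementary lcm estimate $\sum_f|e_z(f)|^2\ll q^n\sum_{\deg d_1,\deg d_2\ge z}|[d_1,d_2]|^{-2}\ll q^{n-z}$ of Lemma~\ref{double sum d} --- nothing deeper is needed. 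If you prefer to keep your decomposition, simply observe that $M_z(f)=\mu^2(f)-\sum_{a^2\mid f,\ \deg a>z}\mu(a)$, so your problematic term splits as $\sum_f\mu^2(f)E_z(f+J)$ minus a term with both divisors large; the first is handled by the first-moment bound and the second by Cauchy--Schwarz together with the same lcm lemma, giving $\ll q^{n-z}$ at once.
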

Note that Theorem~\ref{thm hl} is uniform in $J$ as long as $\deg
J<n$.

Theorem~\ref{thm hl} is the exact counterpart for the analogous
quantity over the integers, which has been known in various forms
since the 1930's. The proof below is roughly the same as the one
given in  \cite[Theorem 1]{Hall}. The  exponent $2/3$ has been
improved, by Heath Brown \cite{Heath Brown} to $7/11$ and by Reuss
\cite{Reuss} to about $0.578\ldots$.

\subsection{A decomposition of $\mu^2$}
We start with the identity
\begin{equation}
\mu^2(f) = \sum_{d^2\mid f} \mu(d)
\end{equation}
(the sum over monic $d$). Pick an integer  parameter $0<z\leq n/2$,
write $Z=q^z$, and decompose the sum into two parts, one over
"small" divisors, that is with $\deg d<z$, and one over "large"
divisors:
\begin{equation}
\mu^2 = \mu^2_z+e_z
\end{equation}
\begin{equation}
\mu^2_z(f) = \sum_{\substack{d^2\mid f\\ \deg d<z}}\mu(d),\quad
e_z(f)= \sum_{\substack{d^2\mid f\\ \deg d\geq z}}\mu(d) \;.
\end{equation}
Let
\begin{equation}
S_z(J;n):=\sum_{f\in \EM_n} \mu^2_z(f )\mu^2_z(f+J)\;.
\end{equation}
We want to replace $S$ by $S_z$.

\subsection{Bounding $S(J;n)-S_z(n;J)$}
\begin{prop}\label{prop S-Sz}
If $z\leq n/2$ then
$$
\Big| S(J;n)-S_z(J;n)\Big| \ll \frac{q^n}{Z}\;,
$$
where $Z=q^z$.
\end{prop}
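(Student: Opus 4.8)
The plan is to start from
\[
S(J;n)-S_z(J;n)=\sum_{f\in\EM_n}\bigl(\mu^2(f)\mu^2(f+J)-\mu_z^2(f)\mu_z^2(f+J)\bigr)
\]
and exploit the splitting $\mu^2=\mu_z^2+e_z$. Using the telescoping identity $ab-a'b'=(a-a')b+a'(b-b')$ with $a=\mu^2(f)$, $a'=\mu_z^2(f)$, $b=\mu^2(f+J)$, $b'=\mu_z^2(f+J)$, so that $a-a'=e_z(f)$ and $b-b'=e_z(f+J)$, the summand equals $e_z(f)\mu^2(f+J)+\mu_z^2(f)\,e_z(f+J)$; in the second term I would further substitute $\mu_z^2(f)=\mu^2(f)-e_z(f)$. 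Since $0\le\mu^2\le 1$, the difference is then bounded in absolute value by $\sum_{f\in\EM_n}|e_z(f)|+\sum_{f\in\EM_n}|e_z(f+J)|+\sum_{f\in\EM_n}|e_z(f)|\,|e_z(f+J)|$. Two elementary reductions simplify this: because $\deg J<n$ the translation $f\mapsto f+J$ is a bijection of $\EM_n$, so $\sum_f|e_z(f+J)|=\sum_f|e_z(f)|$; and from $|xy|\le\tfrac12(x^2+y^2)$ together with the same bijection, $\sum_f|e_z(f)|\,|e_z(f+J)|\le\sum_f|e_z(f)|^2$. Hence the proposition reduces to proving $\sum_{f\in\EM_n}|e_z(f)|\ll q^n/Z$ and $\sum_{f\in\EM_n}|e_z(f)|^2\ll q^n/Z$.

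The linear bound is immediate. From $|e_z(f)|\le\#\{d\ \text{monic}:d^2\mid f,\ \deg d\ge z\}$, interchanging summation, and the fact that at most $q^{n-2\deg d}$ monic polynomials of degree $n$ are divisible by $d^2$,
\[
\sum_{f\in\EM_n}|e_z(f)|\le\sum_{\deg d\ge z}\#\{f\in\EM_n:d^2\mid f\}\le\sum_{k\ge z}q^{k}\cdot q^{\,n-2k}=\sum_{k\ge z}q^{\,n-k}\ll q^{\,n-z}=q^n/Z .
\]

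The quadratic bound is the heart of the argument. Expanding the square and counting divisibility classes,
\[
\sum_{f\in\EM_n}|e_z(f)|^2=\sum_{d_1,d_2}\mu(d_1)\mu(d_2)\,\#\{f\in\EM_n:[d_1,d_2]^2\mid f\}\le\sum_{\substack{d_1,d_2\ \text{monic}\\ \deg d_i\ge z}}q^{\,n-2\deg[d_1,d_2]} .
\]
To evaluate the right-hand side I would parametrise by $g=\gcd(d_1,d_2)$, writing $d_1=ga$ and $d_2=gb$; since the $d_i$ are squarefree the triple $g,a,b$ is pairwise coprime, so $[d_1,d_2]=gab$ and the constraints become $\deg g+\deg a\ge z$ and $\deg g+\deg b\ge z$. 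Dropping coprimality, for each monic $g$ of degree $j$ the sums over $\deg a$ and over $\deg b$ are geometric series each bounded by $\ll q^{-\max(0,\,z-j)}$, which yields $\ll q^n\sum_{j\ge 0}q^{-j}q^{-2\max(0,\,z-j)}$; splitting this $j$-sum at $j=z$ makes it $\ll q^{-z}$, so $\sum_{f\in\EM_n}|e_z(f)|^2\ll q^{n-z}=q^n/Z$ with an absolute constant, completing the proof. I expect the organisation of this double sum to be the only genuinely delicate point: the tempting alternative of parametrising by $\ell=[d_1,d_2]$ directly forces in a ternary-divisor weight of size roughly $3^{\omega(\ell)}$ and leaks polynomial-in-$z$ factors, whereas the $(g,a,b)$-parametrisation controls the number of pairs with a given least common multiple cleanly. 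Everything used is elementary counting in $\fq[t]$; no restriction on $q$ is needed beyond those already in force, and no input from the equidistribution theorems.
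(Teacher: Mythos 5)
Your argument is correct and is essentially the paper's own proof: the same decomposition $\mu^2=\mu^2_z+e_z$ with the telescoping/AM--GM reduction to the two bounds $\sum_{f\in\EM_n}|e_z(f)|\ll q^n/Z$ and $\sum_{f\in\EM_n}|e_z(f)|^2\ll q^n/Z$, and the same $\gcd$-parametrisation (with coprimality dropped and geometric series split at $\deg g=z$) that the paper isolates as Lemma~\ref{double sum d}, namely $\sum_{\deg d_1,\deg d_2\ge z}1/|[d_1,d_2]|^2\ll 1/Z$.
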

\begin{proof}
Note that
\begin{multline}
 \mu^2(f)\mu^2(f+J)=\mu^2_z(f)\mu^2_z(f+J) \\
   +
 e_z(f)\mu^2(f+J)-\mu^2(f)e_z(f+J) - e_z(f)e_z(f+J)
\end{multline}
so that (recall $\mu^2(f)\leq 1$)
\begin{multline}\label{ineq for mu2}
%\begin{split}
\Big|\mu^2(f)\mu^2(f+J)-\mu^2_z(f)\mu^2_z(f+J) \Big|  \leq |e_z(f)|
+ |e_z(f+J)| + |e_z(f)e_z(f+J)|
\\
 \leq |e_z(f)| + |e_z(f+J)|  + \frac 12|e_z(f)|^2+ \frac
12|e_z(f+J)|^2
%\end{split}
\end{multline}
 and therefore, summing \eqref{ineq for mu2} over $f\in \EM_n$ and noting
that since $\deg J<n$, sums of $f+J$ are the same as sums of $f$,
\begin{equation}\label{Bd for diff  e1 e2}
%\begin{split}
\Big| S(J;n)-S_z(J;n)\Big|
%&=\Big|  \sum_{f\in \EM_n}e_z(f)\mu^2(f+J) -\mu^2(f)e_z(f+J) - e_z(f)e_z(f+J)\Big|
%\\&
\leq  2\sum_{f\in \EM_n} |e_z(f)| + \sum_{f\in\EM_n}|e_z(f)|^2 \;.
%\end{split}
\end{equation}

We have
$$ |e_z(f)| = |\sum_{\substack{d^2\mid f\\ \deg d\geq z}} \mu(d)|
\leq \sum_{\substack{d^2\mid f\\ \deg d\geq z}}1
$$
so that
\begin{equation}\label{bound e1}
\begin{split}
\sum_{f\in \EM_n} |e_z(f)| &\leq \sum_{f\in
\EM_n}\sum_{\substack{d^2\mid f\\ \deg d\geq z}}1
\\
& = \sum_{z\leq \deg d\leq n/2} \#\{f\in \EM_n:d^2\mid f\}
\\
& =\sum_{z\leq \deg d\leq n/2}\frac{q^n}{|d|^2} \leq \frac{2q^n}{Z}
\;.
\end{split}
\end{equation}

Moreover,
\begin{equation*}
\begin{split}
\sum_{f\in\EM_n}|e_z(f)|^2  &\leq \sum_{f\in\EM_n}
\sum_{\substack{d_1^2\mid f\\ \deg d_1\geq z}}
\sum_{\substack{d_2^2\mid f\\ \deg d_2\geq z}} 1
\\
&\leq \sum_{z\leq \deg d_1, \deg d_2\leq n/2}\#\{f\in \EM_n:
d_1^2\mid f \;{\rm and }\; d_2^2\mid f\} \;.
\end{split}
\end{equation*}
Now the conditions $d_1^2\mid f$ and $d_2^2\mid f$ are equivalent to
$[d_1,d_2]^2\mid f$, where $[d_1,d_2]$ is the least common multiple
of $d_1$ and $d_2$, and this can only happen if $\deg[d_1,d_2]\leq
\deg f/2=n/2$, in which case the number of such $f$ is
$q^n/|[d_1,d_2]|^2$ and is zero otherwise. Thus
\begin{equation}\label{sum e^2}
\begin{split}
\sum_{f\in\EM_n}|e_z(f)|^2  &\leq
 \sum_{\substack{z\leq \deg d_1, \deg d_2\leq n/2\\\deg [d_1,d_2]\leq n/2}}
 \frac{q^n}{|[d_1,d_2]|^2}
 \\
 &\leq q^n \sum_{\deg d_1,\deg d_2\geq z} \frac 1{|[d_1,d_2]|^2}\;.
\end{split}
\end{equation}

We claim that (this is the analogue of \cite[Lemma 2]{Hall})
\begin{lem}\label{double sum d}
\begin{equation*}
\sum_{\deg d_1,\deg d_2\geq z} \frac 1{|[d_1,d_2]|^2} \ll \frac 1{Z}
\end{equation*}
\end{lem}
Inserting Lemma~\ref{double sum d} in \eqref{sum e^2} we will get
\begin{equation}\label{bound e2}
\sum_{f\in\EM_n}|e_z(f)|^2 \ll \frac {q^n}{Z} \;.
\end{equation}
Inserting \eqref{bound e1} and \eqref{bound e2} in \eqref{Bd for
diff  e1 e2} we conclude Proposition~\ref{prop S-Sz}.

To prove Lemma~\ref{double sum d},
%we follow Gerald Tenenbaum's proof of \cite[Lemma 2]{Hall}. For a monic polynomial $m$, set
%$$ f_z(m) = \#\{\deg d_1,\deg d_2\geq z: [d_1,d_2]=m\}$$
use $[d_1,d_2] = d_1d_2 /\gcd(d_1,d_2)$ to rewrite the sum as
\begin{equation*}
\begin{split}
\sum_{\deg d_1,\deg d_2\geq z} \frac 1{|[d_1,d_2]|^2}&= \sum_{\deg
d_1,\deg d_2\geq z} \frac{|\gcd(d_1,d_2)|^2}{|d_1|^2|d_2|^2}
\\
& = \sum_{k\;{\rm monic}} |k|^2 \sum_{\substack{\deg d_1,\deg
d_2\geq z\\ \gcd(d_1,d_2)=k}} \frac{1}{|d_1|^2|d_2|^2} \;.
\end{split}
\end{equation*}

In the sum above, we write $d_j=k\delta_j$ with
$\gcd(\delta_1,\delta_2)=1$. The condition $\deg d_j \geq z$
gives no restriction on $\delta_j$  if $\deg k\geq z$, and otherwise
translates into $\deg \delta_j \geq z-\deg k$. Thus
\begin{equation*}
\begin{split}
\sum_{\deg d_1,\deg d_2\geq z} \frac 1{|[d_1,d_2]|^2}& \ll
 \sum_{k\;{\rm monic}} \frac 1{|k|^2 }\sum_{\substack{\deg
\delta_1,\deg \delta_2\geq z-\deg k\\ \gcd(\delta_1,\delta_2)=1}}
\frac{1}{|\delta_1|^2|\delta_2|^2}
\\
&\leq \sum_{k\;{\rm monic}} \frac 1{|k|^2 }\Big(\sum_{\deg
\delta\geq z-\deg k} \frac 1{|\delta|^2} \Big)^2
\end{split}
\end{equation*}
after ignoring the coprimality condition.
 Therefore
\begin{equation*}
\begin{split}
\sum_{k\;{\rm monic}} \frac 1{|k|^2 }\Big(\sum_{\deg \delta\geq
z-\deg k} \frac 1{|\delta|^2} \Big)^2 &\leq \sum_{\deg k\leq z}
\frac 1{|k|^2}\Big(\sum_{\deg \delta \geq z-\deg k} \frac
1{|\delta|^2} \Big)^2
\\
&+ \sum_{\deg k>z} \frac 1{|k|^2}\Big(\sum_{\delta} \frac
1{|\delta|^2} \Big)^2
\\
&\ll  \sum_{\deg k\leq z} \frac 1{|k|^2}(\frac {|k |}{q^z})^2 +
\frac 1{q^{z+1}}
\\
&\ll \frac 1{Z} \;,
\end{split}
\end{equation*}
which proves Lemma~\ref{double sum d}.
\end{proof}

\subsection{Evaluating $S_z(J;n)$}
\begin{prop}\label{prop Sz}
If $z\leq n/2$ then
$$ S_z(J;n) = q^n \mathfrak S(J)  + O(\frac{q^nz}{Z}) + O(Z^2),$$
with $Z=q^z$.
\end{prop}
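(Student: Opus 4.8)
The plan is to expand $S_z(J;n)$ directly via the definition of $\mu^2_z$, interchange summations, and reduce to counting polynomials in $\EM_n$ divisible by a prescribed square. Writing $\mu^2_z(f) = \sum_{d_1^2 \mid f,\ \deg d_1 < z} \mu(d_1)$ and $\mu^2_z(f+J) = \sum_{d_2^2 \mid f+J,\ \deg d_2 < z} \mu(d_2)$, we obtain
\begin{equation*}
S_z(J;n) = \sum_{\substack{\deg d_1, \deg d_2 < z}} \mu(d_1)\mu(d_2) \cdot \#\{f \in \EM_n : d_1^2 \mid f,\ d_2^2 \mid f+J\}.
\end{equation*}
The inner count is governed by the simultaneous congruences $f \equiv 0 \bmod d_1^2$ and $f \equiv -J \bmod d_2^2$. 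By the Chinese Remainder Theorem applied to the coprime parts, this system is solvable precisely when the ``compatibility condition'' holds on $\gcd(d_1,d_2)^2$, namely $\gcd(d_1,d_2)^2 \mid J$; when solvable, the solutions form a single residue class modulo $\lcm(d_1^2,d_2^2) = \lcm(d_1,d_2)^2$. Since $\deg \lcm(d_1,d_2)^2 \le 2(2z-2) < 2n$ (using $z \le n/2$), if $2\deg\lcm(d_1,d_2) \le n$ the number of monic $f$ of degree $n$ in that class is exactly $q^n / |\lcm(d_1,d_2)|^2$; otherwise there are $O(1)$ such $f$, and the total contribution of those ``large'' pairs is bounded by $O(Z^2)$ (there are at most $Z^2$ pairs, each contributing $O(1)$). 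This accounts for the $O(Z^2)$ error term.

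So up to $O(Z^2)$,
\begin{equation*}
S_z(J;n) = q^n \sum_{\substack{\deg d_1, \deg d_2 < z\\ \gcd(d_1,d_2)^2 \mid J}} \frac{\mu(d_1)\mu(d_2)}{|\lcm(d_1,d_2)|^2}.
\end{equation*}
Next I would complete this truncated double sum to the full sum over all monic $d_1, d_2$ (with the condition $\gcd(d_1,d_2)^2 \mid J$, $d_1, d_2$ squarefree since $\mu$ kills the rest). The tail, where $\deg d_1 \ge z$ or $\deg d_2 \ge z$, is bounded using $|\lcm(d_1,d_2)|^{-2} \le |d_1|^{-1}|d_2|^{-1}$ — or more efficiently by exactly the estimate of Lemma~\ref{double sum d}, $\sum_{\deg d_1, \deg d_2 \ge z} |\lcm(d_1,d_2)|^{-2} \ll 1/Z$ — but here one must be slightly careful to get the claimed $O(q^n z / Z)$ rather than $O(q^n/Z)$: the divisibility restriction $\gcd(d_1,d_2)^2 \mid J$ is harmless (it only shrinks the sum), so the tail is $\ll q^n/Z$, which is even better than needed, and the $z$ factor is just slack. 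Thus
\begin{equation*}
S_z(J;n) = q^n \sum_{\substack{d_1, d_2 \text{ monic}\\ \gcd(d_1,d_2)^2 \mid J}} \frac{\mu(d_1)\mu(d_2)}{|\lcm(d_1,d_2)|^2} + O\!\left(\frac{q^n z}{Z}\right) + O(Z^2).
\end{equation*}

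Finally I would identify the main-term series with $\mathfrak S(J)$. Writing $d_1 = k\delta_1$, $d_2 = k\delta_2$ with $k = \gcd(d_1,d_2)$ and $\gcd(\delta_1,\delta_2)=1$, and using $\lcm(d_1,d_2) = k\delta_1\delta_2$, the sum factors into an Euler product over prime polynomials $P$: the local factor at $P$ collects contributions from $(v_P(d_1), v_P(d_2)) \in \{(0,0),(1,0),(0,1),(1,1)\}$ (higher powers killed by $\mu$), with the constraint that $(1,1)$ is allowed only when $P^2 \mid J$. This yields local factor $1 - 2/|P|^2$ when $P^2 \nmid J$ and $1 - 2/|P|^2 + 1/|P|^2 = 1 - 1/|P|^2$ when $P^2 \mid J$; taking the ratio gives exactly $\prod_P(1 - 2/|P|^2) \cdot \prod_{P^2 \mid J} \frac{|P|^2-1}{|P|^2-2} = \mathfrak S(J)$ as in \eqref{HL S(J)}. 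The main obstacle is purely bookkeeping: handling the compatibility condition $\gcd(d_1,d_2)^2 \mid J$ correctly in the CRT step and making sure it matches the second product in the definition of $\mathfrak S(J)$; none of the steps is deep, but the Euler-factor computation must be done with care to land on the stated constant.
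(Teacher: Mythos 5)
Your overall strategy is exactly the paper's: expand $\mu^2_z$, count $f$ with $d_1^2\mid f$, $d_2^2\mid f+J$ via CRT (solvability iff $\gcd(d_1,d_2)^2\mid J$, a single class mod $[d_1,d_2]^2$), collect an $O(Z^2)$ error from the $O(1)$ per-pair correction, complete the truncated double sum, and identify the completed sum with $\mathfrak S(J)$ by an Euler product; your local-factor computation at each prime $P$ is correct and matches \eqref{HL S(J)}.

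There is, however, a genuine error in your treatment of the completion step. The tail you must bound is over pairs with $\deg d_1\ge z$ \emph{or} $\deg d_2\ge z$, and by symmetry this reduces to bounding $\sum_{\deg d_1\ge z}\sum_{d_2}\,|[d_1,d_2]|^{-2}$ with $d_2$ \emph{unrestricted}. Lemma~\ref{double sum d}, which you invoke, only covers the sub-sum where \emph{both} degrees are $\ge z$, so it does not apply; and your claimed bound $\ll 1/Z$ for the full tail is in fact false. Writing $d_i=k\delta_i$ with $k=\gcd(d_1,d_2)$, the unrestricted $\delta_2$-sum contributes $O(1)$ rather than $O(|k|/q^z)$, and one is left with $\sum_{\deg k\le z}|k|^{-1}q^{-z}\asymp z/Z$ (already the sub-sum with $d_2\mid d_1$ gives $\sum_{\deg d_1\ge z}\tau(d_1)/|d_1|^2\asymp z/Z$, so the $z$ is not slack). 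The correct estimate is precisely the paper's Lemma~\ref{second double sum d}, $\sum_{\deg d_1>z}\sum_{d_2}|[d_1,d_2]|^{-2}\ll z/Z$, and this is exactly the source of the factor $z$ in the error term $O(q^nz/Z)$ of the Proposition. So your final statement is still reached once you replace the misapplied lemma by this estimate, but the step as you wrote it does not stand.
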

\begin{proof}
Using the definition of $\mu^2_z$, we obtain
\begin{equation*}
\begin{split}
S_z(J;n)&:= \sum_{f\in \EM_n} \mu^2_z(f)\mu^2_z(f+J)
\\
& = \sum_{\deg d_1\leq z}\sum_{\deg d_2\leq z} \mu(d_1)\mu(d_2)
\#\{f\in \EM_n: d_1^2\mid f, d_2^2 \mid f+J\} \;.
\end{split}
\end{equation*}
Decomposing into residue classes modulo  $[d_1,d_2]^2$ gives
\begin{multline*}
\#\{f\in \EM_n: d_1^2\mid f, d_2^2 \mid f+J\} \\
= \sum_{\substack{c\bmod [d_1,d_2]^2\\ c=0\bmod d_1^2\\ c=-J\bmod
d_2^2}} \#\{f\in \EM_n: f=c\bmod [d_1,d_2]^2\} \;.
\end{multline*}

%Since $\deg [d_1,d_2]^2\leq \deg(d_1^2d_2^2)\leq 4z\leq n$ by our
%choice of $z$, we have
If $\deg [d_1,d_2]^2\leq n$ then
$$\#\{f\in \EM_n: f=c\bmod [d_1,d_2]^2\} = \frac{q^n}{|[d_1,d_2]|^2}
\;.
$$
Otherwise there is at most {\em one} $f\in \EM_n$ with $f=c\bmod
[d_1,d_2]^2$. So we write
$$\#\{f\in \EM_n: f=c\bmod [d_1,d_2]^2\} = \frac{q^n}{|[d_1,d_2]|^2}
 + O(1)
\;.
$$

 Let $\kappa(d_1,d_2;J)$ be the number of solutions $c\bmod
[d_1,d_2]^2$ of the system of congruences $c=0\bmod d_1^2$,
$c=-J\bmod d_2^2$; it is either $1$ or $0$ depending on whether
$\gcd(d_1,d_2)^2\mid J$ or not. Then we have found that
\begin{equation*}
\begin{split}
S_z(J;n) &= \sum_{\deg d_1\leq z}\sum_{\deg d_2\leq z}
\mu(d_1)\mu(d_2)\kappa(d_1,d_2;J)
\Big( \frac{q^n}{|[d_1,d_2]|^2}  +O(1)\Big) \\
&= q^n\sum_{\deg d_1\leq z}\sum_{\deg d_2\leq z} \mu(d_1)\mu(d_2)
\frac{\kappa(d_1,d_2;J)}{|[d_1,d_2]|^2} + O(Z^2) \;.
\end{split}
\end{equation*}

The double sum can be extended to include all $d_1,d_2$:
\begin{multline*}
\sum_{\deg d_1\leq z}\sum_{\deg d_2\leq z} \mu(d_1)\mu(d_2)
\frac{\kappa(d_1,d_2;J)}{|[d_1,d_2]|^2} =
\sum_{d_1,d_2}\mu(d_1)\mu(d_2)
\frac{\kappa(d_1,d_2;J)}{|[d_1,d_2]|^2} \\+ O\Big(\sum_{\deg d_1>z}
\sum_{d_2} \frac 1{|[d_1,d_2]|^2} \Big) \;,
\end{multline*}
so that
\begin{multline}\label{interim Sz}
S_z(J;n)= q^n\sum_{d_1,d_2}\mu(d_1)\mu(d_2)
\frac{\kappa(d_1,d_2;J)}{|[d_1,d_2]|^2} \\+ O\Big(\sum_{\deg d_1>z}
\sum_{d_2} \frac 1{|[d_1,d_2]|^2} \Big) +O(Z^2) \;.
\end{multline}

We bound the sum in the remainder term of \eqref{interim Sz}  by
(this is the analogue of \cite[Lemma 3]{Hall}):
\begin{lem}\label{second double sum d}
\begin{equation*}
\sum_{\deg d_1>z} \sum_{d_2} \frac 1{|[d_1,d_2]|^2} \ll
\frac{z}{q^z} = \frac zZ \;.
\end{equation*}
\end{lem}
\begin{proof}
We argue as in the proof of Lemma~\ref{double sum d}: We write the
least common multiple as $[d_1,d_2] = d_1d_2/\gcd(d_1,d_2)$ and sum
over all pairs of $d_1,d_2$ with given $\gcd$:
\begin{equation*}
\begin{split}
\sum_{\deg d_1>z} \sum_{d_2} \frac 1{|[d_1,d_2]|^2}& = \sum_k |k|^2
\sum_{\deg d_1>z} \sum_{\substack{d_2\\ \gcd(d_1,d_2)=k}} \frac
1{|d_1|^2|d_2|^2}
\\
& = \sum_k |k|^2 \sum_{\deg \delta_1 >z-\deg k}\frac
1{|k|^2|\delta_1|^2}
\sum_{\substack{\delta_2\\\gcd(\delta_1,\delta_2)=1}} \frac
1{|k|^2|\delta_2|^2} \;,
\end{split}
\end{equation*}
after writing $d_j=k\delta_j$ with $\delta_1$, $\delta_2$ coprime.

Ignoring the coprimality condition gives
\begin{equation*}
\begin{split}
\sum_{\deg d_1>z} \sum_{d_2} \frac 1{|[d_1,d_2]|^2}& \ll \sum_k
\frac 1{|k|^2} \sum_{\deg \delta_1>z-\deg k} \frac
1{|\delta_1|^2}\sum_{\delta_2} \frac 1{|\delta_2|^2}
\\
&\ll \sum_{\deg k \leq z} \frac 1{|k|^2}\sum_{\deg \delta_1>z-\deg
k} \frac 1{|\delta_1|^2}+ \sum_{\deg k>z} \frac
1{|k|^2}\sum_{\delta_1} \frac 1{|\delta_1|^2}
\\
&\ll\sum_{\deg k \leq z} \frac 1{|k|^2} \frac{|k|}{q^z} +\sum_{\deg
k>z} \frac 1{|k|^2}
\\&\ll \frac{z}{q^z} = \frac{z}{Z} \;,
\end{split}
\end{equation*}
which proves Lemma~\ref{second double sum d}.
\end{proof}

Putting together \eqref{interim Sz} and Lemma~\ref{second double sum
d}, we have shown that
\begin{equation}\label{app:infinite sum}
S_z(J;n) = q^n\sum_{  d_1 }\sum_{  d_2 } \mu(d_1)\mu(d_2)
\frac{\kappa(d_1,d_2;J)}{|[d_1,d_2]|^2}  + O(\frac {q^nz}{Z})
+O(Z^2) \;.
\end{equation}

It remains to show that the infinite sum in \eqref{app:infinite sum}
coincides with the singular series $\mathfrak S(J)$:
\begin{lem}
\begin{equation*}
 \sum_{ d_1 }\sum_{  d_2 } \mu(d_1)\mu(d_2)
\frac{\kappa(d_1,d_2;J)}{|[d_1,d_2]|^2} = \mathfrak S(J)
\end{equation*}
\end{lem}
\begin{proof}
This is done exactly as in \cite[Appendix]{Hall}. We write
\begin{equation*}
\sum_{ d_1 }\sum_{  d_2 } \mu(d_1)\mu(d_2)
\frac{\kappa(d_1,d_2;J)}{|[d_1,d_2]|^2}  = \sum_m
\frac{s(m;J)}{|m|^2}
\end{equation*}
where
\begin{equation*}
s(m;J) = \sum_{\substack{ [d_1,d_2]=m\\ \gcd(d_1,d_2)^2\mid J}}
\mu(d_1)\mu(d_2)
\end{equation*}

One checks that $s(m;J)$ is multiplicative in $m$, and that for $P$
prime
$$s(P^\alpha;P^j) = \sum_{\substack{\max(u,v)=\alpha\\
2\min(u,v)\leq j}} \mu(P^u)\mu(P^v)
$$
so that $s(P^\alpha;P^j)=0$ for $\alpha\geq 2$ while for $\alpha=1$
\begin{equation*}
s(P;P^j) =\sum_{\substack{\max(u,v)=1\\ \min(u,v)\leq j/2}}
\mu(P^u)\mu(P^v)
\end{equation*}
If $j<2$ (that is if $P^2\nmid P^j$), then the sum is over
$\max(u,v)=1$ and $\min(u,v)=0$ i.e. $(u,v) = (0,1),(1,0)$ which
works out to $s(P,P^j)=-2$ for $j=0,1$, while for $j\geq 2$ the only
restriction is $\max(u,v)=1$, i.e. $(u,v) = (1,0),(0,1),(1,1)$ which
gives $s(P,P^j)=-1$ for $j\geq 2$. Thus
\begin{equation*}
\begin{split}
\sum_m \frac{s(m;J)}{|m|^2}  &= \prod_P \left(
1+\frac{s(P,J)}{|P|^2}
\right)\\
&= \prod_{P^2\mid J}(1-\frac 1{|P|^2})\prod_{P^2\nmid J} (1-\frac
2{|P|^2})
\end{split}
\end{equation*}
which is exactly $\mathfrak S(J)$.
\end{proof}
We now conclude the proof of Proposition~\ref{prop Sz}: By
Propositions~\ref{prop S-Sz}, \ref{prop Sz} we have shown that for
$z\leq n/2$,
$$
S(J;n) = q^n\mathfrak S(J) + O(\frac{q^nz}{Z})+O(Z^2)
$$
Taking $z\approx n/3$ gives that for all $J\neq 0$ with $\deg J<n$,
$$
S(J;n) = q^n\mathfrak S(J) + O(nq^{2n/3})
$$
as claimed.
\end{proof}

\subsection{Computing the variance}

As described in \S~\ref{sec:backgd on si}, we have a partition of the set $\EM_n$ of monic polynomials of degree $n$ as
$$ \EM_n = \coprod_{A\in \mathcal A}I(A;h)$$
where
$$
 \mathcal A = \{A=t^n+a_{n-1}t^{n_1}+\dots +a_{h+1}t^{h+1}: a_j\in
 \fq\}\;.
$$

The mean value of $\EN$ is, for $n\geq 2$,
$$
\ave{\EN} = \frac 1{\#\mathcal A}\sum_{A\in \mathcal A} \EN(A) =
\frac{q^{h+1}}{\zeta(2)}, \quad n\geq 2\;.
$$
The variance is
\begin{equation}
\Var{\EN} = \ave{\EN^2}-\ave{\EN}^2 \;.
\end{equation}
We have
\begin{equation*}
\begin{split}
\ave{\EN^2} &= \frac 1{\#\mathcal A}\sum_{A\in \mathcal A} \sum_{|f-A|\leq q^h} \sum_{|g-A|\leq q^h} \mu^2(f)\mu^2(g) \\
& =  \frac 1{\#\mathcal A}\sum_{f\in \EM_n} \mu^2(f) +
 \frac 1{\#\mathcal A} \sum_{\substack{f\neq g\\ |f-g|\leq q^h}} \mu^2(f)\mu^2(g)\\
&=\ave{\EN} + q^{h+1}\sum_{0\neq J\in \mathcal P_{\leq h} } \frac
1{q^n}\sum_{f\in \EM_n} \mu^2(f)\mu^2(f+J)\;.
\end{split}
\end{equation*}

We use Theorem~\ref{thm hl} %the Hardy-Littlewood heuristic
\begin{equation}
 \sum_{f\in \EM_n} \mu^2(f)\mu^2(f+J) = q^n\mathfrak S(J)  +
 O\Big(nq^{2n/3}\Big)
 \end{equation}
 where
 \begin{equation}\label{HL heuristic J}
\mathfrak S(J) = \prod_P(1-\frac 2{|P|^2}) \cdot \prod_{P^2\mid
J}\frac{|P|^2-1}{|P|^2-2}
 = \alpha \mathfrak s(J)
% \alpha \cdot \prod_{P^2\mid J}\frac{|P|^2-1}{|P|^2-2}
\end{equation}
with
\begin{equation}
\alpha  =\prod_P(1-\frac 2{|P|^2})
\end{equation}
and
$$
%\mathfrak S(J) = \alpha \mathfrak s(J), \quad
\mathfrak s(J) = \prod_{P^2\mid J} \frac{|P|^2-1}{|P|^2-2}\;.
$$
This gives that
\begin{equation}\label{var nHL}
\begin{split}
\Var & = \ave{\EN} - \ave{\EN}^2 + \alpha  q^{h+1} \sum_{0\neq J\in \mathcal P_{\leq h}} \mathfrak s(J)  + O(H^2nq^{-n/3})\\
& =\frac{q^{h+1}}{\zeta(2)} - ( \frac{q^{h+1}}{\zeta(2)})^2 + \alpha
q^{h+1} (q-1) \sum_{j=0}^{h} \sum_{J\in \EM_j} \mathfrak s(J)+
O(H^2nq^{-n/3})\;,
\end{split}
\end{equation}
the last step using  homogeneity: $\mathfrak S(cJ) = \mathfrak
S(J)$, $c\in \fq^\times$.

\subsection{Computing $\sum_J \mathfrak s(J)$}
To evaluate the sum of $\mathfrak s(J)$ in \eqref{var nHL}, we form
the generating series
$$
F(u) = \sum_{J \;{\rm monic}} \mathfrak s(J)u^{\deg J}\;.
$$
Since $\mathfrak s(J)$ is multiplicative,  and $\mathfrak s(P^k) =
1$ if $k=0,1$, and $\mathfrak s(P^k) = \mathfrak s(P^2) =
\frac{|P|^2-1}{|P|^2-2}$ if $k\geq 2$, we find
\begin{equation*}
\begin{split}
F(u) &= \prod_P (1 + u^{\deg P} + \mathfrak s(P^2) \sum_{k\geq 2} u^{k\deg P})\\
&= \prod_P (1+u^{\deg P} +  \frac{|P|^2-1}{|P|^2-2} \frac{u^{2\deg P}}{1-u^{\deg P}})\\
& = Z(u)\prod_P (1 + \frac 1{|P|^2-2} u^{2\deg P})
\end{split}
\end{equation*}
with
$$ Z(u) = \prod_P(1-u^{\deg P})^{-1} = \frac 1{1-qu}\;.
$$
We further factor
$$
\prod_P (1 + \frac 1{|P|^2-2} u^{2\deg P}) = Z(u^2/q^2) \prod_P
(1+\frac{2u^{2\deg P}-u^{4\deg P}}{|P|^2(|P|^2-2)})\;,
$$
with the product  absolutely convergent for $|u|<q^{3/4}$.

We have
\begin{equation}
\begin{split}
\sum_{j=0}^h \sum_{J\in \EM_j} \mathfrak s(J) &= \frac 1{2\pi i}
\oint F(u) \frac{1-u^{-(h+1)}}{u-1}du\\
&=\frac 1{2\pi i} \oint F(u) \frac{1}{u-1}du + \frac 1{2\pi i} \oint
F(u) \frac{u^{-(h+1)}}{1-u}du
\end{split}
\end{equation}
where the contour of integration is a small circle around the origin
not including any pole of $F (u)$, say $|u|=1/q^2$, traversed
counter-clockwise.

The first integral is zero, because the integrand is analytic near
$u=0$. As for the second integral, we shift the contour of
integration to $|u|=q^{3/4-\delta}$, and obtain
 $$
\frac 1{2\pi i} \oint F(u) \frac{ u^{-(h+1)}}{1-u}du= -\Res_{u=1/q}
- \Res_{u=1} -\Res_{u=\pm \sqrt{q}} +\frac 1{2\pi i}
\oint_{|u|=q^{3/4-\delta}} F(u) \frac{ u^{-(h+1)}}{1-u}du
$$

As $h\to \infty$, we may bound the integral around
$|u|=q^{3/4-\delta}$ by
$$
\frac 1{2\pi i} \oint_{|u|=q^{3/4-\delta}} F(u) \frac{
u^{-(h+1)}}{1-u}du \ll_q
 q^{-(3/4-\delta)(h+1)} \;,
%{\rm small}
$$
 the implied constant depending on $q$.

 The residue at $u=1/q$ gives
$$
 -\Res_{u=1/q} = \frac 1{\alpha\zeta(2)^2}  \frac{q^{h+1} }{(q-1)  }
$$
and hence its contribution to $\Var \EN$ is
\begin{equation}
\left( \frac{q^{h+1}}{\zeta(2)} \right)^2
 \end{equation}
which exactly cancels out the term  $ -\ave{\EN}^2$ in \eqref{var
nHL}.

%\fbox{This is different than the case of arithmetic progressions!}

%So we are left with the contribution of the residue at $u=\pm
%\sqrt{q}$:
%$$
%\Var = -\alpha q^{h+1}(q-1) \left\{
%\Res_{u=+\sqrt{q}}+\Res_{u=-\sqrt{q}} \right\}
%$$

The residue at $u=1$ gives
\begin{equation}
-\Res_{u=1} \frac{F(u)u^{-(h+1)}}{1-u} = F(1)  =\frac 1{1-q}
\prod_P(1+\frac 1{|P|^2-2}) = -\frac 1{(q-1)\alpha\zeta_q(2)}
\end{equation}
and its contribution to $\Var \EN$ is
\begin{equation}
-\frac{q^{h+1}}{\zeta_q(2)}
\end{equation}
which exactly cancels out the term  $ \ave{\EN }$ in \eqref{var
nHL}.

 The residue at $u=+ \sqrt{q}$ gives
 \begin{equation}
 -   \Res_{u=+\sqrt{q}} = \frac{\beta_q}{2\alpha}\frac{q^{-\frac h2-2}}{(1-\frac
 1{q^{3/2}})(1-\frac 1{q^{1/2}})}
 \end{equation}
 and the residue at $u=- \sqrt{q}$ gives
\begin{equation*}
 -   \Res_{u=-\sqrt{q}} = \frac{\beta_q}{2\alpha}(-1)^h\frac{q^{-\frac h2-2}}{(1+\frac
 1{q^{3/2}})(1+\frac 1{q^{  1/2}})}
 \end{equation*}
with $\beta_q = \prod_P(1-\frac 3{|P|^2}+\frac 2{|P|^3})$. Hence
\begin{multline*}
 -   \Res_{u=+\sqrt{q}}-\Res_{u=-\sqrt{q}} =
 %-\frac \beta \alpha
% \frac 1q \frac {1+\frac 1q}{(1-\frac 1{q^3})(q-1)}
 % \\
 + \frac {\beta_q}{\alpha} q^{-\frac h2-1}
 \frac{ (1+\frac 1{q^2}) \frac {1+(-1)^h}2  + \frac 1{q^{1/2}}
 (1+\frac 1q) \frac {1-(-1)^h}2 }{(1-\frac
 1{q^3})(q-1)}\;.
\end{multline*}
Therefore we find
%, assuming the Hardy-Littlewood heuristics are  valid, that
\begin{equation*}
%\begin{split}
\Var \EN =
 \frac{\beta_q}{1-\frac 1{q^3}} q^{(h+1)/2}
\begin{cases}
\frac {1+\frac 1{q^2}}{\sqrt{q}},& h\;{\rm even}\\    \\
\frac{1+\frac 1q}{q},&h\;{\rm odd}
\end{cases} + O(H^2nq^{-n/3}) +O_q(H^{1/4+\delta})\;.
%\\
%& = q^h (\frac 1{\zeta(2)} -\beta  \frac{ 1+\frac 1q }{1-\frac
%1{q^3}}) + \frac{\beta}{1-\frac 1{q^3}} q^{(h+1)/2}
%\begin{cases}
%\frac {1+\frac 1{q^2}}{\sqrt{q}},& h\;{\rm even}\\    \\
%\frac{1+\frac 1q}{q},&h\;{\rm odd}
%\end{cases}
%\end{split}
\end{equation*}
%Hence if $h\leq (\frac 29-\delta)n$, that is $H\ll q^{(\frac
%29-\delta)n}$, then we get an asymptotic result.
This concludes the proof of  Theorem~\ref{Thm Hall}.

 %\newpage

 \end{document}